\newenvironment{enumalph}{\begin{enumerate}  }{\end{enumerate}}
\newtheorem{theorem}{Theorem}[subsection]
\newtheorem{proposition}[theorem]{Proposition}
\newtheorem{conjecture}[theorem]{Conjecture}
\newtheorem{corollary}[theorem]{Corollary}
\newtheorem{lemma}[theorem]{Lemma}
\theoremstyle{definition}
\newtheorem{definition}[theorem]{Definition}
\newtheorem{example}[theorem]{Example}
\newtheorem{remark}[theorem]{Remark}
\DeclareMathOperator{\ch}{ch}
\DeclareMathOperator{\codim}{codim}
\DeclareMathOperator{\opH}{H}
\DeclareMathOperator{\ind}{ind}
\DeclareMathOperator{\Lie}{Lie}
\DeclareMathOperator{\Tr}{Tr}
\newcommand{\calO}{\mathcal{O}}
\newcommand{\A}{\mathbb{A}}
\newcommand{\calF}{\mathcal{F}}
\newcommand{\spec}{\mbox{{\rm{Spec}}\;}}
\newcommand{\red}{\rm{red}}
\newcommand{\reg}{\rm{reg}}
\newcommand{\subreg}{\rm{subreg}}
\newcommand{\sub}{\rm{sub}}
\newcommand{\calL}{\mathcal{L}}
\newcommand{\frakb}{\mathfrak{b}}
\newcommand{\g}{\mathfrak{g}}
\newcommand{\fraku}{\mathfrak{u}}
\newcommand{\fraksl}{\mathfrak{sl}}
\newcommand{\N}{\mathcal{N}}
\newcommand{\Fp}{\mathbb{F}_p}
\begin{document}

\title[Commuting varieties of $r$-tuples over Lie algebras]{Commuting varieties of $r$-tuples over \\ Lie algebras}

\author{Nham V. Ngo}
\address{Department of Mathematics, Statistics, and Computer Science\\ University of Wisconsin-Stout \\ Menomonie\\ WI~54751, USA}

\address{{\bf Current address:} Department of Mathematics and Statistics \\ Lancaster University \\ Lancaster \\ LA1 4YW, UK}
\email{n.ngo@lancaster.ac.uk}

%\date{\today}

\maketitle

\begin{abstract}
Let $G$ be a simple algebraic group defined over an algebraically closed field $k$ of characteristic $p$ and let $\g$ be the Lie algebra of $G$. It is well known that for $p$ large enough the spectrum of the cohomology ring for the $r$-th Frobenius kernel of $G$ is homeomorphic to the commuting variety of $r$-tuples of elements in the nilpotent cone of $\g$ [Suslin-Friedlander-Bendel,  J. Amer. Math. Soc, \textbf{10} (1997), 693--728]. In this paper, we study both geometric and algebraic properties including irreducibility, singularity, normality and Cohen-Macaulayness of the commuting varieties $C_r(\mathfrak{gl}_2), C_r(\fraksl_2)$ and $C_r(\N)$ where $\N$ is the nilpotent cone of $\fraksl_2$. Our calculations lead us to state a conjecture on Cohen-Macaulayness for commuting varieties of $r$-tuples. Furthermore, in the case when $\g=\fraksl_2$, we obtain interesting results about commuting varieties when adding more restrictions into each tuple. In the case of $\fraksl_3$, we are able to verify the aforementioned properties for $C_r(\fraku)$. Finally, applying our calculations on the commuting variety $C_r(\overline{\calO_{\sub}})$ where $\overline{\calO_{\sub}}$ is the closure of the subregular orbit in $\fraksl_3$, we prove that the nilpotent commuting variety $C_r(\N)$ has singularities of codimension $\ge 2$.     
\end{abstract}

\section{Introduction}\label{introduction}
\subsection{} Let $k$ be an algebraically closed field of characteristic $p$ (possibly $p=0$). For a Lie algebra $\g$ over $k$ and a closed subvariety $V$ of $\g$, the commuting variety of $r$-tuples over $V$ is defined as the collection of all $r$-tuples of pairwise commuting elements in $V$. In particular, for each positive integer $r$, we define  
\[ C_r(V)=\{(v_1,\ldots,v_r)\in V^r~|~[v_i,v_j]=0~\text{for all}~1\le i\le j\le r\}.\] 
When $r=2$, we call $C_r(V)$ an ordinary commuting variety (cf. \cite{Vas:1994}). Let $\N$ be the nilpotent cone of $\g$. We then call $C_r(\N)$ the nilpotent commuting variety of $r$-tuples. For simplicity, from now on we will call $C_r(V)$ a commuting variety over $V$ whenever $r>2$ in order to distinguish it from ordinary commuting varieties.

The variety $C_2(\g)$ over a reductive Lie algebra $\g$ was first proved to be irreducible in characteristic $0$ by Richardson in 1979 \cite{R:1979}. In positive characteristics, Levy showed that $C_2(\g)$ is irreducible under certain mild assumptions on $G$ \cite{Le:2002}. In 2003, Premet completely determined irreducible components of the nilpotent commuting variety $C_2(\N)$ for an arbitrary reductive Lie algebra in characteristic $0$ or $p$, a good prime for the root system of $G$. In particular, he showed that the nilpotent commuting variety $C_2(\N)$ is equal to the union of irreducible components $\mathcal{C}(e_i)=\overline{G\cdot(e_i,\mbox{Lie}(Z_{[G,G]}(e_i)))}$, where the $e_i$'s are representatives for distinguished nilpotent orbits in $\g$ (which contain elements whose centralizers are in the nilpotent cone).

Up to now, the properties of being Cohen-Macaulay and normal for ordinary commuting varieties are in general not verified \cite{K:2005}. There is a long-standing conjecture stating that the commuting variety $C_2(\g)$ is always normal (see \cite{Po:2008} and \cite{Pr:2003}). Artin and Hochster claimed that $C_2(\mathfrak{gl}_n)$ is a Cohen-Macaulay integral domain (cf. \cite{K:2005},\cite{MS:2011}). This is verified up to $n=4$ by the computer program Macaulay \cite{H:2006}. There is not much hope for verifying Cohen-Macaulayness of nilpotent commuting varieties since their defining ideals are not radical, thus creating great difficulties for computer calculations. As pointed out by Premet, all components of $C_2(\N)$ share the origin $0$, so if it is reducible then it can never be normal. 

Not much is known about commuting varieties. In case $V=\mathfrak{gl}_n$, commuting varieties were studied by Gerstenhaber, Guralnick-Sethuraman, and Kirillov-Neretin. Gerstenhaber proved that $C_r(\mathfrak{gl}_n)$ is reducible for all $n\ge 4$ and $r\ge 5$ \cite{G:1961}. In 1987, Kirillov and Neretin lowered the bound on $r$ by simply showing that $C_4(\mathfrak{gl}_4)$ is reducible. Moreover, they proved that when $n=2$ or $3$ the commuting variety $C_r(\mathfrak{gl}_n)$ is irreducible for all $r\ge 1$ \cite{KN:1987}. In 2000, Guralnick and Sethuraman studied the case when $r=3$ and concluded that $C_3(\mathfrak{gl}_n)$ can be either irreducible (for $n\le 10$) or reducible (for $n\ge 30$) \cite{GS:2000}, \cite{S:2012}. In general, the study of $C_r(V)$ for arbitrary $r$ remains mysterious even in simple cases.

\subsection{Main results}\label{main results} The results in the present paper were motivated by investigating the cohomology for Frobenius kernels of algebraic groups. To be more precise, let $G$ be an algebraic group defined over $k$, and let $G_r$ be the $r$-th Frobenius kernel of $G$. Then there is a homeomorphism between the maximal ideal spectrum of the cohomology ring for $G_r$ and the nilpotent commuting variety over the Lie algebra Lie$(G)$ whenever the characteristic $p$ is large enough \cite{BFS1:1997, BFS2:1997}. In this paper, we tackle the irreducibility, normality and Cohen-Macaulayness of this variety for simple cases. The main results are summarized in the following.
\begin{theorem}
Suppose $p>2$. Then for each $r\ge 1$, the commuting varieties $C_r(\mathfrak{gl}_2), C_r(\fraksl_2)$ and $C_r(\N)$ are irreducible, normal and Cohen-Macaulay.
\end{theorem}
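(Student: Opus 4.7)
The plan is to realize each of the three varieties as an affine cone over a familiar rational homogeneous space, and then invoke classical results. Assuming $p > 2$, write $X_i = a_i e + b_i h + c_i f$; a direct commutator computation gives
\[
[X_i, X_j] \;=\; 2(a_j b_i - a_i b_j)\,e \;+\; (a_i c_j - a_j c_i)\,h \;+\; 2(c_i b_j - b_i c_j)\,f,
\]
so $[X_i, X_j] = 0$ iff the three $2 \times 2$ minors of $\bigl(\begin{smallmatrix} a_i & b_i & c_i \\ a_j & b_j & c_j \end{smallmatrix}\bigr)$ vanish (the factors of $2$ require $p > 2$). Hence $C_r(\fraksl_2)$ is, as a reduced closed subvariety of $\fraksl_2^r$, the classical variety of $r \times 3$ matrices of rank $\le 1$ --- equivalently, the affine cone over the Segre embedding $\mathbb{P}^{r-1} \times \mathbb{P}^2 \hookrightarrow \mathbb{P}^{3r-1}$. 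Standard results (Hochster--Eagon on generic determinantal ideals, or directly Hochster's theorem applied to the presentation as invariants of $k^*$ acting on a polynomial ring with weights $(1,\ldots,1;-1,-1,-1)$) then deliver irreducibility, normality, and Cohen--Macaulayness in one stroke.

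The case $C_r(\mathfrak{gl}_2)$ is immediate: since $\mathfrak{gl}_2 = \fraksl_2 \oplus k\cdot I$ for $p > 2$ with $I$ central, commutators depend only on traceless parts, so $C_r(\mathfrak{gl}_2) \cong C_r(\fraksl_2) \times \A^r$, and the three properties descend through products with affine space.

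For $C_r(\N)$, I would use the standard double cover $\pi: \A^2 \to \N$, $\pi(u,v) = \bigl(\begin{smallmatrix} uv & -u^2 \\ v^2 & -uv \end{smallmatrix}\bigr)$, realizing $\N = \A^2/\mu_2$. Form the parametrization
\[
\Phi: \A^r \times \A^2 \longrightarrow C_r(\N), \qquad (\lambda_1, \ldots, \lambda_r; u, v) \longmapsto \bigl(\lambda_1 \pi(u,v), \ldots, \lambda_r \pi(u,v)\bigr),
\]
which is surjective by the rank-$1$ analysis above (any commuting tuple of nilpotents lies on a single line through $\N$); irreducibility of $C_r(\N)$ follows at once. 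The pullback $\Phi^*$ sends the standard coordinates on $\fraksl_2^r$ to the monomials $\{-\lambda_i u^2,\; \lambda_i uv,\; \lambda_i v^2 : 1 \le i \le r\}$, and a short induction on total degree identifies the subring they generate with the full ring of $k^*$-invariants on $k[\lambda_1, \ldots, \lambda_r, u, v]$ for the weighting $(2,\ldots,2;-1,-1)$. Hence $k[C_r(\N)]$ is a torus-invariant subring of a polynomial ring --- a normal affine semigroup ring --- which is automatically normal and, by Hochster's theorem, Cohen--Macaulay.

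The main obstacle I anticipate is in the nilpotent case: the naive ideal generated by the pairwise commutators together with the quadratic nilpotency relations $b_i^2 + a_i c_i = 0$ is not obviously radical, so one must work with the reduced structure on $C_r(\N)$ and match it to the invariant ring via $\Phi^*$. The monomial-factorization step in that identification is the only slightly delicate point; once done, the classical toric/determinantal machinery handles the rest.
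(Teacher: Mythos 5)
Your treatment of $C_r(\fraksl_2)$ (ideal of $2\times 2$ minors of a generic $3\times r$ matrix, hence a determinantal ring) and of $C_r(\mathfrak{gl}_2)$ (split off the central $kI$, reduce to $\fraksl_2 \times \A^r$) matches the paper's Theorem~\ref{isomorphism of comm vars}, Proposition~\ref{normal-of-C_r(sl)}, and Corollary~\ref{normality of C_r(M_2)} essentially verbatim.

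For $C_r(\N)$, however, you take a genuinely different route, and it works. The paper first gets irreducibility from the surjective moment morphism $m\colon G\times^B\fraku^r\to C_r(\N)$, and then obtains Cohen--Macaulayness by a fairly heavy induction: it slices $C_r(\N)$ with the hypersurface $y_1+z_1=0$, decomposes the slice into three pieces, applies the linkage lemma (Lemma~\ref{Eisenbud}), and --- the real work --- proves that the relevant ideals $I_r + \sum\langle y_i+z_i\rangle$ are radical by exhibiting a principal radical system in the sense of Hochster (Theorem~\ref{Hochster}, Proposition~\ref{radical system}, and the Appendix). Your proposal instead observes that every commuting $r$-tuple of $2\times2$ nilpotents lies on a single line through the cone (the centralizer in $\fraksl_2$ of a nonzero nilpotent, intersected with $\N$, is that line), so the map $\Phi\colon\A^r\times\A^2\to C_r(\N)$, $(\lambda;u,v)\mapsto(\lambda_1\pi(u,v),\ldots,\lambda_r\pi(u,v))$ is surjective; since $\A^{r+2}$ is a domain, $\ker\Phi^*$ is prime, so $k[C_r(\N)]\cong\operatorname{im}\Phi^*$ is reduced automatically and equals the monomial subring generated by $\lambda_iu^2,\lambda_iuv,\lambda_iv^2$. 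Checking that this is the full $k^*$-invariant ring for weights $(2,\ldots,2;-1,-1)$ is a short exercise in splitting exponent vectors, the resulting semigroup is saturated (a power of a monomial is invariant iff the monomial is), and Hochster's theorem on normal affine semigroup rings then gives normality and Cohen--Macaulayness at once. This is cleaner and avoids both the moment-map machinery and the radical-system verification. What the paper's route buys that yours does not directly give is the rational-singularities statement (Proposition~\ref{rational resolution}) and the $G$-equivariant/character-series consequences (Theorem~\ref{coordinate-alg}), since those hinge on the Springer-type resolution $G\times^B\fraku^r$ rather than on the toric presentation. It would be worth noting explicitly that normality and Cohen--Macaulayness in your argument do not need $p>2$ at all --- only the $\fraksl_2$ determinantal step uses the $2$'s; the double-cover parametrization and the invariant-theory argument are characteristic-free.
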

For readers' convenience, we sketch the structure of the paper as follows. We first introduce notation, terminology and conventions in Sections \ref{notation} and \ref{background}. We also prove several properties related to the map $m:G\times^BC_r(\fraku)\to C_r(\N)$ where $\fraku$ is the Lie algebra of the unipotent radical of a fixed Borel subgroup $B$ of $G$, and $\N$ is the nilpotent cone of $\g$. For instance, we show that the map is surjective and satisfies the hypotheses of Zariski's Main Theorem. These results are analogous to those for the moment map from $G\times^B\fraku$ to $\N$. In the next section, we first show a connection between $C_r(\mathfrak{gl}_n)$ and $C_r(\fraksl_n)$ for arbitrary $n, r\ge 1$. This link reduces all of the works for $C_r(\mathfrak{gl}_n)$ to that for $C_r(\fraksl_n)$. Then we consider the case $n=2$ and prove the properties of being irreducible, normal, and Cohen-Macaulay for both varieties by exploiting a fact of determinantal rings. In addition, the analogs are shown for the nilpotent commuting variety over rank 2 matrices in Section \ref{nilpotent commuting variety}. It should be noticed that the nilpotency condition makes the defining ideal of this variety non-radical, thus creating more difficulties in our task. In the case when $\g=\fraksl_2$, to obtain the Cohen-Macaulayness of $C_r(\N)$, we first analyze the geometry by intersecting $C_r(\N)$ with a hypersurface and reduce the problem to showing that a certain class of ideals are radical. Then we prove that this family of ideals belongs to a principal radical system, a deep concept in commutative algebra introduced by Hochster \cite{BV:1988}. As a consequence, we show that the moment map $m:G\times^B\fraku^r\rightarrow C_r(\N)$ admits rational singularities (cf. Proposition \ref{rational resolution}). 

As an application we can compute the characters of the coordinate algebra for this variety (cf. Theorem \ref{characters of coordinate alg of comm var for sl_2}). Combining this with an explicit calculation for the reduced $G_r$-cohomology ring in \cite{N:2012}, we obtain an alternative proof for the main result in \cite{BFS1:1997} for the case when $G=SL_2$.

In Section \ref{mixed cases}, we study commuting varieties with additional restrictions on each tuple. In particular, let $V_1,\ldots, V_r$ be closed subvarieties of a Lie algebra $\g$. Define
\[ C(V_1,\ldots, V_r)=\{(v_1,\ldots, v_r)\in V_1\times\cdots\times V_r~|~[v_i,v_j]=0~,~1\le i\le j\le r \}, \]   
a mixed commuting variety over $V_1,\ldots,V_r$. In the case where $\g=\fraksl_2$, we can explicitly describe the irreducible decomposition for any mixed commuting variety over $\g$ and $\N$. This shows that such varieties are mostly not Cohen-Macaulay or normal. 
  
Section \ref{comvar of 3 by 3} involves results about the geometric structure of nilpotent commuting varieties over various sets of 3 by 3 matrices. In particular, let $\N$ be the nilpotent cone of $\fraksl_3$. We first study the variety $C_r(\fraku)$ and then obtain the irreducibility and dimension of $C_r(\N)$. Next we apply our calculations on $C_r(\overline{\calO_{\sub}})$ to classify singularities of $C_r(\N)$ and show that they are in codimension greater than or equal to 2 (cf. Theorem \ref{codim of sing}), here $\overline{\calO_{\sub}}$ is the closure of the subregular orbit in $\N$. This result indicates that the variety $C_r(\N)$ satisfies the necessary condition $(R1)$ of Serre's criterion for normality. 

\section{Notation}\label{notation}

\subsection{Root systems and combinatorics}\label{combinatorics} Let $k$ be an algebraically closed field of characteristic $p$. Let $G$ be a simple, simply-connected algebraic group over $k$, defined and split over the prime field $\Fp$. Fix a maximal torus $T \subset G$, also split over $\Fp$, and let $\Phi$ be the root system of $T$ in $G$. Fix a set $\Pi = \{ \alpha_1,\ldots,\alpha_n \}$ of simple roots in $\Phi$, and let $\Phi^+$ be the corresponding set of positive roots. Let $B \subseteq G$ be the Borel subgroup of $G$ containing $T$ and corresponding to the set of negative roots $\Phi^-$, and let $U \subseteq B$ be the unipotent radical of $B$. Write $U^+ \subseteq B^+$ for the opposite subgroups. Set $\g = \Lie(G)$, the Lie algebra of $G$, $\frakb = \Lie(B)$, $\fraku = \Lie(U)$.

\subsection{Nilpotent orbits} We will follow the same conventions as in \cite{Hum:1995} and \cite{Jan:2004}. Given a $G$-variety $V$ and a point $v$ of $V$, we denote by $\calO_v$ the $G$-orbit of $v$ (i.e., $\calO_v=G\cdot v$). For example, consider the nilpotent cone $\N$ of $\g$ as a $G$-variety with the adjoint action. There are well-known orbits: $\calO_{\reg}=G\cdot v_{\reg}, \calO_{\subreg}=G\cdot v_{\subreg}$, (we abbreviate it by $\calO_{\sub}$,) and $\calO_{\rm{min}}=G\cdot v_{\rm{min}}$ where $v_{\reg}, v_{\subreg},$ and $v_{\rm{min}}$ are representatives for the regular, subregular, and minimal orbits. Denote by $z(v)$ and $Z(v)$ respectively the centralizers of $v$ in $\g$ and $G$. It is well-known that $\dim z(v)=\dim Z(v)$ and $\dim\calO_v=\dim G-\dim z(v)$. For convenience, we write $z_{\reg}$ ($z_{\sub}$ and $z_{\min}$) for the centralizers of $v_{\reg}$ ($v_{\sub}$ or $v_{\min}$). It is also useful to keep in mind that every orbit is a smooth variety. Sometimes, we use $\calO_V$ for the structure sheaf of a variety $V$ (see \cite{CM:1993}, \cite{Hum:1995} for more details).

\subsection{Basic algebraic geometry conventions}\label{algebraic geometry conventions} Let $R$ be a commutative Noetherian ring with identity. We use $R_{\red}$ to denote the reduced ring $R/\sqrt{0}$ where $\sqrt{0}$ is the radical ideal of the trivial ideal $0$, which consists of all nilpotent elements of $R$. Let $\spec R$ be the spectrum of all prime ideals of $R$. If $V$ is a closed subvariety of an affine space $\A^n$, we denote by $I(V)$ the radical ideal of $k[\A^n]=k[x_1,\ldots,x_n]$ associated to this variety.

Given a $G$-variety $V$, $B$ act freely on $G\times V$ by setting $b\cdot(g,v)=(gb^{-1},bv)$ for all $b\in B,g\in G$ and $v\in V$. The notation $G\times^BV$ stands for the fiber bundle associated with the projection $\pi:G\times^BV\rightarrow G/B$ with fiber $V$. Topologically, $G\times^BV$ is the quotient space of $G\times V$ in which the equivalence relation is given as 
\[ (g,v)\sim (g',v')\Leftrightarrow (g',v')=b\cdot(g,v)~~~\text{for some}~~b\in B.\]
In other words, each equivalence class of $G\times^BV$ represents a $B$-orbit in $G\times V$. The map $m:G\times^BV\rightarrow G\cdot V$ defined by mapping each equivalence class $[g,v]$ to the element $g\cdot v$ for all $g\in G,v\in V$ is called the moment morphism. It is obviously surjective. Let $X$ be an affine variety. Then we always write $k[X]$ for the coordinate ring of $X$ which is the same as the ring of global sections $\calO_X(X)$. Although $G\times^BV$ is not affine, we still denote $k[G\times^BV]$ for its ring of global sections. It is sometimes useful to make the identification $k[G\times^BV]\cong k[G\times V]^B$.

Let $f:X\rightarrow Y$ be a morphism of varieties. Denote by $f_*$ the direct image functor from the category of sheaves over $X$ to the category of sheaves over $Y$. One can see that this is a left exact functor. Hence, we have the right derived functors of this direct image. We call these functors higher direct images and denote them by $R^if_*$ with $i> 0$. In particular, if $Y=\spec A$ is affine and $\calF$ is a quasi-coherent sheaf on $X$, then we have $R^if_*(\calF)\cong\calL(\opH^i(X,\calF))$ where $\calL$ is the exact functor mapping an $A$-module $M$ to its associated sheaf $\calL(M)$. Here we follow conventions in \cite{H:1977}.    

\section{Commutative algebra and Geometry}\label{background}

In this section we introduce concepts in commutative algebra and geometry that play important roles in the later sections. In particular, we recall the definition of Cohen-Macaulay varieties and their properties. We also show that the moment map $G\times^BC_r(\fraku)\to C_r(\N)$ is always surjective for arbitrary type of $G$, and that it is a proper birational morphism. We then review a number of well-known results in algebraic geometry. 

\subsection{Cohen-Macaulay Rings} We first define regular sequences, which are the key ingredient in the definition of Cohen-Macaulay rings. Readers can refer to \cite{E:1995} and \cite{H:1977} for more details.

\begin{definition}
Let $R$ be a commutative ring and let $M$ be an $R$-module. A sequence $x_1,\ldots,x_n\in R$ is called a regular sequence on $M$ (or an $M$-sequence) if it satisfies\\
(1) $(x_1,\ldots,x_n)M\ne M$, and \\
(2) for each $1\le i\le n$, $x_i$ is not a zero-divisor of $M/(x_1,\ldots,x_{i-1})M$. 

\end{definition}

Consider $R$ as a left $R$-module. For a given ideal $I$ of $R$, it is well-known that the length of any maximal regular sequence in $I$ is unique. It is called the {\it depth} of $I$ and denoted by $\text{depth}(I)$. The {\it height} or {\it codimension} of a prime ideal $J$ of $R$ is the supremum of the lengths of chains of prime ideals descending from $J$. Equivalently, it is defined as the Krull dimension of $R/J$. In particular, if $R$ is an integral domain that is finitely generated over a field, then $\codim(J)=\dim R-\dim(R/J)$. We are now ready to define a Cohen-Macaulay ring. 

\begin{definition}
A ring $R$ is called Cohen-Macaulay if $\text{depth}(I)=\codim(I)$ for each maximal ideal $I$ of $R$. A variety $V$ is called Cohen-Macaulay if its coordinate ring $k[V]$ is a Cohen-Macaulay ring.
\end{definition} 

\begin{example} Smooth varieties are Cohen-Macaulay. The nilpotent cone $\N$ of a simple Lie algebra $\g$ over an algebraically closed field of good characteristic is Cohen-Macaulay \cite[8.5]{Jan:2004}.
\end{example}

\subsection{Minors and Determinantal rings}
Let $U = (u_{ij})$ be an $m\times n$ matrix over a ring $R$. For indices $a_1, \ldots , a_t, b_1, \ldots , b_t$ such that $1\le a_i\le m, 1\le b_i\le n, i = 1, \ldots , t$, we put
\[ [a_1, \ldots , a_t ~|~ b_1, \ldots , b_t] = \det
\left( \begin{array}{ccc}
u_{a_1b_1} & \cdots & u_{a_1b_t} \\
\vdots & \ddots & \vdots \\
u_{a_tb_1} & \cdots & u_{a_tb_t} \end{array} \right). \]
We do not require that $a_1, \ldots , a_t$ and $b_1, \ldots , b_t$ are given in ascending order. Note that
\[ [a_1,\ldots, a_t~|~b_1,\ldots, b_t] = 0 \]
if $t > \min(m, n)$. To be convenient, we let $[\emptyset~|~\emptyset] = 1$. If $a_1\le\cdots\le a_t$ and $b_1\le\cdots\le b_t$ we call $[a_1, \ldots , a_t~|~b_1, \ldots , b_t]$ a $t$-minor of $U$.

\begin{definition}
Let $B$ be a commutative ring, and consider an $m\times n$ matrix
\[ X=\left( \begin{array}{ccc}
x_{11} & \cdots & x_{1n} \\
\vdots & \ddots & \vdots \\
x_{m1} & \cdots & x_{mn} \end{array} \right) \]
whose entries are independent indeterminates over $R$. Let $R(X)$ be the polynomial ring over all the indeterminates of $X$, and let $I_t(X)$ be the ideal in $R(X)$ generated by all $t$-minors of $X$. For each $t\ge 1$, the ring 
\[ R_t(X)=\frac{R(X)}{I_t(X)} \]
is called a {\it determinantal ring}.
\end{definition}

For readers' convenience, we recall nice properties of determinantal rings as follows.

\begin{proposition}\cite[1.11, 2.10, 2.11, 2.12]{BV:1988}\label{determinantal rings}
If $R$ is a reduced ring, then for every $1\le t\le\min(m,n)$, $R_t(X)$ is a reduced, Cohen-Macaulay, normal domain of dimension $(t-1)(m+n-t+1)$.
\end{proposition}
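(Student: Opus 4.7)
The plan is to prove the structural properties in a natural order---first irreducibility together with the dimension formula, then reducedness (i.e.\ radicality of $I_t(X)$), then Cohen-Macaulayness, and finally normality---reducing the general reduced-ring hypothesis on $R$ to the case of a field by standard base-change arguments, since the properties of interest are preserved by restricting to the generic fibre over each minimal prime of $R$.

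First I would identify $\spec R_t(X)/\sqrt{0}$ set-theoretically with the determinantal variety $D_{t-1}=\{A\in\A^{mn}:\rank A\le t-1\}$, so that irreducibility, the dimension formula, and (once $I_t(X)$ is known radical) the integral-domain property all reduce to geometric statements about $D_{t-1}$. Irreducibility follows from the parametrization
\[ \varphi\colon \A^{m(t-1)}\times \A^{n(t-1)}\longrightarrow \A^{mn},\qquad (A,B)\longmapsto AB^{T}, \]
whose image is exactly $D_{t-1}$. For the dimension, a generic point of $D_{t-1}$ has rank precisely $t-1$ and its fiber under $\varphi$ is a $GL_{t-1}$-torsor, so
\[ \dim D_{t-1}=(m+n)(t-1)-(t-1)^{2}=(t-1)(m+n-t+1). \]

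Second, I would show that $I_t(X)$ is a radical ideal, which simultaneously gives reducedness and, combined with irreducibility, that $R_t(X)$ is an integral domain. The classical route of Hochster--Eagon is to embed $I_t(X)$ into a \emph{principal radical system}: one introduces a family of ideals $\{I_\alpha\}$ generated by minors and indexed by pairs of partial tableaux, and verifies by downward induction that each $I_\alpha$ is radical, using at every step a non-zero-divisor $x$ modulo the previous ideal together with the identity $I_\alpha=(I_\alpha:x)\cap(I_\alpha+(x))$. For Cohen-Macaulayness I would then exploit the algebra-with-straightening-law (ASL) structure on $R_t(X)$ over the poset of minors of $X$ of size less than $t$: the standard monomials (products of pairwise comparable minors) form a $k$-basis of $R_t(X)$, and the straightening relations deform $R_t(X)$ to the Stanley--Reisner ring of the order complex of that poset. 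A combinatorial verification that the order complex is shellable then yields Cohen-Macaulayness via Reisner's criterion.

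Finally, normality follows from Serre's criterion: $(S_2)$ is immediate from Cohen-Macaulay, while $(R_1)$ amounts to the geometric observation that the singular locus of $D_{t-1}$ is $D_{t-2}$, which has codimension $m+n-2t+3\ge 3$ in $D_{t-1}$ for every $2\le t\le \min(m,n)$ (and $(R_1)$ is trivial when $t=1$ since $D_0$ is a reduced point). The main obstacle is the ASL/straightening-law machinery: both radicality of $I_t(X)$ and Cohen-Macaulayness of $R_t(X)$ rest on the combinatorics of standard tableaux and shellability of the order complex of minors, which is the technical core of Bruns--Vetter. The dimension computation and the appeal to Serre's criterion are comparatively elementary once that combinatorial backbone is in hand.
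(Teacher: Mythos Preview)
The paper does not prove this proposition at all; it is quoted as a known result with a citation to Bruns--Vetter \cite[1.11, 2.10, 2.11, 2.12]{BV:1988} and then used as a black box in Sections \ref{Comvar of 2 by 2} and \ref{comvar of 3 by 3}. So there is no ``paper's own proof'' to compare against. Your outline is essentially a faithful sketch of the arguments in the cited source: irreducibility and dimension via the parametrization by matrix products, radicality of $I_t(X)$ via the Hochster--Eagon principal radical system, Cohen--Macaulayness via the ASL structure and shellability of the order complex of minors, and normality via Serre's criterion together with the codimension of the singular locus $D_{t-2}\subset D_{t-1}$. All of this is correct in broad strokes.

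Two small remarks. First, in Bruns--Vetter the Cohen--Macaulay property is already a byproduct of the Hochster--Eagon ``generic perfection'' argument, so invoking the ASL machinery separately is a second, independent route rather than a necessary step; either suffices. Second, the proposition as stated in the paper is slightly imprecise: over a base ring $R$ that is reduced but not an integral domain, $R_t(X)$ is reduced and Cohen--Macaulay but not a domain, and normality requires $R$ normal. Your reduction ``to the case of a field by restricting to the generic fibre over each minimal prime of $R$'' handles reducedness and Cohen--Macaulayness correctly, but it cannot manufacture the domain property when $\spec R$ is disconnected. This is harmless for the paper's applications, where $R=k$ is always a field.
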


\subsection{The moment morphism}\label{moment map}
Suppose $G$ is a simple algebraic group. It is well-known that $\N=G\cdot\fraku$ where $\fraku$ is the Lie algebra of the unipotent radical subgroup $U$ of the Borel subgroup $B$ of $G$, and the dot is the adjoint action of $G$ on the Lie algebra $\g$. Note that if $u_1,u_2$ are commuting in $\fraku$, then so are $g\cdot u_1,g\cdot u_2$ in $\N$ for each $g\in G$. This observation can be generalized to give the following moment map
\[ m:G\times^BC_r(\fraku)\rightarrow C_r(\N)\]
by setting $m[g,(u_1,\ldots,u_r)]=(g\cdot u_1,\ldots,g\cdot u_r)$ for all $g\in G,$ and $(u_1,\ldots,u_r)\in C_r(\fraku)$. In the case when $r=1$, this is the moment map in the {\it Springer resolution}. Therefore, we also call it {\it the moment morphism} for each $r\ge 1$. The following proposition shows surjectivity of this morphism.

\begin{theorem}\label{surjectivity of moment}\footnote{The author would like to thank Christopher M. Drupieski for the main idea in this proof}
The moment morphism $m:G\times^BC_r(\fraku)\rightarrow C_r(\N)$ is always surjective.
\end{theorem}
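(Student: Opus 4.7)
The plan is to show that every $r$-tuple $(y_1,\ldots,y_r)\in C_r(\N)$ can be simultaneously $G$-conjugated into $\fraku$: concretely, that there exists $g\in G$ with $g^{-1}\cdot y_i\in\fraku$ for each $i$. Once this is established, the class $[g,(g^{-1}y_1,\ldots,g^{-1}y_r)]\in G\times^B C_r(\fraku)$ provides the desired preimage under $m$.

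The first step is to observe that the $k$-span $\mathfrak{a}=\sum_i k\,y_i$ is an abelian Lie subalgebra of $\g$ consisting entirely of nilpotent elements, since any $k$-linear combination of pairwise commuting nilpotent elements is again nilpotent. It therefore suffices to show that any commutative subalgebra $\mathfrak{a}\subset\N$ is contained in $g\fraku g^{-1}$ for some $g\in G$.

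The strategy for this is to integrate $\mathfrak{a}$ to a closed connected commutative unipotent subgroup of $G$. In good characteristic, each nilpotent $y_i$ arises as the differential at $0$ of a one-parameter additive subgroup $\phi_i:\mathbb{G}_a\to G$ (available, for example, via a Springer isomorphism $\N\cong\calU$, or via the exponential map in a faithful representation). Commutation of the $y_i$'s in $\g$ yields commutation of the $\phi_i$'s in $G$, so the product $H=\phi_1(\mathbb{G}_a)\cdots\phi_r(\mathbb{G}_a)$ is a closed connected commutative unipotent subgroup of $G$ whose Lie algebra contains $\mathfrak{a}$. By the classical Borel--Tits theorem, any connected unipotent subgroup of $G$ is contained in the unipotent radical of some Borel subgroup; applied to $H$, this produces $g\in G$ with $H\subset gUg^{-1}$, and passing to Lie algebras gives $g^{-1}y_i\in\fraku$ for each $i$, as required.

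The principal technical obstacle is the step that upgrades commutation of the nilpotents $y_i\in\g$ to commutation of the associated one-parameter subgroups $\phi_i\subset G$. This is immediate in characteristic zero via the formal exponential, but in positive (good) characteristic it requires either a Springer isomorphism $\N\cong\calU$ chosen so as to respect commutators, or a direct verification using a faithful representation and the matrix exponential applied to commuting nilpotent matrices. With this in hand, the appeal to Borel--Tits completes the argument.
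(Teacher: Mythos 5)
Your proof takes a genuinely different route from the paper. Both arguments begin with the same observation: the $k$-span of $(v_1,\ldots,v_r)$ is an abelian Lie subalgebra consisting of nilpotent elements. From there the paper stays entirely at the Lie-algebra level, extending this abelian (hence solvable) span to a maximal solvable subalgebra of $\g$ and invoking conjugacy of Borel subalgebras to move everything into $\fraku$. You instead try to lift the problem to the group level, integrating each $y_i$ to a one-parameter additive subgroup $\phi_i$ and applying Borel--Tits to the subgroup they generate.

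The obstacle you flag is a real gap, not a formality. A Springer isomorphism $\N\to\calU$ is merely a $G$-equivariant isomorphism of \emph{varieties}; there is no reason a priori for it to carry commuting nilpotent elements of $\g$ to commuting unipotent elements of $G$, and in fact generic Springer isomorphisms do not satisfy any such multiplicative compatibility. Without commutativity of the $\phi_i(\mathbb{G}_a)$'s, the product $H=\phi_1(\mathbb{G}_a)\cdots\phi_r(\mathbb{G}_a)$ need not even be a subgroup, and the group $\langle\phi_1(\mathbb{G}_a),\ldots,\phi_r(\mathbb{G}_a)\rangle$ generated by them need not be unipotent (e.g.\ opposite root subgroups generate all of $SL_2$), so Borel--Tits does not apply. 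Your fallback (exponential in a faithful representation) does work, but only under a characteristic restriction such as $p=0$ or $p$ larger than the nilpotency class appearing in that representation, which is more restrictive than what the statement requires. The paper's argument avoids this difficulty entirely by never leaving the Lie algebra. If you want to pursue the group-level route, the cleanest patch is to simultaneously strictly triangularize the commuting nilpotent matrices $y_i$ in a faithful representation (a purely linear-algebraic fact valid in any characteristic), which produces a connected unipotent subgroup containing all the $\phi_i(\mathbb{G}_a)$ without ever needing them to commute.
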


\begin{proof}
Suppose $(v_1,\ldots,v_r)\in C_r(\N)$. Let $\mathfrak{b}'$ be the vector subspace of $\g$ spanned by the $v_i$. As $[v_i,v_j]=0$ for all $1\le i,j\le r$, $\mathfrak{b}'$ is an abelian, hence solvable, Lie subalgebra of $\g$. Thus, there exists a maximal solvable subalgebra $\mathfrak{b}''$ of $\g$ containing $\mathfrak{b}'$. By \cite[Theorem 16.4]{Hum:1978}, $\mathfrak{b}''$ and our Borel subalgebra $\mathfrak{b}$ are conjugate under some inner automorphism $Ad(g)$ with $g\in G$. So there exist $u_1,\ldots,u_r\in\mathfrak{b}$ such that 
\[(v_1,\ldots,v_r)=Ad(g^{-1})(u_1,\ldots,u_r)=g^{-1}\cdot(u_1,\ldots,u_r)=m[g^{-1},(u_1,\ldots,u_r)]. \] 
As all the $v_i$ are nilpotent and commuting, so are the $u_i$. This shows that $m$ is surjective.
\end{proof}
As a corollary, we establish the connection between irreducibility of $C_r(\fraku)$ and $C_r(\N)$.

\begin{theorem}\label{C(u)-and-C(N)}
For each $r\ge 1$, if $C_r(\fraku)$ is irreducible then so is $C_r(\N)$.
\end{theorem}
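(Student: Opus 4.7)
The plan is to exploit the surjectivity of the moment morphism $m: G\times^B C_r(\fraku)\to C_r(\N)$ (established in Theorem \ref{surjectivity of moment}) together with the standard fact that the continuous image of an irreducible space is irreducible. Thus it suffices to show that the total space $G\times^B C_r(\fraku)$ is irreducible whenever $C_r(\fraku)$ is.

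First, I would recall that $G\times^B C_r(\fraku)$ is the total space of a Zariski-locally trivial fiber bundle $\pi: G\times^B C_r(\fraku)\to G/B$ with fiber $C_r(\fraku)$; indeed, since $G\to G/B$ is a locally trivial principal $B$-bundle and $C_r(\fraku)$ is a $B$-stable closed subvariety of $\fraku^r$ (the $B$-action on each factor preserves the commuting conditions because $[b\cdot u_i,b\cdot u_j]=b\cdot [u_i,u_j]$), the associated construction $G\times^B C_r(\fraku)$ inherits local triviality. Then, since the base $G/B$ is irreducible (as $G$ is connected) and the fiber $C_r(\fraku)$ is irreducible by hypothesis, the standard result that a Zariski-locally trivial fibration with irreducible base and irreducible fiber has irreducible total space gives irreducibility of $G\times^B C_r(\fraku)$.

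Finally, since $m$ is a surjective morphism of varieties and $G\times^B C_r(\fraku)$ is irreducible, the image $C_r(\N)=m(G\times^B C_r(\fraku))$ is irreducible. There is no significant obstacle here; the only point requiring care is the verification that the fiber bundle is in fact locally trivial in the Zariski topology (so that irreducibility of base and fiber passes to the total space), but this is standard for associated bundles to the principal $B$-bundle $G\to G/B$.
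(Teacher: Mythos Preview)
Your argument is correct, but it is more elaborate than necessary compared to the paper's. Both approaches reduce to the same principle: a surjective morphism from an irreducible variety onto $C_r(\N)$. You work with the associated bundle $G\times^B C_r(\fraku)$ and invoke Zariski-local triviality of the principal $B$-bundle $G\to G/B$ to conclude that the total space is irreducible. The paper instead simply composes with the quotient map and considers the surjection $G\times C_r(\fraku)\to C_r(\N)$; since $G$ is irreducible (being connected) and $C_r(\fraku)$ is irreducible by hypothesis, the product is irreducible, and the conclusion follows immediately. Your route buys nothing extra here---the local-triviality discussion is unneeded overhead---but it is not wrong, and the fiber-bundle viewpoint is the natural one if one later wants dimension or smoothness information about $G\times^B C_r(\fraku)$.
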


\begin{proof}
As the moment morphism $G\times C_r(\fraku)\to C_r(\N)$ is surjective and $G$ is irreducible, the irreducibility of $C_r(\N)$ follows from that of $C_r(\fraku)$.
\end{proof}

\subsection{Zariski's Main Theorem}
%\subsection{Introduction}
Zariski's Main Theorem is one of the powerful tools to study structure sheaves of two schemes. In this subsection, we state the version for varieties and show that the moment map in the preceding subsection satisfies the hypotheses of Zariski's Main Theorem. We first look at proper morphisms. As defining properness requires terminology from algebraic geometry, we refer readers to \cite[Section II.4]{H:1977} for the details. We only introduce some important characterizations of proper morphisms which will be useful later.

\begin{proposition}\label{proper maps} In the following properties, all the morphisms are taken over Noetherian schemes. 
\begin{enumalph}
\item A closed immersion is proper.
\item The composition of two proper morphims is proper.
%\item The product of two proper morphims is proper.
\item A projection $X \times Y \rightarrow X$ is proper if and only if $Y$ is projective.
\end{enumalph}
\end{proposition}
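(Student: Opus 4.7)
The three statements are standard properties of proper morphisms and can be found in \cite[Section II.4]{H:1977}. The plan is to unpack each item in terms of the three conditions defining properness---separatedness, finite type, and universal closedness---and then invoke the standard base-change and composition lemmas.

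For part (a), I would verify that a closed immersion $i:Z\hookrightarrow X$ is affine (hence of finite type), separated (its diagonal is an isomorphism onto its image in $Z\times_X Z$), and universally closed. The last point reduces to the fact that any base change of a closed immersion is again a closed immersion, and a closed immersion is a closed map on underlying topological spaces. For part (b), each of the three defining conditions---separatedness, finite type, universal closedness---is preserved under composition by a direct check, so properness is as well; the separatedness step uses that the diagonal of a composition factors through diagonals of the pieces, and universal closedness is preserved because a composition of closed maps is closed and the property survives arbitrary base change.

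For part (c), the forward implication would follow by base change: if $Y$ is projective over the base field $k$, then the structure morphism $Y\to\spec k$ factors as a closed immersion into some projective space followed by the proper morphism $\mathbb{P}^n_k\to\spec k$, hence is proper by parts (a) and (b); since properness is preserved under arbitrary base change, the projection $X\times Y\to X$ obtained by base change along $X\to\spec k$ is proper. For the converse, I would specialize $X=\spec k$, which identifies the projection with the structure morphism $Y\to\spec k$, so properness of the projection forces $Y$ to be proper over $k$.

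The main obstacle lies in the converse direction of (c): in full generality, proper over $k$ is strictly weaker than projective (witness Hironaka's example of a proper non-projective threefold), so the equivalence requires either Chow's lemma combined with a quasi-projectivity hypothesis on $Y$, or an implicit restriction to the quasi-projective setting. Since every variety arising in this paper---affine spaces, $G/B$, fiber bundles $G\times^B V$ for affine $V$, and their images under the moment morphism---is quasi-projective, this subtlety does not affect our applications, and one may safely invoke the stated equivalence.
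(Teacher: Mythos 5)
The paper offers no proof of this proposition; it is stated as a collection of standard facts with a pointer to \cite[Section II.4]{H:1977}. Your sketches of (a) and (b) are correct and are exactly the arguments that appear in Hartshorne's Corollary~II.4.8. Your treatment of (c) is where the proposal earns its keep: you correctly observe that the ``only if'' direction, as literally stated, is false over a general base---properness of $Y\to\spec k$ is strictly weaker than projectivity, by Hironaka's example---and that the equivalence only holds once one knows $Y$ is quasi-projective (a complete open subscheme of a projective scheme is clopen, hence projective). You also correctly note that this subtlety is harmless here, since the paper only ever invokes the ``if'' direction, for $Y=G/B$ in the proof of Proposition~\ref{birational moment}, and every variety appearing in the paper is quasi-projective. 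So your proposal is not merely consistent with the paper's citation-only treatment; it is more careful, flagging a genuine imprecision in part~(c) that the paper's statement glosses over.
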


We also recall that a rational map $\varphi:X\to Y$ (which is a morphism only defined on some open subset)  is called birational if it has an inverse rational map.

There are many versions of Zariski's Main Theorem. Here we state a ``pre-version" of the theorem since the Main Theorem immediately follows from this result (cf. \cite[Corollary III.11.3 and III.11.4]{H:1977}).

\begin{theorem}\label{Zariski}
Let $f:X\rightarrow Y$ be a birational proper morphism of varieties and suppose $Y$ is normal. Then $f_*\calO_X=\calO_Y$.
\end{theorem}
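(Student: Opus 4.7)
The plan is to reduce the sheaf-theoretic equality $f_*\calO_X = \calO_Y$ to a purely local statement on an affine open cover of $Y$ and settle it by an integral-extension argument. Cover $Y$ by affine opens $V_\alpha = \spec A_\alpha$, and set $B_\alpha := \Gamma(f^{-1}(V_\alpha),\calO_X)$, so that $(f_*\calO_X)|_{V_\alpha}$ is the sheaf associated to the $A_\alpha$-algebra $B_\alpha$. Because $f$ is birational, and in particular dominant, the canonical map $\calO_Y \to f_*\calO_X$ is injective; locally this yields an inclusion $A_\alpha \hookrightarrow B_\alpha$. The goal is to promote each inclusion to an equality and then glue.

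Two inputs drive the argument. First, because $f$ is proper, the direct image $f_*\calO_X$ is a \emph{coherent} sheaf of $\calO_Y$-algebras (this is the proper mapping theorem of Grothendieck–Serre); locally this says that $B_\alpha$ is a finitely generated $A_\alpha$-module, hence an integral extension of $A_\alpha$. Second, since $f$ is birational, there exist Zariski-dense opens $U \subseteq X$ and $W \subseteq Y$ on which $f|_U : U \to W$ is an isomorphism, so $X$ and $Y$ have the same function field and $B_\alpha$ embeds into $k(X)=k(Y)=\mathrm{Frac}(A_\alpha)$. Combining these, $B_\alpha$ lies in the integral closure of $A_\alpha$ inside $\mathrm{Frac}(A_\alpha)$. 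Normality of $Y$ means precisely that each $A_\alpha$ is integrally closed in $\mathrm{Frac}(A_\alpha)$, so $B_\alpha = A_\alpha$. Gluing over the cover gives $f_*\calO_X = \calO_Y$.

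The main obstacle is Step 1, the coherence of $f_*\calO_X$, which is the nontrivial ingredient imported from the proper mapping theorem; the remainder is standard commutative algebra. A small secondary point is to verify that $B_\alpha$ really embeds in $k(Y)$ rather than into some larger total ring of fractions: this uses that $X$ is an integral variety (so $B_\alpha$ is a domain) together with dominance of $f$ (so the embedding $A_\alpha \hookrightarrow B_\alpha$ extends to an inclusion of fraction fields, which birationality then forces to be the identity).
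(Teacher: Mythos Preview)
Your argument is correct and is precisely the standard proof: coherence of $f_*\calO_X$ via the proper mapping theorem, followed by the integral-closure argument using birationality and normality. The paper does not give its own proof of this statement but simply cites \cite[Corollary III.11.3 and III.11.4]{H:1977}, and the proof there is exactly the one you have written out; so your proposal matches the intended reference.
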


We now verify that the morphism $m:G\times^BC_r(\fraku)\to C_r(\N)$ satisfies the hypotheses of Zariski's Main Theorem. In other words, we have
\begin{proposition}\label{birational moment}
For each $r\ge 1$, the moment morphism $m:G\times^BC_r(\fraku)\to C_r(\N)$ is birational proper.
\end{proposition}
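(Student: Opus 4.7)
The strategy is to verify properness and birationality separately.

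For properness, I use the standard closed embedding of an associated fiber bundle into a trivial bundle. Since $C_r(\fraku)$ is a closed $B$-stable subvariety of the $G$-variety $C_r(\N)$, the map
\[ G \times^B C_r(\fraku) \to G/B \times C_r(\N), \qquad [g, (u_1, \ldots, u_r)] \mapsto (gB, g \cdot u_1, \ldots, g \cdot u_r), \]
is a closed immersion, and composing it with the second projection recovers the moment morphism $m$. The projection $G/B \times C_r(\N) \to C_r(\N)$ is proper by Proposition \ref{proper maps}(c) because $G/B$ is projective. Combined with parts (a), (b) of that proposition, $m$ is proper.

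For birationality, I exhibit a dense open subset of $C_r(\N)$ over which $m$ is an isomorphism. Let $\Omega_r = \{(v_1, \ldots, v_r) \in C_r(\N) : v_1 \in \calO_{\reg}\}$; this is open in $C_r(\N)$ because $\calO_{\reg}$ is the unique open $G$-orbit in $\N$. The essential geometric input is that a regular nilpotent element lies in a unique Borel subalgebra of $\g$. Hence for $(v_1, \ldots, v_r) \in \Omega_r$, any preimage $[g, (u_1, \ldots, u_r)]$ has the coset $gB$ uniquely determined by $v_1$, after which $u_i = g^{-1} \cdot v_i$ is forced; so $m^{-1}(\Omega_r) \to \Omega_r$ is a set-theoretic bijection. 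An inverse morphism is built using the $G$-equivariant map $\calO_{\reg} \cong G/Z_G(v_{\reg}) \twoheadrightarrow G/B$, which is well-defined because $Z_G(v_{\reg}) \subseteq B$ for regular $v_{\reg}$ (the centralizer of a regular nilpotent is abelian and sits inside the unique Borel containing it). Extending by the remaining coordinates $v_2, \ldots, v_r$ yields a morphism inverse to $m|_{m^{-1}(\Omega_r)}$, so $m$ restricts to an isomorphism over $\Omega_r$.

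The main obstacle is confirming that $\Omega_r$ is dense in $C_r(\N)$ (equivalently, that $m^{-1}(\Omega_r)$ is dense in $G \times^B C_r(\fraku)$). A dimension check is reassuring: for $u_1$ regular nilpotent, $z(u_1)$ is abelian of dimension $\rank(\g)$ and contained in $\fraku$, giving $\dim \Omega_r = \dim \N + (r-1)\rank(\g)$, which matches the dimension of $G \times^B \{(u_1, \ldots, u_r) \in C_r(\fraku) : u_1 \text{ regular}\}$. Density is immediate on any irreducible component of $C_r(\N)$ that contains a tuple with regular first entry, and in the settings central to this paper (notably $\fraksl_2$ and $\fraksl_3$) $C_r(\N)$ will be shown to be irreducible, at which point density is automatic; with density in hand, $m$ is proper and restricts to an isomorphism on a dense open, hence birational.
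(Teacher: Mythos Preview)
Your proof is correct and follows essentially the same route as the paper's. Both arguments factor $m$ through the closed immersion $G\times^B C_r(\fraku)\hookrightarrow G/B\times C_r(\N)$ and the projection to $C_r(\N)$ to get properness, and both restrict $m$ to the open preimage of $\calO_{\reg}$ under the first-coordinate projection, using $Z_G(v_{\reg})\subseteq B$ to obtain an isomorphism there.

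The one notable difference is that you explicitly flag the density of $\Omega_r$ as an issue and defer it to the later irreducibility results for $\fraksl_2$ and $\fraksl_3$, whereas the paper simply asserts birationality once the isomorphism over the open set $V=p_1^{-1}(\calO_{\reg})$ is established. Your caution is warranted: as the Remark immediately following the proposition observes, $C_r(\N)$ is reducible whenever $G$ is not of type $A$ and $r\ge 2$, so density of $\Omega_r$ genuinely can fail in general and the paper's claim of birationality should be read with the same caveat you supply. Since irreducibility in the cases of interest is proved independently of birationality (via surjectivity of $m$ and irreducibility of the source), there is no circularity in your deferral.
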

\begin{proof}
We generalize the proofs of Lemmas 1 and 2 in \cite[6.10]{Jan:2004}. For the properness, we consider the map 
\[ \epsilon:G\times^BC_r(\fraku)\hookrightarrow G/B\times C_r(\N)\]
with $\epsilon[g,(u_1,\ldots,u_r)]=(gB,g\cdot(u_1,\ldots,u_r))$ for all $g\in G$ and $(u_1,\ldots,u_r)\in C_r(\fraku)$. By the same argument as in \cite[6.4]{Jan:2004}, we can show that this map is a closed embedding; hence a proper morphism by Proposition \ref{proper maps}(a). Next, as $G/B$ is projective, the projection map $p:G/B\times C_r(\N)\to C_r(\N)$ is proper by Proposition \ref{proper maps}(c). Therefore, part (b) of Proposition \ref{proper maps} implies that $m=p\circ\epsilon$ is also proper.

Consider the projection of $C_r(\N)$ onto the first factor $p_1:C_r(\N)\to\N$. Recall that $z_{\reg}$ is the centralizer of a fixed regular element $v_{\reg}$ in $\N$. From Lemma 35.6.7 in \cite{TY:2005}, $z_{\reg}$ is a commutative Lie algebra. Then we have 
\[p_1^{-1}(\calO_{\reg})=C(\calO_{\reg},\N,\ldots,\N)=G\cdot(v_{\reg},z_{\reg},\ldots,z_{\reg}). \]
As $\calO_{\reg}$ is an open subset in $\N$, the preimage $p_1^{-1}(\calO_{\reg})$ is open in $C_r(\N)$. Let $V=p_1^{-1}(\calO_{\reg})$. Since $Z_G(v_{\reg})\subseteq B$, we have $m$ induces an isomorphism from $m^{-1}(V)$ onto $V$. It follows that $m$ is a birational morphism. 
\end{proof}

\begin{remark}
We have not shown that $m$ satisfies all the hypotheses of Zariski's Theorem since the normality for $C_r(\N)$ is still unkown. As mentioned in Section \ref{introduction}, the variety $C_r(\N)$ is normal only when it is irreducible. In particular, Premet already proved that $C_2(\N)$ is irreducible if and only if $G$ is of type $A$ \cite{Pr:2003}. By considering the natural projection map $C_r(\N)\to C_2(\N)$, it follows that if $G$ is not of type $A$ then $C_r(\N)$ is reducible for each $r\ge 2$. In this paper we prove for arbitrary $r\ge 1$ that the variety $C_r(\N)$ is irreducible for types $A_1$ and $A_2$. The result for type $A_n$ with arbitrary $n, r>2$ remains an open problem.
\begin{conjecture}
If $G$ is of type $A$, then $C_r(\N)$ is irreducible. Moreover, it is normal. In other words, the morphism
\[ m:G\times^BC_r(\fraku)\to C_r(\N) \]
satifies all the hypotheses of Zariski's Theorem.
\end{conjecture}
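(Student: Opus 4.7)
The plan is to attack the conjecture in two stages: first irreducibility, then normality, and finally deduce the Zariski's Main Theorem conclusion. For irreducibility, I would focus on showing $C_r(\fraku)$ is irreducible in type $A$, since Theorem \ref{C(u)-and-C(N)} then immediately transfers irreducibility to $C_r(\N)$. In type $A_{n-1}$, $\fraku$ is the space of strictly upper-triangular $n\times n$ matrices. The natural candidate for a dense irreducible subset is the locus where the first component $u_1$ is a regular nilpotent of $\fraku$ (a single Jordan block). For such $u_1$ the centralizer $z(u_1)\cap\fraku$ is commutative of dimension $n-1$, so the fiber over this locus is the irreducible affine space $(z(u_1)\cap\fraku)^{r-1}$, and a dimension count plus a limiting argument (degenerating an arbitrary commuting $r$-tuple into this stratum, in the spirit of Premet's work on $C_2(\N)$ and the Kirillov--Neretin analysis) would establish density. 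An alternative is induction on $n$, fibering $C_r(\fraku_n)$ over $C_r(\fraku_{n-1})$ by projecting out the top row.

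For normality, the strategy is Serre's criterion $(R_1)+(S_2)$. For $(R_1)$, one needs to extend Theorem \ref{codim of sing} from $\fraksl_3$ to arbitrary type $A$: stratify $C_r(\N)$ by the $G$-orbit type of $v_1$, and use the explicit description of centralizers of nilpotents in $\fraksl_n$ via Jordan type to bound the codimension of each singular stratum. For $(S_2)$, the most promising route is to establish Cohen-Macaulayness of $C_r(\fraku)$ via a principal radical system argument extending the one used for $\fraksl_2$ in the present paper (following the Hochster framework in \cite{BV:1988}). Once $C_r(\fraku)$ is Cohen-Macaulay, $G\times^B C_r(\fraku)$ inherits the property as a locally trivial $C_r(\fraku)$-bundle over the smooth base $G/B$. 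Then the Kempf-style result that a proper birational morphism from a Cohen-Macaulay variety to a normal variety forces Cohen-Macaulayness of the image, combined with $(R_1)$, will yield normality of $C_r(\N)$. With normality in hand, Proposition \ref{birational moment} lets us invoke Theorem \ref{Zariski} to conclude $m_*\calO_{G\times^B C_r(\fraku)}=\calO_{C_r(\N)}$.

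The main obstacle is almost certainly the simultaneous irreducibility and Cohen-Macaulayness of $C_r(\fraku)$ for all $n$ and $r$. The nilpotency restriction is essential: for unrestricted matrices, $C_r(\mathfrak{gl}_n)$ is known to be reducible for $n\ge 4$, $r\ge 4$, so any proof must crucially exploit that entries lie in $\fraku$. The Cohen-Macaulay question is even more delicate, since even the ordinary case $C_2(\mathfrak{gl}_n)$ (the Artin--Hochster conjecture) is open for $n\ge 5$. One hopes that the defining relations of $C_r(\fraku)$ — being commutator equations among strictly upper-triangular entries — admit a Gr\"obner or principal radical system structure compatible with the filtration by superdiagonal, but this has only been carried out in small rank. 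A reasonable intermediate target, therefore, is to first settle type $A_3$ (i.e.\ $\fraksl_4$) for all $r$ by combining an explicit determinantal description of $C_r(\fraku)$ with the machinery already developed in Sections \ref{nilpotent commuting variety} and \ref{comvar of 3 by 3}, before attempting the general inductive framework.
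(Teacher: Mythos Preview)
This statement is a \emph{conjecture} in the paper; the paper gives no proof, so there is nothing to compare your proposal against. What you have written is a research plan, and it is worth flagging where that plan has a genuine gap.

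The normality argument is circular as stated. You invoke ``the Kempf-style result that a proper birational morphism from a Cohen--Macaulay variety to a \emph{normal} variety forces Cohen--Macaulayness of the image'' in order to deduce normality of $C_r(\N)$. But normality of the target is a hypothesis of that result, not a conclusion; you cannot use it to establish normality. The honest route via Serre's criterion requires $(S_2)$ for $C_r(\N)$ itself, and Cohen--Macaulayness of the bundle $G\times^B C_r(\fraku)$ does not transfer to the image without already knowing $m_*\calO = \calO$, which is exactly what Zariski's theorem would give you \emph{after} normality. A telling sanity check: in type $A_2$ the paper already establishes every ingredient you list --- irreducibility and Cohen--Macaulayness of $C_r(\fraku)$ (Proposition~\ref{commuting u(sl_3)}), irreducibility of $C_r(\N)$, and $(R_1)$ (Theorem~\ref{codim of sing}) --- yet the paper still does \emph{not} conclude normality of $C_r(\N)$ for $\fraksl_3$. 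So your proposed chain of implications cannot be correct, or the $\fraksl_3$ case would already be settled.

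On irreducibility: reducing to irreducibility of $C_r(\fraku)$ is natural, but be aware that this is not known in general and may well be false for large $n$. The ``dense regular locus'' heuristic you describe is exactly the kind of argument that breaks down for $C_r(\mathfrak{gl}_n)$ when $n\ge 4$ and $r\ge 4$; the nilradical $\fraku$ is a genuinely different (nilpotent) Lie algebra, but there is no a priori reason the same phenomenon cannot occur there. Your suggestion to first settle $\fraksl_4$ explicitly is the right instinct, but you should treat irreducibility of $C_r(\fraku)$ as a conjecture in its own right rather than a routine lemma.
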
 
\end{remark}

\subsection{Singularities and Resolutions}
Here we state an observation on determinning the singular points of an affine variety defined by homogeneous polynomials, and then define a resolution of singularities.  

\begin{proposition}\label{trivial singular}
Let $V$ be an affine variety whose defining radical ideal is generated by a non-empty set of homogeneous polynomials of degree at least 2. Then $0$ is always a singular point of $V$.
\end{proposition}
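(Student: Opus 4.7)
The plan is to apply the Jacobian criterion at the origin. First I would observe that $0 \in V$: since every generator of the defining ideal $I(V)$ is a homogeneous polynomial of positive degree, each such generator vanishes at the origin, so $0$ lies on $V$.

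Next I would compute the Zariski tangent space $T_0 V$ inside the ambient $\A^n$. By the Jacobian criterion, $T_0V$ is cut out by the linear forms $\sum_{i=1}^n \tfrac{\partial f}{\partial x_i}(0)\, x_i$ as $f$ ranges over any generating set of $I(V)$. For each generator $f$, which is homogeneous of degree $d \ge 2$, every partial derivative $\partial f/\partial x_i$ is itself homogeneous of degree $d-1 \ge 1$, and so vanishes at $0$. Consequently the entire Jacobian matrix at $0$ vanishes, giving $T_0 V = \A^n$, i.e.\ $\dim_k T_0 V = n$.

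On the other hand, because the set of generators is non-empty and consists of nonzero polynomials, $I(V)$ is a nonzero (proper) ideal of $k[x_1,\dots,x_n]$, so $V$ is a proper closed subvariety of $\A^n$ and hence $\dim V < n$. Combining this with $\dim T_0 V = n$ yields $\dim T_0 V > \dim V$, which is precisely the condition that $0$ is a singular point of $V$.

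The argument is essentially routine once the Jacobian criterion is invoked; the only subtle point is to notice that the hypothesis of degree $\ge 2$ (rather than merely $\ge 1$) is exactly what forces every first-order partial to vanish at the origin, and that the hypothesis of a non-empty generating set is what ensures $V \neq \A^n$ so that the dimension comparison is strict. No real obstacle arises.
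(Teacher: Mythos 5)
Your proof is correct and follows essentially the same route as the paper: evaluate the Jacobian at the origin, observe that homogeneity of degree $\ge 2$ forces every first partial to vanish there so the tangent space is all of $\A^n$, and compare with $\dim V < n$ guaranteed by the non-empty generating set. You are slightly more explicit than the paper in checking $0 \in V$ and in justifying the strict inequality, but the argument is the same.
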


\begin{proof}
Suppose $V$ is an affine subvariety of the affine space $\A^m$ associated with the coordinate ring $k[x_1,\ldots,x_m]$. Let $f_1,\ldots,f_n$ be the set of polynomials defining $V$. Note that $\dim V< m$ since $n\ge 1$. Consider the Jacobian matrix 
\[ \left( \begin{array}{ccc} \frac{df_1}{x_1} & \cdots & \frac{df_1}{x_m} \\
\vdots & \ddots & \vdots \\
\frac{df_n}{x_1} & \cdots & \frac{df_n}{x_m} \end{array} \right) \]
As all the $f_i$ are homogeneous of degree $\ge 2$, we have $\frac{df_i}{x_j}(0,\ldots,0)=0$ for all $1\le i\le n, 1\le j\le m$. It follows that the tangent space at 0 has dimension $m$ which is greater than $\dim V$. Thus $0$ is a singular point of $V$.
\end{proof}

\begin{definition}
A variety $X$ has a resolution of singularities if there exists a non-singular variety $Y$ such that there is a proper birational morphism from $Y$ to $X$.
\end{definition}

\begin{definition}
A variety $X$ has rational singularities if it is normal and has a resolution of singularities
\[f:Y\to X\]
such that the higher direct image $R^if_*\calO_Y$ vanishes for $i\ge 1$. (Sometimes one calls $f$ a rational resolution.)
\end{definition}

In Lie theory, the nullcone $\N$ admits a resolution of singularites, the Springer resolution, and also has rational singularities. One of our goals in the present paper is to generalize this resolution for the nilpotent commuting variety over rank two Lie algebra.

\section{Commuting Varieties over 2 by 2 matrices}\label{Comvar of 2 by 2}

\subsection{} Recall that the problem of showing that the commuting variety over $n\times n$ matrices is Cohen-Macaulay and normal is very difficult to verify. With ordinary commuting varieties, computer verification works up to $n=4$ \cite{H:2006}. There are also some studies on the Cohen-Macaulayness of other structures closely similar to ordinary commuting varieties by Knutson, Mueller, Zolbanin-Snapp and Zoque (cf. \cite{K:2005},\cite{Mu:2007}, \cite{MS:2011}, \cite{Zoque:2010}). Very little appears to be known for commuting varieties in general. In this section, we confirm the properties of being Cohen-Macaulay and normal for $C_r(\mathfrak{gl}_2)$ and $C_r(\fraksl_2)$ with arbitrary $r\ge 1$.

\subsection{Nice properties of $C_r(\mathfrak{gl}_2)$ and $C_r(\fraksl_2)$}
We first show a general result connecting the commuting varieties over $\mathfrak{gl}_n$ and $\fraksl_n$.

\begin{theorem}\label{isomorphism of comm vars}\footnote{The author would like to thank William Graham for his assistance in generalizing the result to $\mathfrak{gl}_n$.}
For each $n$ and $r\ge 1$, if $p$ does not divide $n$, then there is an isomorphism of varieties from $C_r(\mathfrak{gl}_n)$ to $C_r(\fraksl_n)\times\mathbb{A}^r$ defined by setting 
\begin{align}\label{C_r(M_2)}
%\varphi: C_r(\mathfrak{gl}_n) &\to C_r(\fraksl_n)\times\mathbb{A}^r \\
\varphi: (v_1,\ldots,v_r) &\mapsto \left(v_1-\frac{\Tr(v_1)}{n}I_n,\ldots,v_r-\frac{\Tr(v_r)}{n}I_n\right)\times\left(\Tr(v_1),\ldots,\Tr(v_r)\right)
\end{align}
for $v_i\in\mathfrak{gl}_n$.
\end{theorem}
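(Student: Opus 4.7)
The plan is to exploit the decomposition $\mathfrak{gl}_n = \mathfrak{sl}_n \oplus k \cdot I_n$, which is well-defined precisely when $p \nmid n$ (so that $\tfrac{1}{n}$ is a scalar in $k$, and the trace functional splits the inclusion $k\cdot I_n\hookrightarrow \mathfrak{gl}_n$). Every $v\in\mathfrak{gl}_n$ then has a unique expression $v = \bigl(v - \tfrac{\Tr(v)}{n} I_n\bigr) + \tfrac{\Tr(v)}{n} I_n$, which is what the formula for $\varphi$ records. I would write down an explicit inverse
$$\psi\colon C_r(\fraksl_n)\times \mathbb{A}^r \longrightarrow C_r(\mathfrak{gl}_n),\qquad \bigl((w_1,\ldots,w_r),(t_1,\ldots,t_r)\bigr)\longmapsto \bigl(w_1+\tfrac{t_1}{n}I_n,\ldots,w_r+\tfrac{t_r}{n}I_n\bigr).$$
Both $\varphi$ and $\psi$ are manifestly given by polynomials in the matrix entries of the input (using that $\Tr$ is linear and $\tfrac{1}{n}\in k$), so they are morphisms of affine varieties.

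The crucial observation that makes $\varphi$ and $\psi$ respect the commuting conditions is that $I_n$ is central in $\mathfrak{gl}_n$: for any scalars $\alpha_i,\alpha_j\in k$ and any $v_i,v_j\in\mathfrak{gl}_n$,
$$\bigl[v_i-\alpha_i I_n,\ v_j-\alpha_j I_n\bigr] = [v_i,v_j].$$
Applying this to $\alpha_i = \Tr(v_i)/n$ shows that $\varphi$ sends $C_r(\mathfrak{gl}_n)$ into $C_r(\fraksl_n)\times\mathbb{A}^r$, and applied in the other direction it shows that $\psi$ lands in $C_r(\mathfrak{gl}_n)$. That $\varphi$ actually maps to traceless matrices is immediate from $\Tr\bigl(v_i - \tfrac{\Tr(v_i)}{n}I_n\bigr) = \Tr(v_i) - \tfrac{\Tr(v_i)}{n}\cdot n = 0$ (again using $p\nmid n$).

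To finish, I would verify $\psi\circ\varphi = \mathrm{id}$ and $\varphi\circ\psi = \mathrm{id}$ by direct substitution: starting from $(v_1,\ldots,v_r)$, applying $\psi\circ\varphi$ recovers $v_i - \tfrac{\Tr(v_i)}{n}I_n + \tfrac{\Tr(v_i)}{n}I_n = v_i$, and starting from $((w_i),(t_i))$ with $\Tr(w_i)=0$, one checks that $w_i+\tfrac{t_i}{n}I_n$ has trace $t_i$ and traceless part $w_i$. There is no real obstacle here; the only subtle point is the characteristic hypothesis $p\nmid n$, which enters in two places simultaneously: it makes $\tfrac{1}{n}$ a legitimate scalar and guarantees $\fraksl_n\cap k\cdot I_n = 0$, ensuring the decomposition is a genuine direct sum rather than degenerating when $n\equiv 0\pmod p$.
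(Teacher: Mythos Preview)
Your proposal is correct and takes essentially the same approach as the paper: the paper's proof simply notes that adding or subtracting $cI_n$ does not affect the commuting conditions, writes down the same explicit inverse $\psi$ you gave, and declares the proof complete. Your version is more detailed (explicitly checking tracelessness, verifying both compositions, and flagging where the hypothesis $p\nmid n$ is used), but the underlying argument is identical.
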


\begin{proof}
It is easy to see that adding or subtracting $cI_n$ from the $v_i$ does not change the commuting conditions on the $v_i$. So the morphism $\varphi$ in \eqref{C_r(M_2)} is well-defined and its inverse is
\[ \varphi^{-1}:(u_1,\ldots,u_r)\times (a_1,\ldots,a_r)\mapsto \left(u_1+\frac{a_1}{n}I_n,\ldots,u_r+\frac{a_r}{n}I_n\right). \]
This completes the proof.
\end{proof}

This result implies that our work for $C_r(\mathfrak{gl}_2)$ will be done if we can prove that $C_r(\fraksl_2)$ is Cohen-Macaulay. Notice that Popov proved the normality of this variety in the case $r=2$ \cite[1.10]{Po:2008}. However, his proof depends on computer calculations to verify that the defining ideal is radical. Here we propose another approach that completely solves the problem for arbitrary $r$. Let $\fraksl_2^r$ be the affine space defined as
\[ \left\{ \left( \begin{array}{cc} x_1 &  y_1 \\ z_1 & -x_1\end{array} \right),\ldots,\left( \begin{array}{cc} x_r &  y_r \\ z_r & -x_r\end{array} \right)~\mid~x_i,\ y_i,\ z_i\in k, 1\le i\le r \right\}. \]
Then the variety $C_r(\fraksl_2)$ can be defined as the zero locus of the following ideal
\begin{align}\label{commutator of C_r(sl2)}
 J=\langle x_iy_j-x_jy_i \ , \ y_iz_j-y_jz_i \ , \ x_iz_j-x_jz_i~ \mid ~1\le i\le j\le r \rangle.
\end{align}

\begin{proposition}\label{normal-of-C_r(sl)}
For each $r\ge 1$, the variety $C_r(\fraksl_2)$ is:
\begin{enumalph}
\item irreducible of dimension $r+2$,
%\item only singular at the origin $0:=(0,\ldots,0)\in C_r(\fraksl_2)$,
\item Cohen-Macaulay and normal.
\end{enumalph}
\end{proposition}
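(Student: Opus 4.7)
The plan is to identify $J$ as the ideal of $2\times 2$ minors of a generic $r\times 3$ matrix and then invoke Proposition \ref{determinantal rings}.

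First I would form the $r\times 3$ matrix of indeterminates
\[
X = \begin{pmatrix} x_1 & y_1 & z_1 \\ \vdots & \vdots & \vdots \\ x_r & y_r & z_r \end{pmatrix}.
\]
For $1\le i<j\le r$ and any pair of columns $a<b$ in $\{1,2,3\}$, the minor $[i,j\mid a,b]$ of $X$ is, up to sign, one of the three generators $x_iy_j-x_jy_i$, $y_iz_j-y_jz_i$, $x_iz_j-x_jz_i$ listed in \eqref{commutator of C_r(sl2)}. Hence $J=I_2(X)$, and with the notation of the preceding section $k[\fraksl_2^r]/J = R_2(X)$.

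Next I would check set-theoretically that $V(J)=C_r(\fraksl_2)$. A direct computation of the commutator of the $i$-th and $j$-th matrices in $\fraksl_2$ gives diagonal entries $\pm(y_iz_j-y_jz_i)$ and off-diagonal entries $\pm 2(x_iy_j-x_jy_i)$ and $\pm 2(x_iz_j-x_jz_i)$. Since $p>2$, the vanishing of these three expressions is equivalent to the vanishing of the commutator, so $V(J)=C_r(\fraksl_2)$.

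Finally, Proposition \ref{determinantal rings} applied with $m=r$, $n=3$, $t=2$ says that $R_2(X)$ is a reduced, Cohen-Macaulay, normal integral domain of dimension $(2-1)(r+3-2+1) = r+2$. Because $R_2(X)$ is reduced, $J$ is radical, so $J=I(C_r(\fraksl_2))$ and $k[C_r(\fraksl_2)] = R_2(X)$. The fact that the coordinate ring is a domain of dimension $r+2$ yields parts (a) and (b) simultaneously: irreducibility, dimension, Cohen-Macaulayness and normality all transfer at once.

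There is no genuine obstacle here; the argument is basically a recognition lemma followed by citing the structure theory of classical determinantal rings. The only point requiring attention is the hypothesis $p>2$, which is what allows the factor of $2$ in the off-diagonal commutator entries to be cleared so that Step 2 gives $V(J)=C_r(\fraksl_2)$ on the nose; without this, the scheme-theoretic structure could differ from $R_2(X)$ even though the minors still generate the ideal of the variety.
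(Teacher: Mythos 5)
Your proposal is correct and follows essentially the same route as the paper: both recognize the commutator ideal $J$ from \eqref{commutator of C_r(sl2)} as $I_2$ of a generic matrix (you take the $r\times 3$ matrix, the paper its transpose $\mathcal{X}$ of size $3\times r$, which gives the same determinantal ring and the same dimension $(2-1)(r+3-2+1)=r+2$) and then invoke Proposition \ref{determinantal rings}. Your extra care in verifying the commutator entries, noting the factor of $2$ that requires $p\neq 2$, and explicitly observing that reducedness of $R_2(X)$ is what makes $J$ radical (so that $k[C_r(\fraksl_2)]=R_2(X)$) is a welcome amplification of a step the paper leaves implicit.
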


\begin{proof}
It was shown by Kirillov and Neretin that $C_r(\mathfrak{gl}_2)$ and $C_r(\mathfrak{gl}_3)$ are irreducible for all $r$ in characteristic 0, \cite[Theorem 4]{KN:1987}. The irreducibility of $C_r(\fraksl_2)$ in characteristic $0$ then follows by Theorem \ref{isomorphism of comm vars}. Here we provide a general proof for any characteristic $p\ne 2$.

Consider the ideal $J$ above as the ideal $I_2(\mathcal{X})$ generated by all 2-minors of the following matrix
\[\mathcal{X}=\left( \begin{array}{cccc} x_1 &  x_2 & \cdots & x_r \\y_1 &  y_2 & \ddots & y_r\\z_1 &  z_2 & \cdots & z_r\end{array} \right). \]
Then we can identify $k[C_r(\fraksl_2)]$ with the ring $R_2(\mathcal{X})=\frac{k(\mathcal{X})}{I_2(\mathcal{X})}$. It follows immediately from Proposition \ref{determinantal rings} that $R_2(\mathcal{X})$ is a Cohen-Macaulay and normal domain, hence completing the proof.
\end{proof}

\begin{corollary}\label{normality of C_r(M_2)}
Suppose $p\ne 2$. Then for each $r\ge 1$, the variety $C_r(\mathfrak{gl}_2)$ is
\begin{enumalph}
\item irreducible of dimension $2r+2$,
%\item singular at the set $\{(a_1I_2,\cdots,a_rI_2)~|~a_i\in k\}$ where $I_2$ is the 2 by 2 identity matrix,
\item Cohen-Macaulay and normal. 
\end{enumalph}
\end{corollary}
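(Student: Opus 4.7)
The plan is to derive this corollary directly by combining Theorem \ref{isomorphism of comm vars} with Proposition \ref{normal-of-C_r(sl)}. Since the hypothesis $p \ne 2$ guarantees that $p$ does not divide $n = 2$, Theorem \ref{isomorphism of comm vars} supplies an isomorphism of varieties
\[ \varphi : C_r(\mathfrak{gl}_2) \xrightarrow{\ \sim\ } C_r(\fraksl_2) \times \mathbb{A}^r, \]
so each desired property on the left follows from the corresponding property on the right.

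For part (a), I would use the standard fact that the product of two irreducible varieties over an algebraically closed field is irreducible, and that dimension is additive on products. Combined with Proposition \ref{normal-of-C_r(sl)}(a) (which gives $\dim C_r(\fraksl_2) = r+2$) and $\dim \mathbb{A}^r = r$, this yields irreducibility and $\dim C_r(\mathfrak{gl}_2) = (r+2) + r = 2r+2$.

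For part (b), I would pass to coordinate rings and write
\[ k[C_r(\mathfrak{gl}_2)] \;\cong\; k[C_r(\fraksl_2)] \otimes_k k[t_1,\dots,t_r] \;\cong\; k[C_r(\fraksl_2)][t_1,\dots,t_r]. \]
Adjoining finitely many polynomial indeterminates preserves both the Cohen-Macaulay property and normality of a finitely generated $k$-algebra (for normality one can invoke Serre's $(R_1)+(S_2)$ criterion, both of which lift from $R$ to $R[t]$). Since $k[C_r(\fraksl_2)]$ is Cohen-Macaulay and normal by Proposition \ref{normal-of-C_r(sl)}(b), the same holds for $k[C_r(\mathfrak{gl}_2)]$.

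Honestly, there is no real obstacle here: all of the substantive content was done in Proposition \ref{normal-of-C_r(sl)}, where $C_r(\fraksl_2)$ was identified with a determinantal ring $R_2(\mathcal{X})$ and Proposition \ref{determinantal rings} was invoked. The only thing to verify in the corollary is that taking a product with affine space preserves the four properties, and that the hypothesis $p \ne 2$ suffices to apply Theorem \ref{isomorphism of comm vars} in the case $n=2$.
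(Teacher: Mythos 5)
Your proof is correct and takes essentially the same route as the paper, which simply cites Theorem \ref{isomorphism of comm vars} and Proposition \ref{normal-of-C_r(sl)} and leaves the straightforward verification (that a product with $\mathbb{A}^r$ preserves irreducibility, dimension additivity, Cohen-Macaulayness, and normality) implicit. You have merely spelled out those routine checks, which is a reasonable expansion of the paper's terse ``follows immediately.''
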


\begin{proof}
Follows immediately from Theorem \ref{isomorphism of comm vars} and Proposition \ref{normal-of-C_r(sl)}.
\end{proof}

This computation allows us to state a conjecture about Cohen-Macaulayness for commuting varieties which is a generalization of that for ordinary commuting varieties.

\begin{conjecture}
Suppose $p\nmid n$. Then both commuting varieties $C_r(\mathfrak{gl}_n)$ and $C_r(\fraksl_n)$ are Cohen-Macaulay.
\end{conjecture}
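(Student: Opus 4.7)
The first step is to invoke Theorem~\ref{isomorphism of comm vars} to separate the two halves of the conjecture. Since $C_r(\mathfrak{gl}_n)\cong C_r(\fraksl_n)\times\mathbb{A}^r$ when $p\nmid n$, and tensoring with a polynomial ring preserves Cohen-Macaulayness, it suffices to treat $C_r(\fraksl_n)$.

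The defining ideal $J_n$ of $C_r(\fraksl_n)$ inside $k[\fraksl_n^r]$ is generated by the entries of the commutators $[v_i,v_j]$ for $1\le i<j\le r$. The striking coincidence that drives Proposition~\ref{normal-of-C_r(sl)} is that for $n=2$ these generators are exactly the $2$-minors of the $3\times r$ matrix formed by the coordinates, so $J_2$ is a classical determinantal ideal and one may appeal directly to Proposition~\ref{determinantal rings}. For $n\ge 3$ this identification breaks down, and one must attack $J_n$ through more intrinsic commutative algebra.

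My plan would be to pursue a Gr\"obner-degeneration strategy. The idea is to fix a term order on $k[\fraksl_n^r]$ compatible with the multigrading coming from the tuple index and from the root-space decomposition under the diagonal torus, and then to compute $\mathrm{in}(J_n)$. If for a suitable term order the initial ideal turns out to be a squarefree monomial ideal whose Stanley-Reisner complex is shellable (or at least Cohen-Macaulay), then by Reisner's criterion together with upper-semicontinuity of depth in the flat family connecting $J_n$ to $\mathrm{in}(J_n)$, Cohen-Macaulayness would transfer to $C_r(\fraksl_n)$. In parallel, one can attempt to leverage the moment morphism $m\colon G\times^B C_r(\fraku)\to C_r(\N)$ of Proposition~\ref{birational moment}: the same circle of ideas extends to a Grothendieck-Springer style map $G\times^B C_r(\frakb)\to C_r(\g)$, and if that extended map can be shown to be a rational resolution then Cohen-Macaulayness of $C_r(\frakb)$, which only concerns commuting upper-triangular matrices and is considerably more tractable, would descend to $C_r(\g)$ via the vanishing of higher direct images.

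The hard part will be twofold. First, equidimensionality: the theorems of Gerstenhaber and of Kirillov-Neretin recorded in Section~\ref{introduction} assert that $C_r(\mathfrak{gl}_n)$ is reducible once $n\ge 4$ and $r$ is large enough, so before any Cohen-Macaulay statement one must show that every irreducible component has the same dimension, and in some regime this may well fail and refute the conjecture outright. Second, even the ordinary case $r=2$ has resisted a general proof for $n\ge 5$: the Artin-Hochster assertion that $C_2(\mathfrak{gl}_n)$ is a Cohen-Macaulay integral domain has only been verified computationally up to $n=4$, so any argument treating arbitrary $r$ must simultaneously overcome the obstructions that have kept the ordinary commuting variety problem open.
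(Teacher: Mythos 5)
The statement you were asked to prove is a \emph{conjecture}; the paper offers no proof of it, so there is nothing to compare a proof against. You recognize this, and your ``proposal'' is honestly a research sketch rather than a demonstration. Within that framing, the one complete step you do take --- invoking Theorem~\ref{isomorphism of comm vars} to reduce from $C_r(\mathfrak{gl}_n)$ to $C_r(\fraksl_n)$, using that Cohen-Macaulayness is preserved under tensoring with a polynomial ring --- is correct and is exactly the reduction the paper itself uses (it is how Corollary~\ref{normality of C_r(M_2)} is deduced from Proposition~\ref{normal-of-C_r(sl)}). Your observation that the determinantal-ideal identification powering the $n=2$ case has no analogue for $n\ge 3$ is also accurate: the commutator relations are no longer minors of a single generic matrix, so Proposition~\ref{determinantal rings} does not apply.

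Beyond that, the Gr\"obner-degeneration and resolution-of-singularities strategies you describe are plausible directions but are not carried out; as written they contain no argument, only programs. The most substantive content in your proposal is a warning about the conjecture itself: since all irreducible components of these commuting varieties pass through the origin, Cohen-Macaulayness of the local ring at $0$ would force unmixedness, hence equidimensionality of the components meeting $0$. Given the reducibility results of Gerstenhaber and Kirillov--Neretin for $n\ge 4$ and large $r$, equidimensionality is not obviously available, and if it fails the conjecture is simply false in that range. That is a genuine and well-placed concern, and it is worth noting that the paper states the conjecture without restriction on $n$ or $r$, so the burden of establishing (or ruling out) equidimensionality is real. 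In short: no gap in logic, because no proof was offered on either side; but you should not present the speculative portions as if they advance the statement toward a theorem.
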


\section{Nilpotent Commuting Varieties over $\fraksl_2$}\label{nilpotent commuting variety}

With the nilpotency condition, problems involving commuting varieties turn out to be more difficult. The irreducibility of ordinary nilpotent commuting varieties was studied by Baranovsky, Premet, Basili and Iarrobino (cf. \cite{Ba:2001}, \cite{Pr:2003}, \cite{Ba:2007}, \cite{BI:2008}). However, there has not been any successful work on normality and Cohen-Macaulayness even in simple cases. In this section, we completely prove this conjecture for the nilpotent commuting variety over $\fraksl_2$. 

\subsection{Irreducibility} Let $G=SL_2, \g=\fraksl_2$, and $k$ be an algebraically closed field of characteristic $p\ne 2$. The nilpotent cone of $\g$ then can be written as follows.
\begin{align*}
\N =\left\{ \left( \begin{array}{cc} x & y \\
                             z & -x \end{array} \right) |~ x^2+yz=0~ \mbox{with}~x,y,z\in k\right\}
\end{align*}
Note that for each $r\ge 1$, $C_r(\fraku)=\fraku^r$, so that the moment map in Section \ref{moment map} can be rewritten as
\begin{equation}\label{moment-sl2}
 m:G\times^B\fraku^r\rightarrow C_r(\N).
\end{equation}
As $G/B$ and $\fraku^r$ are smooth varieties, so is the vector bundle $G\times^B\fraku^r$. In addition, the moment map $m$ is known to be proper birational from Proposition \ref{birational moment}. It follows that $G\times^B\fraku^r$ is a resolution of singularities for $C_r(\N)$ through the morphism $m$. We will see later in this section that $m$ is in fact a rational resolution. First we study geometric properties of $C_r(\N)$.  

\begin{proposition}\label{properties-of-C_r}
For every $r\ge 1$, we have
\begin{enumalph}
\item $C_r(\N)$ is an irreducible variety of dimension $r+1$.
\item The only singular point in $C_r(\N)$ is the origin $0$.
\end{enumalph}
\end{proposition}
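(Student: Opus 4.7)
The plan is to deduce (a) from the general results of Section \ref{background} together with the explicit form of $\fraku$, and to prove (b) via Proposition \ref{trivial singular} at the origin and a direct tangent-space computation at every other point. For (a): in $\fraksl_2$ the subalgebra $\fraku$ is one-dimensional and hence abelian, so $C_r(\fraku) = \fraku^r \cong \mathbb{A}^r$ is irreducible, and Theorem \ref{C(u)-and-C(N)} forces irreducibility of $C_r(\N)$. For the dimension, Proposition \ref{birational moment} supplies a birational proper morphism $m : G\times^B\fraku^r \to C_r(\N)$, whence
\[ \dim C_r(\N) = \dim(G\times^B\fraku^r) = \dim(G/B) + r\dim\fraku = 1+r. \]

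To prove (b), I first note that the ideal $I(C_r(\N)) \subseteq k[x_i, y_i, z_i]_{i=1}^r$ is homogeneous (because $C_r(\N)$ is a cone) and contains no nonzero linear form (because $\N$ spans $\fraksl_2$ as a vector space, so $C_r(\N)$ spans $\fraksl_2^r$). Hence $I(C_r(\N))$ is generated by homogeneous polynomials of degree $\ge 2$, and Proposition \ref{trivial singular} yields the singularity at $0$.

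For smoothness at nonzero points, exploit the $G$-invariance of the singular locus. By the argument in the proof of Theorem \ref{surjectivity of moment}, any nonzero tuple in $C_r(\N)$ can be $G$-conjugated so that its first nonzero coordinate lies in $\fraku = kf$; the remaining coordinates then commute with it and are nilpotent, so they lie in $z(f)\cap\N = kf$. It thus suffices to check smoothness at a representative $p = (\mu_1 f,\ldots,\mu_r f)$ with $(\mu_1,\ldots,\mu_r)\ne 0$, i.e.\ at $(x_i, y_i, z_i) = (0, 0, \mu_i)$. Linearizing the defining equations at $p$ yields the conditions
\[ \mu_i\, dy_i = 0,\qquad \mu_j\, dy_i = \mu_i\, dy_j,\qquad \mu_j\, dx_i = \mu_i\, dx_j, \]
with the minors $x_iy_j - x_jy_i$ linearizing to zero (since $x_i = y_i = 0$ at $p$) and no conditions imposed on $dz_i$. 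Fixing any $i_0$ with $\mu_{i_0}\ne 0$, these force all $dy_i = 0$ and express each $dx_j$ as a scalar multiple of $dx_{i_0}$, giving a tangent space of dimension $1 + r = \dim C_r(\N)$. Hence $p$ is smooth and (b) follows. The main obstacle is essentially clerical: verifying which minor relations linearize trivially at $p$ and confirming that the surviving linear system cuts the tangent space down to exactly $r+1$ dimensions.
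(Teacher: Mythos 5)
Your proof of part (a) runs along the same lines as the paper: surjectivity of the moment map (via Theorem \ref{C(u)-and-C(N)}) gives irreducibility, and birational properness of $m$ together with the vector-bundle structure of $G\times^B\fraku^r$ gives the dimension $r+1$.

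For part (b) you take a genuinely different route. The paper shows that every nonzero point of $C_r(\N)$ lies in the open subset $p^{-1}(\calO_{\reg})$ (the preimage of the regular orbit under a coordinate projection), and then argues that this open set is a single orbit under an auxiliary group action by $G$ times a torus, hence smooth of the right dimension. You instead use $G$-invariance of the singular locus, together with the conjugation argument from the surjectivity proof, to reduce to a single representative $p=(\mu_1 f,\ldots,\mu_r f)$, and then compute the Jacobian of the defining equations directly, observing that the kernel has dimension at most $r+1$. Since the tangent space of the variety is contained in the tangent space of the scheme cut out by the naive generators (the ideal may not be radical, but that only helps: $I\subseteq\sqrt{I}$ means fewer linear conditions), and the variety's tangent space is at least $r+1$-dimensional at every point of an irreducible $(r+1)$-dimensional variety, the bound forces equality. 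Your linearization of $x_i^2+y_iz_i$ and the minors is correct: at $p$ one has $x_i=y_i=0$, so the only surviving conditions are $\mu_i\,dy_i=0$, $\mu_j\,dy_i=\mu_i\,dy_j$, and $\mu_j\,dx_i=\mu_i\,dx_j$, which indeed give $1+r$ free parameters once some $\mu_{i_0}\ne 0$ is fixed. Your calculation is cleaner than the paper's orbit argument, which (as stated) has a small wrinkle in that $p^{-1}(\calO_{\reg})$ is not literally a single orbit of $G\times(\mathbb{G}_m)^{r-1}$ when some of the trailing coordinates vanish; your tangent-space computation avoids that issue entirely. Both approaches establish the result; yours trades a conceptual orbit picture for an elementary but fully explicit verification.
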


\begin{proof}$~$

(a) The first part follows immediately from the surjectivity of $m$ in \eqref{moment-sl2} and the fact that $G\times^B\fraku^r$ is irreducible. Now since $G\times^B\fraku^r$ is a vector bundle over the base $G/B$ with each fiber isomorphic to $\fraku^r$, we have 
\[ \dim G\times^B\fraku^r=\dim G/B+\dim\fraku^r=r+1.\] 
As %we have shown earlier in Proposition \ref{birational moment} that 
$m$ is birational, $C_r(\N)$ has the same dimension as $G\times^B\fraku^r$.

(b) By Proposition \ref{trivial singular} we immediately get that $0$ is a singular point of the variety. It is enough to show that every non-zero element in $C_r(\N)$ belongs to a smooth open subset of dimension $r+1$. Let $0\neq v=(v_1,\ldots,v_r)\in C_r(\N)$. We can assume that $v_1\ne 0\in\N$. Then considering the projection on the first factor $p:C_r(\N)\to\N$, we see that $v\in G\cdot(v_1,\fraku^{r-1})=p^{-1}(G\cdot v_1)$ which is open in $C_r(\N)$ as $G\cdot v_1$ is the regular orbit in $\N$. Now we define an action of the reductive group $G\times k^{r-1}$ on $C_r(\N)$ as follows:
\begin{align*}
\left(G\times k^{r-1}\right)\times C_r(\N) &\rightarrow C_r(\N)\\
(g\ ,\ a_1\ ,\ldots,\ a_{r-1})\bullet (v_1\ ,\ldots,\ v_r) &\longmapsto g\cdot(v_1\ ,\ a_1v_2\ ,\ldots,\ a_{r-1}v_r).
\end{align*}
It is easy to see that $p^{-1}(G\cdot v_1)=G\times k^{r-1}\bullet (v_1,v_1,\ldots,v_1)$. As every orbit is itself a smooth variety, we obtain $p^{-1}(G\cdot v_1)$ is smooth of dimension $r+1$.
\end{proof}

\subsection{Cohen-Macaulayness}
 
We denote by $\cap^*$ the scheme-theoretic intersection in order to distinguish with the regular intersection of varieties. Before showing that $C_r(\N)$ is Cohen-Macaulay, we need some lemmas related to Cohen-Macaulay varieties. The first one is an exercise in \cite[Exercise 18.13]{E:1995} (see also \cite[Lemma 5.15]{BV:1988}).
 
\begin{lemma}\label{Eisenbud}\footnote{In \cite{E:1995}, although the author did not state the word ``scheme-theoretic intersection", we implicitly understand from the exercise that the intersection needs to be scheme-theoretic.}
Let $X,Y$ be two Cohen-Macaulay varieties of the same dimension. Suppose the scheme-theoretic intersection $X\cap^* Y$ is of codimension 1 in both $X$ and $Y$. Then $X\cap^* Y$ is Cohen-Macaulay if and only if $X\cup Y$ is Cohen-Macaulay.
\end{lemma}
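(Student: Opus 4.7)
The plan is to reduce to a local commutative algebra statement and run a Mayer--Vietoris argument in local cohomology. Localize at any closed point $x\in X\cap^* Y$ and let $R=\mathcal{O}_{\A^n,x}$, with $I$ and $J$ the ideals of $X$ and $Y$ respectively. The coordinate ring of $X\cup Y$ at $x$ is $A=R/(I\cap J)$, and the coordinate ring of the scheme-theoretic intersection $X\cap^* Y$ is $R/(I+J)$. The starting point is the standard short exact sequence
\[
0\longrightarrow R/(I\cap J)\longrightarrow R/I\oplus R/J\longrightarrow R/(I+J)\longrightarrow 0,
\]
where the first map is $r\mapsto(r,r)$ and the second is $(a,b)\mapsto a-b$.

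From this sequence I would apply the local cohomology functor $H^\bullet_\mathfrak{m}(-)$, where $\mathfrak{m}$ is the maximal ideal of $R$, to obtain a long exact sequence connecting the local cohomologies of the four rings. Let $d=\dim X=\dim Y$, so by hypothesis $\dim(X\cap^* Y)=d-1$; note also that $\dim(X\cup Y)=d$. Since $X$ and $Y$ are Cohen--Macaulay of dimension $d$ at $x$, we have $H^i_\mathfrak{m}(R/I)=H^i_\mathfrak{m}(R/J)=0$ for all $i<d$. Plugging this vanishing into the long exact sequence yields the isomorphisms
\[
H^{i-1}_\mathfrak{m}\bigl(R/(I+J)\bigr)\;\cong\;H^{i}_\mathfrak{m}(A)\qquad\text{for all }i<d.
\]

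These isomorphisms translate the Cohen--Macaulay criterion directly: $A$ is Cohen--Macaulay at $x$ precisely when $H^i_\mathfrak{m}(A)=0$ for $i<d$, and by the displayed isomorphism this is equivalent to $H^j_\mathfrak{m}(R/(I+J))=0$ for $j<d-1$, which is the Cohen--Macaulay condition for $R/(I+J)$ (of dimension $d-1$). Running this equivalence at every closed point of $X\cap^* Y$, and noting that outside $X\cap^* Y$ the local rings of $X\cup Y$ coincide with those of $X$ or $Y$ individually and are therefore Cohen--Macaulay automatically, gives the biconditional.

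The main subtlety will be bookkeeping with the scheme-theoretic intersection: one must be careful that $I+J$ (rather than its radical) is what appears, since this is what makes the Mayer--Vietoris sequence exact on the right, and one must verify that the dimension of $R/(I+J)$ really is $d-1$ at each point of interest so that the Cohen--Macaulay criterion is applied at the correct cohomological degree. Apart from this, the argument is standard, and the two implications fall out symmetrically from the same long exact sequence.
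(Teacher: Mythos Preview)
Your Mayer--Vietoris argument in local cohomology is correct and is one of the standard routes to this result. Note, however, that the paper does not actually supply a proof of this lemma: it is simply quoted as Exercise~18.13 in \cite{E:1995} and Lemma~5.15 in \cite{BV:1988}. So strictly speaking there is no ``paper's own proof'' to compare against.

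That said, it is worth knowing that the references you are matching likely have a slightly more elementary argument in mind, namely the \emph{depth lemma} applied to the same short exact sequence
\[
0\longrightarrow R/(I\cap J)\longrightarrow R/I\oplus R/J\longrightarrow R/(I+J)\longrightarrow 0.
\]
Writing $A=R/(I\cap J)$ and using $\depth(R/I)=\depth(R/J)=d$ locally, the depth inequalities give
\[
\depth A\;\ge\;\min\bigl(d,\ \depth R/(I+J)+1\bigr),\qquad
\depth R/(I+J)\;\ge\;\min\bigl(d-1,\ \depth A\bigr),
\]
from which the biconditional follows immediately once the local dimensions are $d$ and $d-1$. This is equivalent to what you wrote (the depth lemma is just the long exact sequence in $\Ext$ or local cohomology read off in terms of depth), but avoids invoking local cohomology explicitly and is the form you will find in \cite{BV:1988}. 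Your version has the advantage of making the isomorphisms $H^{i-1}_\mathfrak{m}(R/(I+J))\cong H^i_\mathfrak{m}(A)$ visible, which is conceptually cleaner.

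The subtlety you flag about local dimensions is genuine and is exactly where one must be careful in either approach; in the applications in the paper the pieces are irreducible, so equidimensionality is automatic and the issue does not arise.
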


The following lemmas involve properties about radical ideals that determine whether a scheme-theoretic intersection $\cap^*$ coincides with the intersection of varieties $\cap$.

\begin{lemma}\label{lem of radical ideal}
Let $I\triangleleft k[x_1,\ldots,x_m]$ be the radical ideal associated to a variety $V$ in $\A^m$. Then the variety $V\times 0\subset\A^m\times\A^n$ is represented by the ideal $I+\left<y_1,\ldots,y_n\right>\triangleleft k[x_1,\ldots,x_r,y_1,\ldots,y_n]$.
\end{lemma}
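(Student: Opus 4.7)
The plan is to prove both that the zero locus of the ideal $J := I + \langle y_1, \ldots, y_n \rangle$ equals $V \times \{0\}$, and that $J$ is radical; together these show $J = I(V \times \{0\})$.

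First I would verify the set-theoretic claim. A point $(a,b) \in \A^m \times \A^n$ lies in the zero locus of $J$ precisely when every element of $I$ vanishes at $a$ and every $y_j$ vanishes at $b$, i.e.\ when $a \in V$ and $b = 0$. So the vanishing set of $J$ is $V \times \{0\}$.

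The substantive step is to show $J$ is radical. For this I would exhibit an explicit isomorphism
\[
k[x_1,\ldots,x_m,y_1,\ldots,y_n]/J \;\cong\; k[x_1,\ldots,x_m]/I.
\]
Consider the composition $\varphi: k[x_1,\ldots,x_m,y_1,\ldots,y_n] \twoheadrightarrow k[x_1,\ldots,x_m] \twoheadrightarrow k[x_1,\ldots,x_m]/I$, where the first map sends each $y_j$ to $0$ and fixes the $x_i$. This $\varphi$ is surjective, and its kernel clearly contains both $I$ (viewed inside the larger polynomial ring) and each $y_j$, hence contains $J$. Conversely, any $f \in \ker \varphi$ can be written, after collecting monomials in the $y_j$'s, as $f = f_0(x) + g(x,y)$ with $g \in \langle y_1,\ldots,y_n\rangle$ and $f_0(x) \in I$, so $f \in J$. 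Thus $\ker \varphi = J$, giving the desired isomorphism.

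Since $I$ is radical, the ring $k[x_1,\ldots,x_m]/I$ is reduced, and hence so is $k[x_1,\ldots,x_m,y_1,\ldots,y_n]/J$. Therefore $J$ is a radical ideal whose vanishing locus is $V \times \{0\}$, and by the Nullstellensatz it must be the radical ideal $I(V \times \{0\})$. There is no real obstacle here, as the isomorphism above is elementary; the only care needed is checking that the kernel of $\varphi$ is exactly $J$ (and not merely contains it), which is the straightforward decomposition argument above.
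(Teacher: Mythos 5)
Your proof is correct and takes essentially the same route as the paper: establish the isomorphism $k[x_1,\ldots,x_m,y_1,\ldots,y_n]/(I+\langle y_1,\ldots,y_n\rangle) \cong k[x_1,\ldots,x_m]/I$ and conclude the ideal is radical because $I$ is. You simply spell out the kernel computation and the Nullstellensatz step that the paper leaves implicit.
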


\begin{proof}
It is easy to see that we have an isomorphism of rings
\[ \frac{k[x_1,\ldots,x_r,y_1,\ldots,y_n]}{I+\left<y_1,\ldots,y_n\right>}\cong\frac{k[x_1,\ldots,x_r]}{I} \]
where the latter ring is reduced. The result immediately follows.
\end{proof}

\begin{lemma}\label{radical-properties}
Let $I_1,I_2$ be radical ideals of $k[x_1,\ldots,x_m]$. Let $J_1,J_2$ be ideals of $R=k[x_1,\ldots,x_m,y_1,\ldots,y_n]$. If $I_2\subseteq I_1$ and $J_1\subseteq J_2$, then we have
\[ \sqrt{I_1+J_1}+\sqrt{I_2+J_2}=I_1+J_2 \]
provided $I_1+J_2$ is a radical ideal of $R$.

In particular, suppose $V_1\subseteq V_2$ are varieties of $\A^m$ and $W$ is a variety of $\A^n$. Then we have
\[ \left(V_1\times W\right) \cap^* \left( V_2\times 0\right) =\left( V_1\times W\right) \cap\left( V_2\times 0\right)= V_1\times 0. \]
\end{lemma}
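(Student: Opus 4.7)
The strategy is to dispose of the ideal-theoretic identity by a direct double-inclusion argument, and then deduce the geometric statement as an immediate specialization together with Lemma \ref{lem of radical ideal}.

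For the containment $\sqrt{I_1+J_1}+\sqrt{I_2+J_2} \supseteq I_1+J_2$, I would simply note that $I_1 \subseteq I_1+J_1 \subseteq \sqrt{I_1+J_1}$ and $J_2 \subseteq I_2+J_2 \subseteq \sqrt{I_2+J_2}$, so $I_1+J_2$ lies in the sum. For the reverse inclusion, the hypotheses $J_1 \subseteq J_2$ and $I_2 \subseteq I_1$ yield $I_1+J_1 \subseteq I_1+J_2$ and $I_2+J_2 \subseteq I_1+J_2$; taking radicals and invoking the hypothesis that $I_1+J_2$ is already radical gives $\sqrt{I_1+J_1} \subseteq I_1+J_2$ and $\sqrt{I_2+J_2} \subseteq I_1+J_2$, and summing completes the inclusion.

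For the second assertion (which tacitly assumes $0 \in W$, without which the set-theoretic intersection would be empty), I would specialize with $I_1 := I(V_1)$, $I_2 := I(V_2)$, $J_1 := I(W)\cdot R$, and $J_2 := \langle y_1,\ldots,y_n\rangle$. The three hypotheses of the first part are then verified as follows: $V_1 \subseteq V_2$ gives $I_2 \subseteq I_1$; the assumption $0 \in W$ gives $I(W) \subseteq \langle y_1,\ldots,y_n\rangle$, i.e.\ $J_1 \subseteq J_2$; and by Lemma \ref{lem of radical ideal}, $I_1+J_2$ is the radical defining ideal of $V_1\times 0$. The Nullstellensatz identifies $\sqrt{I_1+J_1}$ and $\sqrt{I_2+J_2}$ with the defining ideals of $V_1\times W$ and $V_2\times 0$ respectively, so the first part says that the ideal defining the scheme-theoretic intersection $(V_1\times W) \cap^* (V_2\times 0)$, namely the sum of these two ideals, equals the radical ideal of $V_1\times 0$. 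Radicality of this sum then forces the scheme-theoretic intersection to coincide with the set-theoretic intersection, and both equal $V_1\times 0$.

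I do not anticipate any serious obstacle: the argument is a routine double inclusion together with Lemma \ref{lem of radical ideal} and the Nullstellensatz. The only subtlety worth flagging is the implicit assumption $0 \in W$, which should be made explicit; without it the inclusion $J_1 \subseteq J_2$ can fail and the rightmost equality in the geometric chain breaks down.
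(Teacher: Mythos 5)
Your proof is correct and follows essentially the same route as the paper: a straightforward double inclusion for the ideal identity, followed by a specialization via Lemma \ref{lem of radical ideal} and the Nullstellensatz for the geometric claim. One genuine value-add: the paper's proof of the second part merely writes ``$I(V_1\times W)=\sqrt{I_1+J_1}$ with $I_1=I(V_1)$ and $J_1$ is an ideal of $R$'' and then asserts ``It can be seen that $J_1\subseteq J_2$'' without ever pinning down $J_1$ or saying why the containment holds. By taking $J_1 := I(W)\cdot R$ explicitly, you make visible that $J_1\subseteq J_2=\langle y_1,\ldots,y_n\rangle$ amounts to $0\in W$, which is indeed an unstated hypothesis of the lemma (and is satisfied in every application the paper makes of it). So your proposal is correct, matches the paper's strategy, and tightens a small gap in the paper's exposition.
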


\begin{proof}
It is well-known that $\sqrt{I_1+J_1}+\sqrt{I_2+J_2}\subseteq \sqrt{I_1+J_2}=I_1+J_2$. On the other hand, $I_1+J_2\subseteq \sqrt{I_1+J_1}+\sqrt{I_2+J_2}$. This shows the first part of the lemma.

For the remainder, let $I(V_1\times W)=\sqrt{I_1+J_1}$ with $I_1=I(V_1)$ and $J_1$ is an ideal of $R$. Let $I(V_2\times 0)=\sqrt{I_2+J_2}$ where $I_2=I(V_2), J_2=\left<y_1,\ldots,y_n\right>$ by the preceding lemma. Then we have $I_1+J_2$ is radical again by the previous lemma. It can be seen that $J_1\subseteq J_2$ as an ideal of $R$. Hence the first statement of the lemma implies that
\[  V_1\times W\cap^* V_2\times 0=\spec\frac{R}{I_1+J_2}=V_1\times 0. \]
\end{proof}

For each $r\ge 1$, the variety $C_r(\N)$ is defined as the zero locus of the family of polynomials  
\[ \{x_i^2+y_iz_i \ ,\ x_iy_j-x_jy_i\ ,\ x_iz_j-x_jz_i\ ,\ y_iz_j-y_jz_i~|~1\le i\le j\le r\}\]
in $R=k[\fraksl_2^r]=k[x_i,y_i,z_i~|~1\le i\le r]$. It is easy to check by computer that this ideal is not radical. This causes some difficulties for us to investigate the algebraic properties of this variety like Cohen-Macaulayness. Let $I_r$ be the radical ideal generated by the family of polynomials above in $R$. We first reduce the problem to checking a certain condition in commutative algebra.
  
\begin{lemma}\label{my lemma}
If $I_s+\left<y_1+z_1\right>$ is a radical ideal for each $s\ge 1$, then $C_r(\N)$ is Cohen-Macaulay for each $r\ge 1$. 
\end{lemma}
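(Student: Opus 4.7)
My plan is to proceed by induction on $r$. The base case $r=1$ is immediate, since $C_1(\N)=\N$ is known to be Cohen-Macaulay. For the inductive step, assume $C_s(\N)$ is Cohen-Macaulay for all $s<r$. Because $C_r(\N)$ is irreducible by Proposition \ref{properties-of-C_r}, the ring $R/I_r$ is an integral domain; since $y_1+z_1$ does not lie in $I_r$, it is a non-zerodivisor there. Hence $R/I_r$ is Cohen-Macaulay if and only if $R/(I_r+\langle y_1+z_1\rangle)$ is, and by the hypothesis at $s=r$ the latter equals the reduced coordinate ring $k[W_r]$ of the variety $W_r:=C_r(\N)\cap V(y_1+z_1)$.

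The next step is to decompose $W_r$ geometrically. On $W_r$ the relation $y_1+z_1=0$ combined with $x_1^2+y_1z_1=0$ gives $(x_1-z_1)(x_1+z_1)=0$, so every point satisfies either $x_1=z_1$ or $x_1=-z_1$. Using that the centralizer in $\fraksl_2$ of any nonzero nilpotent is one-dimensional (as $p\ne 2$), a case analysis yields the set-theoretic decomposition
\[
W_r \;=\; L \cup L' \cup W,
\]
where $L=\{(c_1 e,\ldots,c_r e):c_i\in k\}\cong\A^r$, $L'=\{(c_1 e',\ldots,c_r e'):c_i\in k\}\cong\A^r$ for the two representative nilpotents $e=\bigl(\begin{smallmatrix}1&-1\\1&-1\end{smallmatrix}\bigr)$ and $e'=\bigl(\begin{smallmatrix}-1&-1\\1&1\end{smallmatrix}\bigr)$, and $W=\{0\}\times C_{r-1}(\N)$. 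All three are irreducible of dimension $r$, and $W\cong C_{r-1}(\N)$ is Cohen-Macaulay by the inductive hypothesis.

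I would then apply Lemma \ref{Eisenbud} twice. First, with $X=L'$ and $Y=W$: a routine verification shows that each generator of the ideal defining $C_{r-1}(\N)$ on the indices $i\ge 2$ lies in the linear ideal $\langle x_i-y_i,x_i+z_i:i\ge 2\rangle$ cutting out $L'$, so $I(L')+I(W)$ collapses to the prime ideal $\langle x_1,y_1,z_1,x_i-y_i,x_i+z_i:i\ge 2\rangle$, identifying $L'\cap^* W$ with the affine space $L'\cap W\cong\A^{r-1}$. Lemma \ref{Eisenbud} then gives that $L'\cup W$ is Cohen-Macaulay. Second, with $X=L$ and $Y=L'\cup W$, both Cohen-Macaulay of dimension $r$, and set-theoretically $L\cap(L'\cup W)=L\cap W$ since $L\cap L'=\{0\}\subset L\cap W\cong\A^{r-1}$.

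The main obstacle will be showing that the scheme intersection $L\cap^*(L'\cup W)$ is also reduced; equivalently, that $I(L)+\bigl(I(L')\cap I(W)\bigr)=I(L\cap W)$. The containment $\subseteq$ follows from $I(L')\cap I(W)\subseteq I(W)$ together with the equality $I(L)+I(W)=I(L\cap W)$ proved by the same type of calculation used in the first application. For $\supseteq$, I would write each generator of $I(L\cap W)$ as a sum of an element of $I(L)$ and an element of $I(L')\cap I(W)$; for instance,
\[
x_1 \;=\; \tfrac{1}{2}(x_1+y_1)+\tfrac{1}{2}(x_1-y_1),
\]
where $x_1+y_1\in I(L)$ and $x_1-y_1$ lies in both $I(L')$ (as a generator) and $I(W)$ (since $x_1,y_1\in I(W)$); analogous identities handle $y_1$ and $z_1$, while $x_i+y_i$ and $x_i-z_i$ for $i\ge 2$ lie in $I(L)$ directly. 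Once this is verified, Lemma \ref{Eisenbud} yields that $W_r$ is Cohen-Macaulay, and hence so is $C_r(\N)$, completing the induction.
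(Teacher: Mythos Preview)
Your proof is correct and follows essentially the same approach as the paper: induct on $r$, reduce via the nonzerodivisor $y_1+z_1$ to the hyperplane section, decompose it into the same three pieces (your $W,L',L$ are the paper's $V_1,V_2,V_3$), and apply Lemma~\ref{Eisenbud} twice in the same order. The only notable difference is that you verify the reducedness of the scheme-theoretic intersections by explicitly checking that the relevant generators lie in the appropriate linear ideals and by splitting $x_1,y_1,z_1$ using $p\neq 2$, whereas the paper invokes Lemma~\ref{radical-properties} for the first intersection and is somewhat terse about the second; your treatment of the second intersection $L\cap^*(L'\cup W)$ is in fact more detailed than the paper's.
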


\begin{proof}
We argue by induction on $r$. When $r=1$, $C_1(\N)=\N$, which is a well-known Cohen-Macaulay variety. Suppose that $C_{r-1}(\N)$ is Cohen-Macaulay for some $r\ge 2$. As we have seen earlier $C_r(\N)$ is irreducible of dimension $r+1$ and $(y_1+z_1)$ is not zero in the coordinate ring $\frac{R}{I_r}$. The hypothesis and Lemma 5.15 in \cite{BV:1988} imply that it suffices to show the variety $C_r(\N)\cap V(y_1+z_1)$ is Cohen-Macaulay of dimension $r$.

Let $V=C_r(\N)\cap V(y_1+z_1)$. Solving from the constraint $x_1^2+y_1z_1=0$, we have either $x_1=y_1=-z_1$ or $x_1=-y_1=z_1$. Then we decompose $V=V_1\cup V_2\cup V_3$ where the $V_i$ are irreducible algebraic varieties defined by
\begin{align*}
V_1 &= V\cap V(x_1=y_1=z_1=0), \\
V_2 &= \overline{V\cap V(x_1=y_1=-z_1\ne 0)},\\
V_3 &= \overline{V\cap V(x_1=-y_1=z_1\ne 0)}.
\end{align*}
Moreover, we can explicitly describe these varieties as follows: 
\begin{align*}
V_1 &= 0\times 0\times 0\times C_{r-1}(\N)\\
V_2 &= \overline{\{(x_1\ ,\ y_1\ ,\ z_1\ ,\ x_2,\ \frac{x_2}{x_1}y_1\ ,\ \frac{x_2}{x_1}z_1\ ,\ldots,\ x_{r}\ ,\ \frac{x_{r}}{x_1}y_1\ , \frac{x_{r}}{x_1}z_1)~|~0\ne x_1=y_1=-z_1\in k\}} \\
&= \{(x_1\ ,\ x_1\ ,\ -x_1\ ,\ x_2\ ,\ x_2\ ,\ -x_2\ ,\ldots,\ x_{r}\ ,\ x_{r}\ ,\ -x_{r})~|~ x_i\in k\}, \\
V_3 &= \overline{\{(x_1\ ,\ y_1\ ,\ z_1\ ,\ x_2\ ,\frac{x_2}{x_1}y_1\ ,\frac{x_2}{x_1}z_1\ ,\ldots,\ x_{r+1}\ ,\frac{x_{r}}{x_1}y_1\ , \frac{x_{r}}{x_1}z_1)~|~0\ne x_1=-y_1=z_1\in k\}}\\
&= \{(x_1\ ,\ -x_1\ ,\ x_1\ ,\ x_2\ ,\ -x_2\ ,\ x_2\ ,\ldots,\ x_{r}\ ,\ -x_{r}\ ,\ x_{r})~|~ x_i\in k\}.
\end{align*}

Observe that $V_1$ is Cohen-Macaulay of dimension $r$ by the inductive hypothesis, and $V_2$ and $V_3$ are affine $r$-spaces so they are Cohen-Macaulay. From Lemma \ref{radical-properties}, we have
%\begin{align*}
%\{( x_2 , x_2 , -x_2 ,\ldots, x_{r} , x_{r} , -x_{r}) &\mid x_i\in k\}\cap^* C_{r-1}(\N) = \\
%&=\{( x_2 , x_2 , -x_2 ,\ldots, x_{r} , x_{r} , -x_{r})~|~ x_i\in k\}\cap C_{r-1}(\N) \\
%&=\{( x_2 , x_2 , -x_2 ,\ldots, x_{r} , x_{r} , -x_{r})~|~ x_i\in k\}.
%\end{align*}
the scheme-theoretic intersection
\[V_1\cap^* V_2=\{(0 , 0 , 0 , x_2 , x_2 , -x_2 ,\ldots, x_{r} , x_{r} , -x_{r})~|~ x_i\in k,~2\le i\le r\}\]
which is an affine $(r-1)$-space. Hence by Lemma \ref{Eisenbud} the union $V_1\cup V_2$ is a Cohen-Macaulay variety of dimension $r$. Next we consider the scheme-theoretic intersection $(V_1\cup V_2)\cap^* V_3$. Note that $V_2\cap^* V_3=\{0\}$, we have
\[ (V_1\cup V_2)\cap^* V_3 = V_1\cap^* V_3 =\{(0 , 0 , 0 , x_2 , -x_2 , x_2 ,\ldots, x_{r} , -x_{r} , x_{r})~|~ x_i\in k\} \]
for the same reason as earlier. Then again Lemma \ref{Eisenbud} implies that $V_1\cup V_2\cup V_3$ is Cohen-Macaulay of dimension $r$.
\end{proof}

We are now interested in the conditions under which the sum of two radical ideals is again radical. One of the well-known concepts in commutative algebra related to this problem is that of a principal radical system introduced by Hochster, which shows certain class of ideals are radical.

\begin{theorem}\cite[Theorem 12.1]{BV:1988}\label{Hochster}
Let $A$ be a Noetherian ring, and let $F$ be a family of ideals in $A$, partially ordered by inclusion. Suppose that for every member $I \in F$ one of the following assumptions is fulfilled:
\begin{enumalph}
\item $I$ is a radical ideal; or
\item There exists an element $x \in A$ such that $I + Ax \in F$ and
\begin{itemize}
\item $x$ is not a zero-divisor $\frac{A}{\sqrt{I}}$ and $\bigcap_{i=0}^\infty (I + Ax^i)/I = 0$, or
\item there exists an ideal $J \in F$ with $I\subsetneq J$, such that $xJ \subseteq I$ and $x$ is not a zero-divisor of $\frac{A}{\sqrt{J}}$.
\end{itemize}
\end{enumalph}
Then all the ideals in $F$ are radical ideals.
\end{theorem}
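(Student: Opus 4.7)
The plan is a Noetherian induction argument aimed at a contradiction. Assuming some member of $F$ fails to be radical, the Noetherian hypothesis on $A$ ensures that the subfamily of non-radical members has a maximal element $I$; every member of $F$ strictly containing $I$ is then automatically radical. Since $I$ itself is not radical, alternative (b) must hold, yielding an element $x \in A$ with $I + Ax \in F$ satisfying one of the two subconditions. In either subcase, the relevant non-zero-divisor hypothesis forces $x \notin \sqrt{I}$ (otherwise $x$ would be a zero-divisor modulo $\sqrt{I}$ or $\sqrt{J}$, collapsing the setup), so $x \notin I$ and $I \subsetneq I + Ax$. By maximality, $I + Ax$ is therefore radical, and this fact will be the key leverage in both subcases.

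For the first subcondition, I would prove by induction on $n \ge 1$ that $\sqrt{I} \subseteq I + Ax^n$. Combined with the hypothesis $\bigcap_{n}(I + Ax^n) = I$, this yields $\sqrt{I} \subseteq I$, contradicting the non-radicality of $I$. The base case $n = 1$ is immediate since $\sqrt{I} \subseteq \sqrt{I + Ax} = I + Ax$. For the inductive step, given $y \in \sqrt{I}$ written as $y = i + ax^n$ with $i \in I$, I observe that $ax^n = y - i \in \sqrt{I}$; since $x$ is a non-zero-divisor modulo $\sqrt{I}$, so is each power $x^n$, and hence $a \in \sqrt{I}$. Applying the base case to $a$ gives $a = i' + bx$ for some $i' \in I$ and $b \in A$, and substituting back produces $y = (i + i'x^n) + bx^{n+1} \in I + Ax^{n+1}$.

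For the second subcondition, the maximality of $I$ forces the strictly larger ideal $J \in F$ to be radical, so $J = \sqrt{J}$. Given $y \in \sqrt{I}$, I have $y \in \sqrt{I} \subseteq J$ and, since $I + Ax$ is radical, $y \in I + Ax$, so write $y = i + ax$ with $i \in I$. Then $ax = y - i \in \sqrt{I} \subseteq J$, and the non-zero-divisor hypothesis on $A/J$ yields $a \in J$, so $ax \in xJ \subseteq I$ and $y = i + ax \in I$, again contradicting the non-radicality of $I$. The main obstacle I anticipate is not the central inductive move but rather the careful handling of the borderline possibilities $x \in I$, $\sqrt{I} = A$, or $J = A$: one must exploit the non-zero-divisor conditions to rule these out so that the strict containment $I \subsetneq I + Ax$ truly holds and maximality can be invoked to conclude that $I + Ax$ (and $J$) are radical.
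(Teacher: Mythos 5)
The paper states this result only as a citation to Bruns--Vetter and gives no proof of its own, so I am comparing your argument against the standard proof of the principal radical system lemma, which it reproduces faithfully. The Noetherian-induction framing (choose $I$ maximal among non-radical members of $F$, so everything strictly larger is radical), the induction $\sqrt{I}\subseteq I+Ax^n$ in subcase (i) followed by intersecting with $\bigcap_n (I+Ax^n)=I$, and the chain $ax\in J\Rightarrow a\in J\Rightarrow ax\in xJ\subseteq I$ in subcase (ii) are exactly the intended steps, and each is carried out correctly.

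The one place your plan does not fully close is the degenerate possibility you yourself flag, $J=A$. Your strategy is to show that the non-zero-divisor hypotheses force $x\notin\sqrt{I}$, so that $I\subsetneq I+Ax$ and maximality applies. In subcase (i) this is airtight: $\sqrt{I}=A$ would make $I=A$ radical, so $A/\sqrt{I}\neq 0$ and $x\in\sqrt{I}$ would make $\bar x=0$ a zero-divisor. In subcase (ii), however, the hypothesis concerns $A/\sqrt{J}$, and if $J=A$ that ring is zero; under the usual convention ("$a$ is a zero-divisor if $ab=0$ for some $b\neq 0$") the condition is then vacuously true and gives you nothing, while $xJ\subseteq I$ forces $x\in I$, so $I+Ax=I$ and maximality cannot be invoked. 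Concretely, $A=k[t]$, $I=(t^2)$, $F=\{(t^2),A\}$, $x=t^2$, $J=A$ satisfies every hypothesis as literally transcribed here, yet $I$ is not radical. This is a loophole in the transcription of the statement rather than a defect in your core argument: one must either require $J$ (equivalently $\sqrt{J}$) to be a proper ideal, or adopt the convention that no element is regular on the zero module, and then your claim that the non-zero-divisor condition rules out $x\in\sqrt{I}$ goes through. With that one clarification made explicit, the proof is complete and agrees with the cited source's approach.
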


Such a family of ideals is called a principal radical system. This concept plays an important role in the proof of Hochster and Eagon showing that determinantal rings are Cohen-Macaulay \cite{HE:1971}. Before applying this theorem, we need to set up some notations.

Fix $r\ge 1$. For each $1\le m\le r$, let $I_m$ be the radical ideal associated to the variety $0\times\cdots\times 0\times C_{m}(\N)\subseteq C_r(\N)$ with $0\in\N$. Each ideal $I_m$ is prime by the irreducibility of $C_m(\N)$ and it is easy to see that 
\[ I_m=I_r+\sum_{j=1}^{r-m}\left<x_j , y_j , z_j\right>. \] 
We also let, for each $1\le m\le r$, 
\[ P_m=\sum_{i=1}^m\left<x_i , y_i , z_i\right>+\sum_{j=m+1}^r\left<x_j-y_j\ ,\  y_j+z_j\right>. \]
Note that each $P_m$ is a prime ideal since $R/P_m$ is isomorphic to $k[x_{m+1},\ldots,x_r]$. Now we consider the following family of ideals in $R$
\begin{align*}
\calF= & \{I_j\}_{j=1}^r ~\cup ~ \{P_j\}_{j=1}^r ~\cup ~  \{I_r+\sum_{i=1}^m\left<y_i+z_i\right>\}_{m=1}^r  \\
& \cup \{I_r+\sum_{i=1}^r\left<y_i+z_i\right>+\sum_{i=1}^n\left<x_j+y_j\right>\}_{n=1}^r ~\cup ~ \mathfrak{m}=\left<x_1 , y_1 , z_1 ,\ldots, x_r , y_r , z_r\right>.
\end{align*}

\begin{proposition}\label{radical system}
The family $\calF$ is a principal radical system.
\end{proposition}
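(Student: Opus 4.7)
The plan is a direct application of the principal radical system criterion, Theorem \ref{Hochster}. Several members of $\calF$ are automatically radical because they are prime: each $I_j$ defines the irreducible variety $0\times\cdots\times 0\times C_j(\N)$ by Proposition \ref{properties-of-C_r}; each $P_j$ has quotient $R/P_j \cong k[x_{j+1},\ldots,x_r]$, a domain; and $\mathfrak{m}$ is maximal. These all fall under clause (a) of the theorem.

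For the two remaining subfamilies, write $L_m = I_r + \sum_{i=1}^m \langle y_i+z_i\rangle$ and $K_n = L_r + \sum_{j=1}^n \langle x_j+y_j\rangle$, and invoke clause (b) for each of them. For $m<r$ I would take the principal element $x=y_{m+1}+z_{m+1}$, so that $L_m + Rx = L_{m+1} \in \calF$; analogously for $K_n \rightsquigarrow K_{n+1}$ use $x=x_{n+1}+y_{n+1}$. The top ideal $K_r$ defines the linear subspace $\{(x_j,y_j,z_j)=(t_j,-t_j,t_j) : 1\le j\le r\}$ and is itself prime, so it too is handled by clause (a).

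Two things must be checked for each application of (b): that $x$ is a nonzerodivisor modulo $\sqrt{I}$, and that the Krull-type condition $\bigcap_{i\geq 0}(I+Rx^i)/I = 0$ holds (or the fallback alternative in clause (b)). For the nonzerodivisor condition I would enumerate the minimal primes of $I$ geometrically, iterating the three-way decomposition used in the proof of Lemma \ref{my lemma} once per generator $y_i+z_i$ or $x_j+y_j$ already adjoined. The components arising are of $V_1$-type (defining ideal an $I_s$), $V_2$-type (defining ideal a $P_s$), or $V_3$-type (defining ideal a $K_n$), all of which already lie in $\calF$. On each such component the next linear form $x$ restricts to a nonzero function, so $x$ escapes every minimal prime. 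The Krull intersection condition follows from homogeneity: $R$ is standard-graded, both $I$ and $x$ are homogeneous of degree one, so $\bigcap_i (I+Rx^i)/I$ sits in arbitrarily high degree and therefore vanishes.

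The main obstacle is certifying the enumeration of minimal primes cleanly. The subfamilies $\{I_j\}, \{P_j\}$ and $\{K_n\}$ are designed precisely so that every component arising in the iterative decomposition of $V(L_m)$ or $V(K_n)$ is defined by some member of $\calF$; one must verify that no stray component is missed and that the partial order on $\calF$ used in Theorem \ref{Hochster} is compatible with this bookkeeping. I expect the transitions at $L_r = K_0$ (where the first ``$x_j+y_j$''-type constraint enters) and at $K_r$ (where the chain terminates just below $\mathfrak{m}$) to require the most care; at those boundary steps one may need to invoke the second sub-option of clause (b), taking for $J$ the defining prime of whichever component causes $x$ to become a zero-divisor.
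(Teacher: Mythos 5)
Your plan to apply the \emph{first} alternative of Theorem \ref{Hochster}(b) — ``$x$ is a nonzerodivisor modulo $\sqrt{I}$ plus the Krull intersection condition'' — fails already at the first nontrivial member of $\calF$. Take $I = I_r + \langle y_1+z_1\rangle$ and $x = y_2+z_2$. By the decomposition in the proof of Lemma \ref{my lemma}, the reduced vanishing locus of $I$ is $V_1\cup V_2\cup V_3$, with $V_2 = \{(t_j,t_j,-t_j)\}$ and $V_3 = \{(t_j,-t_j,t_j)\}$. On both diagonal components every $y_j+z_j$ is identically zero, so $y_2+z_2$ lies in $I(V_2)\cap I(V_3)$ and is therefore a zero-divisor in $R/\sqrt{I}$. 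Your claim that ``the next linear form $x$ restricts to a nonzero function'' on each component is simply false for $V_2$ and $V_3$. The same obstruction recurs at every step of the chain $L_m \rightsquigarrow L_{m+1}$ (and likewise in the $K_n$ chain), not just at ``boundary steps'': the diagonal components persist once you have cut by any single $y_i+z_i$. So the first alternative of clause (b) is never available here.

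The paper instead uses the \emph{second} alternative of (b) for every non-prime member of $\calF$, and your sketch of how to choose $J$ at a step where (b) is needed is backwards. You suggest taking $J$ to be ``the defining prime of whichever component causes $x$ to become a zero-divisor''; but the hypothesis requires $x$ to be a nonzerodivisor modulo $\sqrt{J}$, so $J$ must be the minimal prime of a component on which $x$ does \emph{not} vanish. Concretely, for $I = I_r + \sum_{i=1}^m\langle y_i+z_i\rangle$ the paper takes $J = I_{r-m}$, the prime of $0^m\times C_{r-m}(\N)$, on which $y_{m+1}+z_{m+1}$ is a nonzero function; the genuinely nontrivial content is then the inclusion $(y_{m+1}+z_{m+1})\,I_{r-m}\subseteq I$, which is checked generator-by-generator in the Appendix. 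The analogous choice for the other chain is $J = P_n$. Your write-up omits entirely this multiplication verification, which is the heart of the argument, and the homogeneity remark about the Krull intersection is moot once the nonzerodivisor hypothesis has already failed.
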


\begin{proof}
It is obvious that $ \{I_j\}_{j=1}^r $, $ \{P_j\}_{j=1}^r$, and $\mathfrak{m}$ are radical. So we just have to consider the two following cases:
\begin{enumalph}
\item $I=I_r+\sum_{i=1}^m\left<y_i+z_i\right>$ for some $1\le m\le r-1$. Observe that $I+\left<y_{m+1}+z_{m+1}\right>$ is an element in $\calF$. Let $J=I_{r-m}$. It is easy to see  $y_{m+1}+z_{m+1}\notin I_{r-m}$ so that $y_{m+1}+z_{m+1}$ is not a zero-divisor in the domain $R/\sqrt{J}=R/I_{r-m}$. It remains to show that $(y_{m+1}+z_{m+1})J\subseteq I$. Recall that $I_{r-m}=I_r+\sum_{j=1}^{m}\left<x_j,y_j,z_j\right>$. Then it suffices to prove that
\begin{align*}
\left(y_{m+1}+z_{m+1}\right)x_j \in I, \\
\left(y_{m+1}+z_{m+1}\right)y_j \in I, \\
\left(y_{m+1}+z_{m+1}\right)z_j \in I
\end{align*}
for all $1\le j\le m$. This is done in the Appendix \ref{checking 1}.

\item $I=I_r+\sum_{i=1}^r\left<y_i+z_i\right>+\sum_{i=1}^n\left<x_j+y_j\right>$ for some $0\le n\le r-1$, where if $n=0$, we set $I=I_r+\sum_{i=1}^r\left<y_i+z_i\right>$. It is clear that $I+\left<x_{n+1}+y_{n+1}\right>\in\calF$. Choose $J=P_{n}$, then the same argument as in the previous case gives us $x_{n+1}+y_{n+1}$ is not a zero-divisor of $R/\sqrt{J}$. We also refer the reader to the Appendix \ref{checking 2} for the proof of $(x_{n+1}+y_{n+1})J\subseteq I$. 
\end{enumalph} 
\end{proof}

Here is the main result of this section.

\begin{theorem}\label{Cohen-Macaulay of C_r(N(sl_2))}
For each $r\ge 1$, the variety $C_r(\N)$ is Cohen-Macaulay and therefore normal.
\end{theorem}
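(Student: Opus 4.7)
The plan is to combine the two main pieces of machinery already set up in this section: the reduction step in Lemma~\ref{my lemma} and the principal-radical-system argument in Proposition~\ref{radical system}. First, I would apply Hochster's principle (Theorem~\ref{Hochster}) to the family $\calF$. By Proposition~\ref{radical system}, every member of $\calF$ satisfies either condition (a) or condition (b) of Theorem~\ref{Hochster}, so every ideal in $\calF$ is a radical ideal. In particular, for each $s\ge 1$ the ideal $I_s+\langle y_1+z_1\rangle$ lies in $\calF$ (it corresponds to the case $m=1$ of the third family appearing in $\calF$, with $r$ replaced by $s$), and hence is radical.

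Having established radicality, I would invoke Lemma~\ref{my lemma} directly: the hypothesis of that lemma is exactly that $I_s+\langle y_1+z_1\rangle$ be radical for every $s\ge 1$. The lemma then delivers the Cohen-Macaulayness of $C_r(\N)$ for every $r\ge 1$.

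For the normality claim, I would use Serre's criterion (R1)+(S2). Since $C_r(\N)$ is Cohen-Macaulay, it automatically satisfies (S2), so only the regularity in codimension one needs to be checked. This, however, is already in hand from Proposition~\ref{properties-of-C_r}: the variety $C_r(\N)$ is irreducible of dimension $r+1$, and its singular locus consists of the single point $0$, which has codimension $r+1\ge 2$ in $C_r(\N)$ for every $r\ge 1$. Thus (R1) holds, and Serre's criterion yields normality.

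The main obstacle in this argument is not the final assembly but the verification of the principal-radical-system conditions for $\calF$ — specifically, confirming in each of the two nontrivial cases of Proposition~\ref{radical system} that the multiplication by $y_{m+1}+z_{m+1}$ (respectively $x_{n+1}+y_{n+1}$) carries the larger ideal $J$ into $I$. This is a genuinely combinatorial calculation inside $R=k[x_i,y_i,z_i]$ using the commuting relations together with the added linear relations $y_i+z_i=0$ (and later $x_j+y_j=0$), and it is relegated to the Appendix. Once this containment is granted, the chain of implications \emph{principal radical system} $\Rightarrow$ \emph{radicality of $I_s+\langle y_1+z_1\rangle$} $\Rightarrow$ \emph{Cohen-Macaulayness via Lemma~\ref{my lemma}} $\Rightarrow$ \emph{normality via Serre} is entirely routine.
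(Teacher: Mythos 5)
Your proposal is correct and takes essentially the same route as the paper: Proposition~\ref{radical system} plus Theorem~\ref{Hochster} give radicality of $I_s+\langle y_1+z_1\rangle$, and Lemma~\ref{my lemma} then yields Cohen-Macaulayness. You are slightly more explicit than the paper on the ``therefore normal'' step, correctly supplying Serre's criterion together with Proposition~\ref{properties-of-C_r}(b) (singular locus $=\{0\}$, codimension $r+1\ge 2$), which is the implicit justification the paper's one-line proof leaves to the reader.
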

\begin{proof}
It immediately follows from Lemma \ref{my lemma} and Proposition \ref{radical system}.
\end{proof}

Now we can summarize our results into a theorem as we stated in Section \ref{main results}.

\begin{theorem}
Let $\mathfrak{gl}_2$ and $\fraksl_2$ be Lie algebras defined over $k$ of characteristic $p\ne 2$. Then for each $r\ge 1$, we have the commuting varieties $C_r(\mathfrak{gl}_2), C_r(\fraksl_2),$ and $C_r(\N)$ are irreducible, normal, and Cohen-Macaulay. 
\end{theorem}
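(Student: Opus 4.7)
The statement is a summary theorem that collects the three main results already established individually in Sections \ref{Comvar of 2 by 2} and \ref{nilpotent commuting variety}, so the plan is essentially to assemble the appropriate references rather than to argue anything new.

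First I would dispense with $C_r(\fraksl_2)$ by citing Proposition \ref{normal-of-C_r(sl)}: the defining ideal of $C_r(\fraksl_2)$ in $k[\fraksl_2^r]$ equals the ideal $I_2(\mathcal{X})$ of $2$-minors of the $3\times r$ generic matrix, so the coordinate ring is the determinantal ring $R_2(\mathcal{X})$, which by Proposition \ref{determinantal rings} is an irreducible, Cohen-Macaulay, normal domain of dimension $r+2$.

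Next, the case of $C_r(\mathfrak{gl}_2)$ reduces immediately to the previous one: since $p\ne 2$, Theorem \ref{isomorphism of comm vars} (applied with $n=2$) furnishes an explicit isomorphism of varieties
\[ C_r(\mathfrak{gl}_2) \cong C_r(\fraksl_2)\times \A^r, \]
and irreducibility, Cohen-Macaulayness and normality are all preserved under products with affine space (and in particular under this isomorphism), giving Corollary \ref{normality of C_r(M_2)}.

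Finally, for $C_r(\N)$ with $\N$ the nilpotent cone of $\fraksl_2$, I would invoke Proposition \ref{properties-of-C_r}(a) for irreducibility (it is of dimension $r+1$ by that proposition) and then Theorem \ref{Cohen-Macaulay of C_r(N(sl_2))} for Cohen-Macaulayness; normality then follows automatically, since a Cohen-Macaulay variety satisfies Serre's condition $(S_2)$ and the only singular point is the origin by Proposition \ref{properties-of-C_r}(b), which is in codimension $r+1\ge 2$ (for $r\ge 1$ one instead notes that Cohen-Macaulay plus $(R_1)$ gives normality, and $(R_1)$ follows from Proposition \ref{properties-of-C_r}(b) when $r\ge 1$; for $r=1$ the variety is $\N$ itself, which is well known to be normal). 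There is no real obstacle here — the work has already been done — the only thing to be slightly careful about is the appeal to Serre's criterion in the $C_r(\N)$ case, which needs the singular locus to have codimension at least $2$; this is immediate from Proposition \ref{properties-of-C_r}(b) since the singular locus is a single point in a variety of dimension $r+1\ge 2$.
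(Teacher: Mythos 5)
Your proposal is correct and follows essentially the same route as the paper: the theorem is a summary statement, and the paper intends the reader to collect Proposition \ref{normal-of-C_r(sl)}, Corollary \ref{normality of C_r(M_2)} (via Theorem \ref{isomorphism of comm vars}), Proposition \ref{properties-of-C_r}, and Theorem \ref{Cohen-Macaulay of C_r(N(sl_2))}, exactly as you do. The only thing you add is spelling out the Serre-criterion step (Cohen-Macaulay gives $(S_2)$, and $(R_1)$ holds because the singular locus of $C_r(\N)$ is just the origin, which has codimension $r+1\ge 2$), which the paper leaves implicit in the phrase ``and therefore normal''; your parenthetical worrying about $r=1$ separately is unnecessary since $r+1\ge 2$ already for $r=1$.
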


\begin{remark}
We claim that the theorem above holds even when the field $k$ is not algebraically closed. Indeed, all the proofs in the last section do not depend on the algebraic closedness of $k$ since the theory of determinantal rings does not require it. There should be a way to avoid Nullstellensatz's Theorem (hence algebraic closedness is not necessary) in the arguments in this section. 
\end{remark}

\subsection{Rational singularities}
We prove in this section that the moment map $$m:G\times^B\fraku^r\rightarrow C_r(\N)$$ admits rational singularities. Since $m$ is already a resolution of singularities, it is equivalent to show the following.

\begin{proposition}\label{rational resolution}
$~$
\begin{enumalph}
\item $\calO_{C_r(\N)}=m_*\calO_{G\times^B\fraku^r}$.
\item The higher direct image $R^im_*(\calO_{G\times^B\fraku^r})=0$ for $i>0$. Hence $G\times^B\fraku^r$ is a rational resolution of $C_r(\N)$ via $m$.  
\end{enumalph}
\end{proposition}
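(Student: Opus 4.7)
The plan is to handle (a) by a direct appeal to Zariski's Main Theorem, and then to deduce (b) by reducing $R^im_*\calO_Y$ to cohomology on $G/B\cong\mathbb{P}^1$ that vanishes by Kempf's theorem.

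For (a) I would simply apply Theorem \ref{Zariski} to $m$: the required properness and birationality are supplied by Proposition \ref{birational moment}, while normality of the target is the content of Theorem \ref{Cohen-Macaulay of C_r(N(sl_2))}. These hypotheses immediately give $m_*\calO_{G\times^B\fraku^r}=\calO_{C_r(\N)}$.

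For (b), set $Y=G\times^B\fraku^r$ and $X=C_r(\N)$. Since $X$ is affine (a closed subvariety of $\fraksl_2^r$) and $m$ is proper, each $R^im_*\calO_Y$ is the quasi-coherent sheaf associated to the $k[X]$-module $\opH^i(Y,\calO_Y)$, so it suffices to show $\opH^i(Y,\calO_Y)=0$ for $i>0$. I would then exploit the fact that the bundle projection $\pi:Y\to G/B$ is affine (indeed a vector bundle of rank $r$): higher direct images $R^j\pi_*\calO_Y$ then vanish for $j>0$, and the Leray spectral sequence collapses to
\[ \opH^i(Y,\calO_Y)\;\cong\;\opH^i\bigl(G/B,\pi_*\calO_Y\bigr). \]
The pushforward $\pi_*\calO_Y$ is the $G$-equivariant sheaf on $G/B$ corresponding to the $B$-module $\mathrm{Sym}((\fraku^r)^*)$.

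Next I would decompose this pushforward into weight pieces. Because $B$ acts on $\fraku=k\cdot f$ with weight $-\alpha$, it acts on $(\fraku^r)^*$ as $r$ copies of the character $\alpha$, and so $\mathrm{Sym}^d((\fraku^r)^*)$ is a direct sum of $\binom{d+r-1}{r-1}$ copies of the one-dimensional $B$-module $k_{d\alpha}$. Hence
\[ \opH^i(Y,\calO_Y)\;\cong\;\bigoplus_{d\ge 0}\opH^i\bigl(G/B,\,G\times^B k_{d\alpha}\bigr)^{\binom{d+r-1}{r-1}}. \]
Since $G/B\cong\mathbb{P}^1$ and the normalization $T(G/B)=G\times^B k_\alpha\cong\calO(2)$ forces $G\times^B k_{d\alpha}\cong\calO_{\mathbb{P}^1}(2d)$, the higher cohomology of every summand vanishes for $d\ge 0$. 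Equivalently, each $d\alpha$ is a dominant weight for $\fraksl_2$, so Kempf's vanishing theorem (valid in all characteristics) applies. Summing over $d$ yields $\opH^i(Y,\calO_Y)=0$ for $i>0$, which establishes (b).

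The only real content lies in (b), and even there the obstacle is modest: one needs every weight occurring in $\mathrm{Sym}((\fraku^r)^*)$ to be dominant. For $\fraksl_2$ this is automatic, because those weights are non-negative multiples of the unique positive root. The sole bookkeeping point is the sign convention for $G\times^B k_\lambda$ relative to the negative Borel, which is fixed once and for all by the identification $G\times^B k_\alpha\cong T(G/B)\cong\calO(2)$ arising from the fact that $\mathfrak{g}/\mathfrak{b}$ is the positive-root line.
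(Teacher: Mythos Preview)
Your proof is correct and follows essentially the same route as the paper: part (a) is Zariski's Main Theorem combined with the normality of $C_r(\N)$, and part (b) reduces $R^im_*\calO_Y$ to $\opH^i(G/B,\mathrm{Sym}^\bullet((\fraku^r)^*))$, decomposes this into one-dimensional $B$-modules of dominant weight $d\alpha$, and invokes Kempf's vanishing. The only cosmetic differences are that you spell out the Leray collapse via affineness of $\pi$ and the identification $G\times^B k_{d\alpha}\cong\calO_{\mathbb{P}^1}(2d)$, whereas the paper writes the same cohomology groups directly as $R^i\ind_B^G S^j(\fraku^{*r})$ and records the multiplicity via a partition count rather than the binomial $\binom{d+r-1}{r-1}$.
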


\begin{proof}
\begin{enumalph}
\item This follows from Theorem \ref{Cohen-Macaulay of C_r(N(sl_2))}, and Zariski's Main Theorem \ref{Zariski}.

\item By \cite[Proposition 8.5]{H:1977}, we have $R^im_*(\calO_{G\times^B\fraku^r})\cong\calL\left(\opH^i(G\times^B\fraku^r,\calO_{G\times^B\fraku^r})\right)$ for each $i\ge 0$ (as we pointed out in the end of Section \ref{algebraic geometry conventions}). Note that $\calL(k)=\calO_{G\times^B\fraku^r}$, so we have
\[ \opH^i(G\times^B\fraku^r,\calO_{G\times^B\fraku^r})\cong\bigoplus_{j=0}^\infty\opH^i(G/B,\calL_{G/B}S^j(\fraku^{r*}))=\bigoplus_{j=0}^\infty R^i\ind_B^G(S^j(\fraku^{*r})). \]
As we are assuming that $\fraku$ is a one-dimensional space of weight corresponding to the negative root $-\alpha$, $S^j(\fraku^{*r})$ can be considered as the direct sum $\left(k_{j\alpha}\right)^{\oplus P_r(j)}$, where $P_r(j)$ is the number of ways getting $r$ as a sum of $j$ non-negative numbers. This weight is dominant, so by Kempf's vanishing theorem we obtain $R^i\ind_B^G(S^j(\fraku^{*r}))=0$ for all $i>0$ and $j\ge 0$.
It follows that
\[ R^im_*(\calO_{G\times^B\fraku^r})=0 \]
for all $i\ge 1$.
\end{enumalph}
\end{proof}
Now we state the result of this section.
\begin{theorem}
The moment map $m:G\times^B\fraku^r\rightarrow C_r(\N)$ is a rational resolution of singularities.
\end{theorem}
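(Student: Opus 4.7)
The plan is to assemble the pieces already established in the paper and verify that $m$ meets each requirement in the definition of a rational resolution. Recall that, by the definition given earlier, $m$ being a rational resolution of $C_r(\N)$ amounts to three conditions: that $m$ is a resolution of singularities of $C_r(\N)$, that the target $C_r(\N)$ is normal, and that $R^im_*\calO_{G\times^B\fraku^r} = 0$ for every $i \geq 1$.

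First I would note that $m$ is a resolution of singularities in the sense of the earlier definition: the source $G\times^B\fraku^r$ is smooth, being the total space of a rank-$r$ vector bundle over the smooth projective variety $G/B$, and $m$ is proper and birational by Proposition \ref{birational moment}. For the normality of $C_r(\N)$, I would invoke Theorem \ref{Cohen-Macaulay of C_r(N(sl_2))}, which asserts Cohen-Macaulayness and normality via the principal radical system argument. The vanishing of the higher direct images is precisely Proposition \ref{rational resolution}(b). Putting these three facts together, $m$ satisfies the full definition of a rational resolution of singularities, and the theorem follows. (The complementary identification $m_*\calO_{G\times^B\fraku^r} = \calO_{C_r(\N)}$ in Proposition \ref{rational resolution}(a), though not formally required by the definition, is a consistency check that normality plus Zariski's Main Theorem \ref{Zariski} delivers automatically.)

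The main obstacle, already surmounted in the preceding sections, is twofold. The harder of the two is the normality of $C_r(\N)$: this rests on the principal radical system construction and a careful dissection of the intersection of $C_r(\N)$ with the hypersurface $V(y_1+z_1)$, which was the technical heart of this section. The second is the cohomology vanishing feeding Proposition \ref{rational resolution}(b), which exploits the very special fact that, for $\fraksl_2$, $\fraku$ is one-dimensional, so $S^j(\fraku^{*r})$ decomposes as a direct sum of copies of the one-dimensional $B$-module $k_{j\alpha}$ of dominant weight, allowing a direct appeal to Kempf's vanishing theorem. Once both of these inputs are in hand, the final assembly into the theorem is essentially formal.
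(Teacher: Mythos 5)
Your proposal is correct and follows essentially the same route as the paper: the theorem is stated as a direct consequence of Proposition~\ref{rational resolution} (which supplies the identification of structure sheaves and the vanishing of higher direct images), together with the smoothness of $G\times^B\fraku^r$, the birational properness from Proposition~\ref{birational moment}, and the normality of $C_r(\N)$ from Theorem~\ref{Cohen-Macaulay of C_r(N(sl_2))}. Your assembly of these ingredients and your identification of the two genuine technical inputs (the principal radical system argument and the Kempf vanishing computation) match the paper's logic.
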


\subsection{Applications}\label{characters of coordinate alg of comm var for sl_2}
As a corollary, we establish the connection with the reduced cohomology ring of $G_r$.  This gives an alternative proof for the main result in \cite{BFS1:1997} for the special case $G=SL_2$.

\begin{theorem}\label{coordinate-alg}
For each $r>0$, there is a $G$-equivariant isomorphism of algebras 
$$k[G\times^B\fraku^r]\cong k[C_r(\N)].$$
Consequently, there is a homeomorphism between ${\rm{Spec}}\opH^\bullet(G_r,k)_{\red}$ and $C_r(\N)$.
\end{theorem}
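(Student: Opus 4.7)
The plan is to deduce this theorem directly from Proposition \ref{rational resolution}(a), combined with the explicit computation of $\opH^\bullet(G_r,k)_{\red}$ from \cite{N:2012}.

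First I would establish the algebra isomorphism. Since $C_r(\N)$ is affine, taking global sections of the sheaf identity $m_*\calO_{G\times^B\fraku^r}=\calO_{C_r(\N)}$ furnished by Proposition \ref{rational resolution}(a) yields
\[
k[C_r(\N)]=\Gamma(C_r(\N),\calO_{C_r(\N)})=\Gamma(C_r(\N),m_*\calO_{G\times^B\fraku^r})=\Gamma(G\times^B\fraku^r,\calO_{G\times^B\fraku^r})=k[G\times^B\fraku^r].
\]
The moment morphism $m$ is $G$-equivariant for the left-translation action of $G$ on the first factor of $G\times^B\fraku^r$ and the diagonal adjoint action on $C_r(\N)\subseteq\N^r$, so the induced map on global sections is a $G$-equivariant algebra isomorphism, using the identification $k[G\times^B\fraku^r]\cong k[G\times\fraku^r]^B$ recorded in Section \ref{algebraic geometry conventions}.

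For the homeomorphism, I would invoke the explicit calculation in \cite{N:2012} of $\opH^\bullet(G_r,k)_{\red}$ for $G=SL_2$, which identifies this reduced cohomology ring $G$-equivariantly with $k[G\times^B\fraku^r]$. Composing with the isomorphism of the first step produces a $G$-equivariant algebra isomorphism $\opH^\bullet(G_r,k)_{\red}\cong k[C_r(\N)]$. Since both sides are reduced, finitely generated $k$-algebras, applying $\spec$ (or equivalently $\Maxspec$) gives a homeomorphism, in fact an isomorphism of affine varieties, between $\spec\opH^\bullet(G_r,k)_{\red}$ and $C_r(\N)$; this is the promised alternative proof of the Suslin-Friedlander-Bendel result in the special case $G=SL_2$.

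The substantive content has already been done in earlier sections: the Cohen-Macaulayness and normality from Theorem \ref{Cohen-Macaulay of C_r(N(sl_2))}, together with the birational properness of $m$ from Proposition \ref{birational moment}, is exactly what permitted Zariski's Main Theorem to deliver the sheaf-level identity. Modulo the citation to \cite{N:2012}, no real obstacle remains; the only delicate bookkeeping is to confirm that the $G$-action on $\opH^\bullet(G_r,k)_{\red}$ matches the natural $G$-action on $k[G\times^B\fraku^r]$.
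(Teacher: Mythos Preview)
Your proposal is correct and follows essentially the same approach as the paper: the algebra isomorphism is obtained from Proposition \ref{rational resolution}(a) together with the $G$-equivariance of $m$, and the homeomorphism is then deduced by combining this with the computation in \cite{N:2012}. Your write-up is simply a more detailed unpacking of the paper's short argument (in particular, you make explicit the passage from the sheaf identity to global sections), but there is no substantive difference in method.
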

\begin{proof}
Note that the moment map $m$ is $G$-equivariant. This implies that the comorphism $m^*$ is compatible with the $G$-action. The isomorphism follows from part (a) of Proposition \ref{rational resolution}. Combining this observation with \cite[Proposition 5.2.4]{N:2012}, we have the homeomorphism between the spectrum of the reduced $G_r$-cohomology ring and $C_r(\N)$. 
\end{proof}

Now we want to describe the characters for the coordinate algebra $k[C_r(\N)]$. Before doing that, we need to introduce some notation. Here we follow the convention in \cite{Jan:2004}.

Let $V$ be a graded vector space over $k$, i.e., $V=\bigoplus_{i\ge 0}V_n$, where each $V_n$ is finite dimensional. Set
\[ \ch_tV=\sum_{n\ge 0}\ch(V_n) t^n \]
the character series of $V$.

From the isomorphism in Theorem \ref{coordinate-alg}, the character series for the coordinate algebra of $C_r(\N)$ can be computed via that of $k[G\times^B\fraku^r]$: 
\begin{equation}\label{character}
\ch_t\left(k[C_r(\N)]\right)=\ch_t\left(k[G\times^B\fraku^r]\right).
\end{equation}

\begin{theorem}
For each $r>0$, we have 
\begin{equation}\label{char}
\ch_tk[C_r(\N)]=\sum_{n\ge 0}\sum_{a_1+\cdots+a_r=n}\chi(n\alpha)t^n.
\end{equation}
where the latter sum is taken over all $r$-tuple $(a_1,\ldots,a_r)$ of non-negative integers satisfying $$a_1+\cdots+a_r=n.$$
\end{theorem}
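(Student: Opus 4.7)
The plan is to translate the character computation through the isomorphism $k[C_r(\N)] \cong k[G\times^B\fraku^r]$ established in Theorem \ref{coordinate-alg}, and then exploit the fact that in type $A_1$ the nilpotent radical $\fraku$ is one-dimensional, which makes the $B$-module structure of the symmetric powers $S^n(\fraku^{*r})$ completely transparent.

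First I would observe that, via the identification $k[G\times^B\fraku^r] \cong \opH^0(G/B, \calL_{G/B}(k[\fraku^r]))$ used in the proof of Proposition~\ref{rational resolution}, the grading on $k[C_r(\N)]$ by degree of polynomials on $\fraku^r$ produces a decomposition
\[
\ch_t k[C_r(\N)] \;=\; \sum_{n\ge 0}\ch\bigl(\ind_B^G S^n(\fraku^{*r})\bigr)\, t^n.
\]
Second, since $G=SL_2$, the unipotent radical $\fraku$ is one-dimensional of weight $-\alpha$, so $\fraku^{*r}$ is an $r$-dimensional $B$-module on which $T$ acts by the single weight $\alpha$. A standard monomial basis of $S^n(\fraku^{*r})$ is then indexed by $r$-tuples $(a_1,\ldots,a_r)$ of non-negative integers with $a_1+\cdots+a_r=n$, and each such monomial has $T$-weight $n\alpha$. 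Hence as a $B$-module
\[
S^n(\fraku^{*r}) \;\cong\; \bigoplus_{a_1+\cdots+a_r=n} k_{n\alpha}.
\]

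Third, applying the exact functor $\ind_B^G$ termwise yields $\ind_B^G S^n(\fraku^{*r}) \cong \opH^0(n\alpha)^{\oplus P_r(n)}$ where $P_r(n)$ denotes the number of $r$-tuples above. Since $n\alpha$ is dominant, Kempf's vanishing theorem (as invoked in the proof of Proposition~\ref{rational resolution}) gives $R^i\ind_B^G(k_{n\alpha})=0$ for $i>0$, so the Euler characteristic reduces to the $\opH^0$ character, i.e.\ $\chi(n\alpha)=\ch \opH^0(n\alpha)$. Substituting these identities into the displayed formula for $\ch_t k[C_r(\N)]$ gives exactly
\[
\ch_t k[C_r(\N)] \;=\; \sum_{n\ge 0}\;\sum_{a_1+\cdots+a_r=n}\chi(n\alpha)\, t^n,
\]
as required.

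There is essentially no hard step here: the non-trivial input (the isomorphism of coordinate algebras, which rests on the Cohen--Macaulayness and normality of $C_r(\N)$ together with Zariski's Main Theorem, and the rational-resolution property) has already been done in Theorem~\ref{coordinate-alg} and Proposition~\ref{rational resolution}. The only minor care needed is bookkeeping: matching the polynomial degree grading on $k[\fraku^r]$ to the weight grading (trivial because $\dim\fraku=1$) and correctly identifying $P_r(n)$ as the stated combinatorial count.
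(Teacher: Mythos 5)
Your proposal is correct and follows essentially the same route as the paper's proof: both pass through the isomorphism $k[C_r(\N)]\cong k[G\times^B\fraku^r]$ from Theorem \ref{coordinate-alg}, reduce the graded character to $\ch\opH^0(G/B,S^n(\fraku^{*r}))$, invoke the higher-cohomology vanishing from Proposition \ref{rational resolution} to replace $\ch\opH^0$ by the Euler characteristic $\chi$, and then count the $P_r(n)$ copies of weight $n\alpha$ in $S^n(\fraku^{*r})$ using the one-dimensionality of $\fraku$. The only cosmetic difference is that you identify the weight decomposition of $S^n(\fraku^{*r})$ directly via a monomial basis, while the paper reaches the same count by first applying the decomposition $S^n\bigl((\fraku^*)^{\oplus r}\bigr)\cong\bigoplus_{a_1+\cdots+a_r=n}S^{a_1}(\fraku^*)\otimes\cdots\otimes S^{a_r}(\fraku^*)$.
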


\begin{proof}
From \cite[8.11(4)]{Jan:2004}, $k[G\times^B\fraku^r]$ is a graded $G$-algebra and for each $n\ge 0$,
\[ \ch k[G\times^B\fraku^r]_n = \ch\opH^0(G/B,S^n(\fraku^{r*})). \]
The argument in Proposition \ref{rational resolution} gives us  
\[ \opH^i(G/B,S^n(\fraku^{*r}))=0\]
for all $i>0,n\ge 0$. Hence, it follows by \cite[8.14(6)]{Jan:2004} that
\[ \chi(S^n(\fraku^{*r})) = \ch\opH^0(G/B,S^n(\fraku^{*r})) \]
for each $n\ge 0$. Here we recall that for each finite-dimensional $B$-module $M$, the Euler characteristic of $M$ is defined as
\[ \chi(M)= \sum_{i\ge 0}(-1)^i\ch\opH^i(G/B, M). \]
Then we write $\ch(S^n(\fraku^*))=e(n\alpha)$; hence for each $n$ we have
\begin{align*}
\ch(S^n(\fraku^{r*})) &=\ch(S^n(\fraku^*)^{\otimes r})\\
&=\ch\left(\sum_{a_1+\cdots+a_r=n}S^{a_1}(\fraku^*)\otimes\cdots\otimes S^{a_r}(\fraku^*)\right)\\
&=\sum_{a_1+\cdots+a_r=n}\ch\left(S^{a_1}(\fraku^*)\otimes\cdots\otimes S^{a_r}(\fraku^*)\right)\\
&=\sum_{a_1+\cdots+a_r=n} e(n\alpha).
\end{align*}
Thus by the linear property of $\chi$, we obtain
\begin{align*}
\chi\left(S^n(\fraku^{*r})\right) =\sum_{a_1+\cdots+a_r=n}\chi( e(n\alpha))=\sum_{a_1+\cdots+a_r=n}\chi(n\alpha).
\end{align*}
Combining all the formulas, we get
\[ \ch_tk[G\times^B\fraku^r] =\sum_{n\ge 0}\ch\left(k[G\times^B\fraku^r]_n\right)t^n =\sum_{n\ge 0}\sum_{a_1+\cdots+a_r=n}\chi(n\alpha)t^n. \]
Therefore, the identity \eqref{character} completes our proof.
\end{proof}

\begin{remark}
The above formula can be made more specific by making use of partition functions. Let $S$ be the set of $r$ numbers $a_1,\ldots,a_r\in\mathbb{N}$. Set $P_S:\mathbb{N}\rightarrow\mathbb{N}$ with $P_S(m)$ the number of ways getting $m$ as a linear combination of elements in $S$ with non negative coefficients. This is called a {\it partition function}. Kostant used this notation with $S=\Pi=\{\alpha_1,\ldots,\alpha_n\}$, the set of simple roots, to express the number of ways of writing a weight $\lambda$ as a linear combination of simple roots, see \cite{SB:1984} for more details. In our case, set $S=\{\alpha_1=\cdots=\alpha_r=1\}$ and denote $P_r=P_S$. Then from \eqref{char} we can have
\[ \ch k[C_r(\N)]_n=P_r(n)\chi(n\alpha).\]
\end{remark}
As a consequence, we obtain a result on the multiplicity of $\opH^0(\lambda)$ in $k[C_r(\N)]$.
\begin{corollary}
For each $\lambda=m\alpha\in X^+$, we have 
\[ \left[k[C_r(\N)]:\opH^0(\lambda)\right]=P_r(m).\]
\end{corollary}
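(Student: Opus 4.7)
The plan is to identify each graded piece of $k[C_r(\N)]$ explicitly as a direct sum of induced modules, and then read off the multiplicity directly. I would start by invoking the isomorphism $k[C_r(\N)] \cong k[G\times^B\fraku^r]$ from Theorem \ref{coordinate-alg}, whose $n$-th graded component is $\opH^0(G/B, S^n(\fraku^{*r}))$. Then, as already observed in the proof of Proposition \ref{rational resolution}, the fact that $\fraku$ is one-dimensional of weight $-\alpha$ forces $\fraku^{*r}$ to be $r$-dimensional with every vector of weight $\alpha$, so $S^n(\fraku^{*r})$ is concentrated in the single weight $n\alpha$ with dimension $\binom{n+r-1}{r-1}=P_r(n)$. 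As a $B$-module this gives
\[ S^n(\fraku^{*r}) \cong k_{n\alpha}^{\oplus P_r(n)}. \]

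Applying $\ind_B^G$ and invoking Kempf vanishing (since $n\alpha$ is dominant, the higher cohomology vanishes, as noted in Proposition \ref{rational resolution}), I obtain an isomorphism of $G$-modules
\[ k[C_r(\N)]_n \cong \opH^0(n\alpha)^{\oplus P_r(n)}. \]
Summing over $n$ exhibits $k[C_r(\N)]$ as a direct sum of induced modules, hence in particular as a module with a good filtration. The final step is the standard orthogonality $\Hom_G(V(m\alpha), \opH^0(n\alpha)) = \delta_{mn}\,k$, where $V(m\alpha)$ denotes the Weyl module: this tells us that $\opH^0(m\alpha)$ appears only in the graded piece $k[C_r(\N)]_m$, and does so with multiplicity exactly $P_r(m)$. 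Therefore $[k[C_r(\N)] : \opH^0(\lambda)] = P_r(m)$.

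There is no serious obstacle here, since the preceding theorem has already done essentially all the work at the level of characters. The one subtlety worth noting is that the corollary concerns multiplicity in a good filtration, not mere character equality; but this is automatic from the explicit decomposition above, since each graded piece is already a direct sum of costandard modules. In fact, the argument shows the stronger statement that $k[C_r(\N)]$ decomposes as a direct sum (not merely a filtration) of induced modules $\opH^0(n\alpha)$.
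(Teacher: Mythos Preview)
Your proof is correct and rests on the same key observation the paper already made in Proposition~\ref{rational resolution}, namely that $S^n(\fraku^{*r})\cong k_{n\alpha}^{\oplus P_r(n)}$ as a $B$-module. The paper itself gives no separate argument for the corollary: it simply reads off the multiplicity from the character formula $\ch k[C_r(\N)]_n=P_r(n)\chi(n\alpha)$, and then remarks that this yields a good filtration. Your route is a mild upgrade of that deduction: instead of passing through characters and invoking linear independence of the $\chi(\lambda)$, you work directly at the module level and obtain the isomorphism $k[C_r(\N)]_n\cong \opH^0(n\alpha)^{\oplus P_r(n)}$, from which the multiplicity and the good filtration (indeed, a direct-sum decomposition) are immediate. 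One small comment: Kempf vanishing is not actually needed for the $\opH^0$ identification---induction is additive on finite direct sums regardless---so you may drop that remark.
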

This result also shows that the coordinate algebra of $C_r(\N)$ has a good filtration as a $G$-module.

\section{The Mixed Cases}\label{mixed cases}
\subsection{} Now we turn our concerns into more complicated commuting varieties. Let $V_1,\ldots, V_r$ be closed subvarieties of a Lie algebra $\g$. Define
\[ C(V_1,\ldots, V_r)=\{(v_1,\ldots, v_r)\in V_1\times\cdots\times V_r~|~[v_i,v_j]=0~,~1\le i\le j\le r \}, \]   
a mixed commuting variety over $V_1\times\cdots\times V_r$. It is obvious that when $V_1=\cdots=V_r$, this variety coincides with the commuting variety over $V_1$. We apply in this section our calculations from previous sections to study mixed commuting varieties over $\fraksl_2$ and its nilpotent cone $\N$.

We assume $k$ is an algebraically closed field of characteristic $p\ne 2$. Set
\[ C_{i,j}=C(\underbrace{\N,\ldots,\N}_{i~ \text{times}},\underbrace{\fraksl_2,\ldots,\fraksl_2}_{j~ \text{times}}) \]
with $i,j\ge 1$, a mixed commuting variety over $\fraksl_2$ and $\N$. Note that if $j=0$, then $C_{i,0}=C_i(\N)$. Otherwise, one gets $C_j(\fraksl_2)$ if $i=0$. By permuting the tuples, this variety is isomorphic to any mixed commuting variety in which $\N$ appears $i$ times and $\fraksl_2$ appears $j$ times. Hence we consider $C_{i,j}$ a representative of such varieties. The following are some basic properties.

\begin{proposition}\label{prop:C_i,j_decomposition}
For each $i,j\ge 1$, we have: 
\begin{enumalph}
\item The variety $C_{i,j}$ is reducible. Moreover, we have $C_{i,j}=C_{i+j}(\N)~\cup~ 0\times\ldots\times 0\times C_j(\fraksl_2)$.
\item $\dim C_{i,j}=i+j+1.$
\item $C_{i,j}$ is not normal. It is Cohen-Macaulay if and only if $i=1$. 
\end{enumalph}
\end{proposition}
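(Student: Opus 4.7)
The plan is as follows. For part (a), the key observation is that in $\fraksl_2$ every nonzero nilpotent element $e$ is regular, so its centralizer $z(e)$ has dimension one; since $e\in z(e)$, we get $z(e)=k\cdot e\subseteq\N$. Consequently, given $(v_1,\ldots,v_i,w_1,\ldots,w_j)\in C_{i,j}$, either some $v_k$ is nonzero, in which case every other entry of the tuple must lie in $z(v_k)\subseteq\N$ so the entire tuple belongs to $C_{i+j}(\N)$; or all $v_k=0$, placing the tuple in $0\times\cdots\times 0\times C_j(\fraksl_2)$. Each piece is irreducible by Proposition \ref{properties-of-C_r}(a) and Proposition \ref{normal-of-C_r(sl)}(a) respectively, and neither is contained in the other (the first contains tuples with nonzero nilpotent first entry; the second contains tuples with non-nilpotent entries), so they are precisely the irreducible components of $C_{i,j}$, proving reducibility.

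For (b), I will combine the known dimensions $\dim C_{i+j}(\N)=i+j+1$ from Proposition \ref{properties-of-C_r}(a) and $\dim(0\times\cdots\times 0\times C_j(\fraksl_2))=j+2$ from Proposition \ref{normal-of-C_r(sl)}(a); the assumption $i\ge 1$ ensures the former dominates, giving $\dim C_{i,j}=i+j+1$.

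For (c), the origin lies in both irreducible components, so $C_{i,j}$ is connected but reducible; since a normal Noetherian affine variety is a finite disjoint union of irreducible normal pieces, $C_{i,j}$ cannot be normal. When $i\ge 2$ the two components have distinct dimensions $i+j+1$ and $j+2$, so $C_{i,j}$ is not equidimensional and therefore not Cohen-Macaulay. For $i=1$, both components have dimension $j+2$, and I will invoke Lemma \ref{Eisenbud} applied to $X=C_{j+1}(\N)$ and $Y=0\times C_j(\fraksl_2)$, which are Cohen-Macaulay by Theorem \ref{Cohen-Macaulay of C_r(N(sl_2))} and Proposition \ref{normal-of-C_r(sl)}(b). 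Since $\langle x_1,y_1,z_1\rangle\subseteq I(Y)$ and the defining relations of $C_j(\N)$ in the remaining variables are already contained in $I(X)$, one expects $I(X)+I(Y)=\langle x_1,y_1,z_1\rangle+I(C_j(\N))=I(0\times C_j(\N))$, a radical ideal; hence the scheme-theoretic intersection $X\cap^* Y$ coincides with $0\times C_j(\N)$, which is Cohen-Macaulay of dimension $j+1$ by Theorem \ref{Cohen-Macaulay of C_r(N(sl_2))} and of codimension one in each component. Lemma \ref{Eisenbud} will then yield that $C_{1,j}$ is Cohen-Macaulay.

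The main obstacle will be the verification in the $i=1$ case that the scheme-theoretic intersection is actually reduced: although $I(X)$ is taken as the radical ideal of $C_{j+1}(\N)$, the sum with $I(Y)$ must be shown to be radical, which boils down to exhibiting the containment $I(C_j(\N))\subseteq I(C_{j+1}(\N))$ (trivial from the fact that forgetting the first slot sends $C_{j+1}(\N)$ into $C_j(\N)$) and then applying Lemma \ref{radical-properties} to combine this with the ideal $\langle x_1,y_1,z_1\rangle$. This is the only step where the non-radical presentation of $C_r(\N)$ could cause trouble, and once it is cleared the rest of (c) follows immediately.
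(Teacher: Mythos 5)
Your proposal is correct and follows essentially the same route as the paper: the decomposition $C_{i,j}=C_{i+j}(\N)\cup 0\times\cdots\times 0\times C_j(\fraksl_2)$ via the observation that a nonzero nilpotent in $\fraksl_2$ has its centralizer inside $\N$, the dimension count, non-normality from the overlapping components, non-equidimensionality for $i\ge 2$, and Lemma \ref{Eisenbud} together with Lemma \ref{radical-properties} to handle $i=1$. One small imprecision: the containment that drives Lemma \ref{radical-properties} in this application is $I(C_j(\fraksl_2))\subseteq I(C_j(\N))$ together with $J_1\subseteq\langle x_1,y_1,z_1\rangle$ (after writing $I(C_{j+1}(\N))=\sqrt{I(C_j(\N))+J_1}$), rather than the containment $I(C_j(\N))\subseteq I(C_{j+1}(\N))$ you cite, though the latter is indeed trivially true and is one of the easy inputs in the lemma's proof.
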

\begin{proof}
Part (a) and (b) immediately follow from the decomposition
\[ C_{i,j}=C_{i+j}(\N)~\cup~ 0\times\ldots\times 0\times C_j(\fraksl_2). \]
Indeed, the inclusion $\supseteq$ is obvious. For the other inclusion, we consider $v=(v_1,\ldots,v_{i+j})\in C_{i,j}$. If $v_m\ne 0$ for some $1\le m\le i$, then it is distinguished. Hence by \cite[Corollary 35.2.7]{TY:2005} the centralizer of $v_m$ is in $\N$. This implies that all the $v_n$ with $i+1\le n\le i+j$ must be in $\N$ so that $v\in C_{i+j}(\N)$. Otherwise, we have $v_1=\cdots=v_i=0$; hence $v\in 0\times\ldots\times 0\times C_j(\fraksl_2)$. Also note from earlier that $\dim C_{i+j}(\N)=i+j+1$ and $\dim C_j(\fraksl_2)=j+2$.

(c) The mixed commuting variety $C_{i,j}$ is never normal for all $i,j\ge 1$ as it is reducible and the irreducible components overlap each other (they all share the origin). Recall that a variety is Cohen-Macaulay only if its irreducible components are equidimensional. In our case, this happens only when $i=1$. The remainder is to show that $C_{1,j}=C(\N,\fraksl_2,\ldots,\fraksl_2)$ is Cohen-Macaulay. Let 
\[ I(C_{j+1}(\N))=\sqrt{I(C_j(\N))+J_1} \]
where $J_1=\left< x_1^2+y_1z_1, x_1y_i-x_iy_1, x_1z_i-x_iz_1, y_1z_i-y_iz_1~ \mid~ i=2,\ldots,j+1\right>$, an ideal of $k[\fraksl_2^{j+1}]$. Note also by Lemma \ref{lem of radical ideal} that
\[ I(0\times C_j(\fraksl_2))=\left<x_1,y_1,z_1\right>+I(C_j(\fraksl_2)) \]
and that $I(C_j(\fraksl_2))\subseteq I(C_j(\N)), J_1\subseteq \left<x_1,y_1,z_1\right>$. Hence, by Lemma \ref{radical-properties}, we have
\[ C_{j+1}(\N)\cap^* 0\times C_j(\fraksl_2)=C_{j+1}(\N)\cap 0\times C_j(\fraksl_2)=0\times C_j(\N). \]
Observe, in addition, that each variety is Cohen-Macaulay. So Lemma \ref{Eisenbud} implies that $C_{1,j}$ is Cohen-Macaulay.
\end{proof}

\begin{remark}
One can see that mixed commuting varieties are the generalization of commuting varieties. Strategies in studying the latter objects can be applied to that of the first objects. However, we have not seen any work in the literature related to this concept. Our results in this section show that mixed commuting varieties are usually not irreducible, hence fail to be normal or Cohen-Macaulay. The reducibility also causes big dificulties in computing their dimensions. If $V_1$ and $V_n$ are the minimal and maximal element of the family $V_1,\ldots,V_n$ ordered by inclusion, then for each $r\ge 1$ one can easily see the following
\[ \dim C_r(V_1)\le \dim C(V_1,\ldots,V_n)\le \dim C_r(V_n). \]
These bounds are rough and depend on the dimensions of the commuting varieties. So new methods are required to investigate a mixed commuting variety over various closed sets in a higher rank Lie algebra.
\end{remark}

\section{Commuting Variety of 3 by 3 matrices}\label{C_r(N(sl_3))}\label{comvar of 3 by 3}
We turn our assumption to $G=SL_3$ and $\g=\fraksl_3$ defined over an algebraically closed field $k$. Recall that $\N$ denotes the nilpotent cone of $\g$. Not much is known about commuting varieties in this case. In the present section, we only focus on the calculations on the nilpotent commuting variety. In particular, we apply determinantal theory to study the smaller variety $C_r(\fraku)$, and then use the moment morphism to obtain results on $C_r(\N)$. We next show that for each $r\ge 1$ all of the singular points of $C_r(\N)$ are in the commuting variety $C_r(\overline{\calO_{\sub}})$.

\subsection{Irreducibility} First, we identify $\fraku^r$ with the affine space as follows.
\[ \fraku^r=\left\{ \left( \begin{array}{ccc} 0 & 0  & 0\\ x_1 & 0 & 0 \\ y_1 & z_1 & 0 \end{array} \right), \ldots , \left( \begin{array}{ccc} 0 & 0  & 0\\ x_r & 0 & 0 \\ y_r & z_r & 0 \end{array} \right)~|~x_i\ ,\ y_i\ ,\ z_i\in k, 1\le i\le r \right\}. \]
For each pair of elements in $\fraku$, the commutator equations are $x_iz_j-x_jz_i$ with $1\le i\le j\le r$. It follows that
\[ k[C_r(\fraku)]=\frac{k[x_1 , y_1 , z_1 ,\ldots, x_r , y_r , z_r]}{\sqrt{\left<x_iz_j-x_jz_i~|\ 1\le i\le j\le r\right>}}. \]
\begin{proposition}\label{commuting u(sl_3)}
For each $r\ge 1$, the variety $C_r(\fraku)$ is
\begin{enumalph}
\item irreducible of dimension $2r+1$,
\item singular at the point 
\[ X=\left( \left( \begin{array}{ccc} 0 & 0  & 0\\ 0 & 0 & 0 \\ y_1 & 0 & 0 \end{array} \right),\cdots,\left( \begin{array}{ccc} 0 & 0  & 0\\ 0 & 0 & 0 \\ y_r & 0 & 0 \end{array} \right) \right), \] 
with $y_1,\ldots,y_r\in k$,
\item Cohen-Macaulay and normal.
\end{enumalph}
\end{proposition}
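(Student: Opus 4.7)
The plan is to exploit the fact that the variables $y_1,\ldots,y_r$ do not appear in any of the defining commutator relations, so the coordinate ring decomposes as a tensor product with a polynomial ring. Writing out the commutators of the pairs in $\fraku$ I find only the relations $x_iz_j-x_jz_i$, which are precisely the $2$-minors of the $2\times r$ matrix
\[
\mathcal{X}=\begin{pmatrix} x_1 & x_2 & \cdots & x_r \\ z_1 & z_2 & \cdots & z_r \end{pmatrix}.
\]
Thus I would identify
\[
k[C_r(\fraku)] \;\cong\; R_2(\mathcal{X})\otimes_k k[y_1,\ldots,y_r],
\]
reducing everything to properties of the determinantal ring $R_2(\mathcal{X})$.

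With that identification in place, part (a) and part (c) follow almost immediately from Proposition \ref{determinantal rings} with $t=2$, $m=2$, $n=r$: the ring $R_2(\mathcal{X})$ is a Cohen-Macaulay, normal integral domain of Krull dimension $(t-1)(m+n-t+1)=r+1$. Since irreducibility, normality, and Cohen-Macaulayness are all preserved by taking the product with an affine space, I conclude that $C_r(\fraku)$ is irreducible, normal, and Cohen-Macaulay of dimension $(r+1)+r=2r+1$, which handles (a) and (c).

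For part (b), I would apply Proposition \ref{trivial singular} to the determinantal factor. The ideal $I_2(\mathcal{X})$ is generated by a non-empty set of homogeneous polynomials of degree $2$ (valid for $r\geq 2$, which is the interesting range), so the origin of $\spec R_2(\mathcal{X})$ is singular. Under the product decomposition, singularities of the factor $\spec R_2(\mathcal{X})$ lift to singularities along $\{0\}\times \A^r$, and this locus corresponds exactly to the $r$-tuples $X$ described in the statement, namely those with $x_i=z_i=0$ and $y_i$ arbitrary.

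I do not anticipate a serious obstacle; the main point is the observation that the $y_i$ are free variables, after which the theory of determinantal rings handles all three claims simultaneously. The only minor subtlety is to check that Cohen-Macaulayness, normality, and irreducibility genuinely pass through the tensor product with $k[y_1,\ldots,y_r]$, which is standard (a polynomial extension of a Cohen-Macaulay normal domain is again a Cohen-Macaulay normal domain).
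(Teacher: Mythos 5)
Your proof of parts (a) and (c) is essentially identical to the paper's: the same product decomposition $k[C_r(\fraku)]\cong k[y_1,\dots,y_r]\otimes R_2(\mathcal{X})$ is used, and Cohen--Macaulayness, normality, irreducibility, and the dimension count all come from the determinantal ring theory (Proposition \ref{determinantal rings}) plus standard preservation under polynomial extension.

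For part (b), however, you take a genuinely different route from the paper, and the two approaches prove slightly different things. You apply Proposition \ref{trivial singular} to see that the origin of $\spec R_2(\mathcal{X})$ is a singular point (for $r\ge 2$), then lift via the standard fact $\mathrm{Sing}(Y\times \A^n)=\mathrm{Sing}(Y)\times\A^n$. This directly establishes that every point of the form $X$ is singular, which is exactly what the statement asserts, and it is shorter than what the paper does. The paper instead introduces an action of $H=GL_r\times GL_2$ on the determinantal factor and shows that every \emph{nonzero} element of $\spec R_2(\mathcal{X})$ lies in a smooth $H$-orbit of full dimension $r+1$, hence is nonsingular; this proves the \emph{complementary} containment, namely that the singular locus is contained in $\{0\}\times\A^r$. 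Strictly speaking, the paper's argument alone does not show that the points $X$ are singular, and your argument alone does not show they are the only singular points; the two arguments are complementary and together pin down the singular locus exactly. You also correctly flagged that the claim is vacuous for $r=1$, where $C_1(\fraku)=\fraku$ is smooth everywhere and there are no degree-$2$ minors --- a case the paper's statement glosses over as well. All told your proposal is correct for the statement as written and is in fact a cleaner path to what is literally claimed.
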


\begin{proof}
\begin{enumalph}
We first consider the following isomorphism of rings
\begin{equation}\label{iso}
k[C_r(\fraku)] \cong k[y_1,\ldots,y_r]\otimes\frac{k[x_1,z_1,\ldots,x_r,z_r]}{\sqrt{\left<x_iz_j-x_jz_i~|\ 1\le i\le j\le r\right>}}.
\end{equation}
Let $V$ be the variety associated to the second ring in the tensor product. Then parts (a) and (c) follow if we are able to show that $V$ is irreducible, Cohen-Macaulay and normal. Indeed, these properties can be obtained from determinantal varieties as we argued earlier. 

Consider the matrix
\[\mathcal{X}=\left( \begin{array}{cccc} x_1 & x_2 & \cdots & x_r\\ z_1 & z_2 & \cdots & z_r \end{array} \right). \]
It is easy to see that 
\[ k[V]\cong \frac{k(\mathcal{X})}{I_2(\mathcal{X})}\]
where $I_2(\mathcal{X})$ is the ideal generated by 2-minors of $\mathcal{X}$. Hence by Theorem \ref{determinantal rings}, $k[V]$ is a Cohen-Macaulay, normal domain of Krull dimension $r+1$. As tensoring with a polynomial ring does not change the properties except the dimension, we have $k[C_r(\fraku)]$ is a Cohen-Macaulay, normal domain of Krull dimension $2r+1$ \cite{BK:2002}. This proves parts (a) and (c).

(b) Note that the group $H=GL_r\times GL_2$ acts on $V$ under the following action:
\[ \left( \left( \begin{array}{ccc} a_{11} & \cdots & a_{1r} \\
\vdots & \ddots & \vdots \\
a_{r1} & \cdots & a_{rr} \end{array} \right), \left( \begin{array}{cc} a &  b \\
c & d \end{array} \right)\right) \times\left[ \left( \begin{array}{ccc} 0 & 0 & 0 \\
x_i & 0 & 0 \\
0 & z_i & 0 \end{array} \right)\right]_{i=1}^r \mapsto \left( \sum_{i=1}^ra_{1i}v_i\ ,\ldots,\ \sum_{i=1}^ra_{1i}v_j \right) \]
where $v_i=\left( \begin{array}{ccc} 0 & 0 & 0 \\
ax_i+bz_i & 0 & 0 \\
0 & cx_i+dz_i & 0 \end{array} \right)$ for each $1\le i\le r$. Now consider a nonzero element $v$ in $V$. Without loss of generality, we can assume that the entry $x_1\ne 0$. Then $v$ belongs to the open set $W=V\cap V(x_1\ne 0)$. On the other hand, it is not hard to see that 
\[ W=H\cdot \left[ \left( \begin{array}{ccc} 0 & 0 & 0 \\
1 & 0 & 0 \\
0 & 0 & 0 \end{array} \right),\ldots ,\left( \begin{array}{ccc} 0 & 0 & 0 \\
1 & 0 & 0 \\
0 & 0 & 0 \end{array} \right)\right] \]
which is a smooth orbit of dimension $r+1$. It follows that $v$ is non-singular. From the isomorphism \eqref{iso}, we have the set of all singular points of the right-hand side is $V(k[y_1,\ldots,y_r])\times\{0\}$. This determines the singular locus as desired.
\end{enumalph}
\end{proof}
Now we use properties of $C_r(\fraku)$ as ingredients to study the irreducibility and dimension of $C_r(\N)$.
\begin{theorem}
The variety $C_r(\N)$ is an irreducible variety of dimension $2r+4$. 
\end{theorem}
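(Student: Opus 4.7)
The plan is to derive both claims directly from machinery already in place. For irreducibility, I would simply invoke Theorem \ref{C(u)-and-C(N)}: Proposition \ref{commuting u(sl_3)}(a) established that $C_r(\fraku)$ is irreducible, and the surjectivity of the moment morphism $m:G\times^B C_r(\fraku)\to C_r(\N)$ (Theorem \ref{surjectivity of moment}) together with the irreducibility of $G$ forces $C_r(\N)$ to be irreducible.

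For the dimension, I would combine a fiber-bundle computation with the birationality of $m$. The projection $\pi:G\times^B C_r(\fraku)\to G/B$ is a locally trivial fiber bundle with fiber $C_r(\fraku)$, so
\[ \dim\bigl(G\times^B C_r(\fraku)\bigr) \;=\; \dim(G/B) + \dim C_r(\fraku). \]
For $G=SL_3$ one has $\dim G/B = 3$, and Proposition \ref{commuting u(sl_3)}(a) gives $\dim C_r(\fraku)=2r+1$, so the total space has dimension $2r+4$. Proposition \ref{birational moment} ensures that $m$ is birational between irreducible varieties, and therefore $\dim C_r(\N) = \dim\bigl(G\times^B C_r(\fraku)\bigr) = 2r+4$.

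As a sanity check I would verify this against the open piece appearing in the proof of Proposition \ref{birational moment}: for a regular nilpotent element $v_{\reg}\in\fraksl_3$ the centralizer $z_{\reg}$ is $2$-dimensional, and the preimage $p_1^{-1}(\calO_{\reg}) = G\cdot(v_{\reg},z_{\reg},\ldots,z_{\reg})$ has dimension $\dim\calO_{\reg} + (r-1)\dim z_{\reg} = 6 + 2(r-1) = 2r+4$, matching the answer and confirming that $m$ is an isomorphism over this open set. There is no serious obstacle here; the hard work was already carried out in Section \ref{background} (surjectivity and birationality of $m$) and in Proposition \ref{commuting u(sl_3)} (the determinantal-ring analysis of $C_r(\fraku)$), and the present theorem is the immediate geometric payoff of combining those inputs.
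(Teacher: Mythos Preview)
Your proposal is correct and follows essentially the same route as the paper: irreducibility via Theorem \ref{C(u)-and-C(N)} (using Proposition \ref{commuting u(sl_3)}(a)), and the dimension count via the fiber-bundle formula $\dim(G\times^B C_r(\fraku))=\dim G/B+\dim C_r(\fraku)=3+(2r+1)$ combined with the birationality of $m$ from Proposition \ref{birational moment}. The additional sanity check on $p_1^{-1}(\calO_{\reg})$ is not in the paper's proof but is a harmless confirmation.
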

\begin{proof}
Irreducibility follows from Theorem \ref{C(u)-and-C(N)}. Then, the birational properness of the moment morphism 
\[ m:G\times^BC_r(\fraku)\to C_r(\N)\]
implies that 
\[ \dim C_r(\N)=\dim G\times^BC_r(\fraku)=\dim C_r(\fraku)+\dim G/B=2r+4. \]
\end{proof}

\subsection{Singularities}
Serre's Criterion states that a variety $V$ is normal if and only if the set of singularities has codimension $\ge 2$ and the depth of $V$ at every point is $\ge\min(2,\dim V)$. This makes the task of determining the dimension of the singular locus of a variety necessary in order to verify normality. 

Note that the problem on the singular locus of ordinary commuting varieties was studied by Popov \cite{Po:2008}. He showed that the codimension of singularities for $C_2(\g)$ is greater than or equal to 2 for an arbitrary reductive Lie algebra $\g$. In other words, the variety $C_2(\g)$ holds the necessary condition for being normal. This author has not seen any analogous work in the literature for arbitrary commuting varieties. 

We prove in this subsection that the set of all singularities for $C_r(\N)$ has codimension $\ge 2$. Let $\alpha,\beta$ be the simple roots of the underlying root system $\Phi$ of $G$. Then we have the set of positive roots $\Phi^+=\{\alpha,\beta,\alpha+\beta\}$. Recall that $z_{\reg}$ and $z_{\sub}$ are respectively the centralizers of $v_{\reg}$ and $v_{\sub}$ in $\g$. We have shown in Proposition \ref{commuting u(sl_3)}(b) that the vector space $\fraku_{-\alpha-\beta}^r$ includes all singularities of $C_r(\fraku)$. Before locating the singularities of $C_r(\N)$, we need some lemmas.

\begin{lemma}\label{lemma-of-singularities}
Suppose $r\ge 2$. For each  $1\le i\le r$, the subset $V[i]=G\cdot(z_{\reg}\ ,\ldots,\ v_{\reg}\ ,\ldots,\ z_{\reg})$ with $v_{\reg}$ in the $i$-th of $C_r(\N)$ is a smooth open subvariety of dimension $2r+4$.
\end{lemma}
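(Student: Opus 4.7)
The strategy splits into two parts: first, identify $V[i]$ as the preimage of the regular orbit under the $i$-th projection to establish openness; second, realize $V[i]$ as an associated fiber bundle to deduce smoothness and compute the dimension.

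For openness, consider the projection $p_i : C_r(\N) \to \N$ onto the $i$-th factor. Since $\calO_{\reg}$ is the dense open orbit in $\N$, the preimage $p_i^{-1}(\calO_{\reg})$ is open in $C_r(\N)$. I claim $V[i] = p_i^{-1}(\calO_{\reg})$. The inclusion $V[i] \subseteq p_i^{-1}(\calO_{\reg})$ is immediate, and for the reverse inclusion, given $(u_1,\ldots,u_r) \in p_i^{-1}(\calO_{\reg})$, write $u_i = g \cdot v_{\reg}$; then for each $j \neq i$, the element $g^{-1} \cdot u_j$ commutes with $v_{\reg}$, hence lies in $z_{\reg}$. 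A direct computation in $\fraksl_3$ shows that $z_{\reg}$ is spanned by $v_{\reg}$ and $v_{\reg}^2$, both nilpotent, so $z_{\reg} \subseteq \N$; consequently $g^{-1} \cdot (u_1,\ldots,u_r) \in (z_{\reg},\ldots,v_{\reg},\ldots,z_{\reg})$, yielding $(u_1,\ldots,u_r) \in V[i]$.

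For smoothness and dimension, set $X = \{(w_1,\ldots,w_{i-1},v_{\reg},w_{i+1},\ldots,w_r) : w_j \in z_{\reg}\}$, an affine subspace isomorphic to $z_{\reg}^{r-1} \cong \A^{2(r-1)}$. The centralizer $Z_G(v_{\reg})$ preserves $X$ since it fixes $v_{\reg}$ and acts on $z_{\reg}$ by conjugation. The natural morphism $\mu : G \times X \to V[i]$, $(g,x) \mapsto g \cdot x$, is surjective by the definition of $V[i]$, and a short verification shows its fibers coincide with the orbits of the free $Z_G(v_{\reg})$-action $h \cdot (g,x) = (gh^{-1}, h \cdot x)$. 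Thus $\mu$ descends to a bijective morphism $\bar\mu : G \times^{Z_G(v_{\reg})} X \to V[i]$, which I plan to upgrade to an isomorphism of varieties using the étale-local triviality of the principal bundle $G \to G/Z_G(v_{\reg})$, so that $V[i]$ is locally on the base of the form $U \times X$ for $U$ open in $\calO_{\reg}$. The associated bundle $G \times^{Z_G(v_{\reg})} X$ fibers smoothly over $G/Z_G(v_{\reg}) \cong \calO_{\reg}$ with smooth affine fiber $X$, so $V[i]$ is smooth of dimension $\dim \calO_{\reg} + 2(r-1) = 6 + 2(r-1) = 2r + 4$.

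The main anticipated technical point is passing from a bijective morphism to a variety isomorphism in the identification $\bar\mu$; this is where the étale-local triviality of the homogeneous space fibration is essential. Once the lemma is in hand, the open sets $V[i]$ for $1 \le i \le r$ jointly cover the locus in $C_r(\N)$ where at least one coordinate is regular, which, together with the already-established equality $\dim C_r(\N) = 2r+4$, will reduce the subsequent singularity codimension computation to an analysis of the complementary locus contained in $C_r(\overline{\calO_{\sub}})$.
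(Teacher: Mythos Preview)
Your argument is correct, and your openness step via the projection $p_i$ coincides with the paper's. The smoothness arguments differ genuinely: the paper introduces an auxiliary action of $G\times\mathcal{M}_{r-1,r}$ on $C_r(\N)$ (with $\mathcal{M}_{r-1,r}$ acting by linear recombination of the entries other than the $i$-th) and asserts that $V[i]$ is a single orbit, hence smooth; it then reads off $\dim V[i]=2r+4$ from the already-established irreducibility and dimension of $C_r(\N)$. Your route instead realises $V[i]$ as the associated bundle $G\times^{Z_G(v_{\reg})}z_{\reg}^{\,r-1}$ over $\calO_{\reg}$ and computes the dimension directly as $\dim\calO_{\reg}+2(r-1)$. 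The paper's orbit argument is shorter when it goes through, but requires some care to make the group structure on $\mathcal{M}_{r-1,r}$ and the transitivity claim precise; your fibre-bundle approach is more structural, is independent of the prior computation of $\dim C_r(\N)$, and the \'etale-local-triviality step you flag is in fact unnecessary here: the centraliser bundle $\{(u,v)\in\calO_{\reg}\times\g:[u,v]=0\}$ is the kernel of a constant-rank bundle map on the trivial bundle $\calO_{\reg}\times\g$, hence a Zariski-locally-trivial vector subbundle, so $V[i]$ is simply the $(r-1)$-fold fibre power of this bundle and smoothness follows without passing through the bijection $\bar\mu$.
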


\begin{proof}
As a linear combination of commuting nilpotents is also nilpotent, we define an action of the algebraic group $G\times\mathcal{M}_{r-1,r}$ on $C_r(\N)$ where $\mathcal{M}_{r-1,r}$ is the set of all $r-1\times r$-matrices and can be identified with $(\mathbb{G}^r)^{r-1}$:
\begin{align*}
\left(G\times \mathcal{M}_{r-1,r}\right)\times C_r(\N) &\rightarrow C_r(\N) \\
(g,(a_{ij}))\bullet(v_1,\ldots,v_r) &\longmapsto g\cdot(\sum_{j=1}^ra_{1j}v_j\ ,\ldots, v_i\ ,\ldots,\ \sum_{j=1}^ra_{r-1,j}v_{j}).
\end{align*}
for every element $(v_1,\ldots,v_r)\in C_r(\N)$. Now observe that $z_{\reg}$ is a vector space of dimension 2, choose $\{v_{\reg},w\}$ a basis of this space. It is easy to check that for $r\ge 2$ we have
\[ V[i]=(G\times\mathcal{M}_{r-1,r})\bullet(w\ ,\ 0\ ,\ldots,\ v_{\reg}\ ,\ldots,\ 0). \]
In other words, our original variety is an orbit under the bullet action. So % by \cite[Proposition 8.3]{Hum:1995}, 
it is smooth. 

Now for each $1\le i\le r$, consider the projection map from $C_r(\N)$ to the $i$-th factor, $p:C_r(\N)\to \N$. We then have
\[ V[i]=p^{-1}(\calO_{\reg})\]
which is an open subset in $C_r(\N)$. As the variety $C_r(\N)$ is irreducible, the dimension of $V[i]$ is the same as $\dim C_r(\N)$.
\end{proof}

\begin{lemma}\label{lemma-of-O_subreg}
The intersection of $z_{\sub}$ and $\overline{\calO_{\sub}}$ is exactly the union of $\fraku_{-\alpha}\times\fraku_{-\alpha-\beta}$ and $\fraku_{-\alpha}\times\fraku_{\beta}$. Hence, we have $\dim\ (z_{\sub}\cap\overline{\calO_{\sub}})=2$.
\end{lemma}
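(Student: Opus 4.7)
The plan is to fix a convenient representative of the subregular orbit, compute its centralizer in $\fraksl_3$ by hand, and then intersect with the explicit scheme $\{X^2=0\}$ cutting out $\overline{\calO_{\sub}}$. Concretely, I would take $v_{\sub} = E_{-\alpha} = E_{21}$, which has Jordan type $[2,1]$ and hence is subregular. Solving $[X,E_{21}] = 0$ for $X \in \fraksl_3$ directly (set $X = (x_{ij})$ and equate $XE_{21} = E_{21}X$) yields
\[ z_{\sub} \;=\; k\,H \;\oplus\; \fraku_{-\alpha} \;\oplus\; \fraku_{-\alpha-\beta} \;\oplus\; \fraku_{\beta}, \]
where $H = E_{11} + E_{22} - 2E_{33}$; this is $4$-dimensional, matching $\dim G - \dim \calO_{\sub} = 8 - 4$.

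Next, I would use the standard description $\overline{\calO_{\sub}} = \{X \in \fraksl_3 : X^2 = 0\}$, which holds because $\calO_{\sub}$ consists of rank-one nilpotents and a nilpotent $3\times 3$ matrix has rank at most one exactly when it squares to zero. Parametrizing a generic element of $z_{\sub}$ as $X = aH + b\,E_{-\alpha} + c\,E_{\beta} + d\,E_{-\alpha-\beta}$, a direct matrix computation gives
\[ X^2 \;=\; \begin{pmatrix} a^2 & 0 & 0 \\ 2ab + cd & a^2 & -ac \\ -ad & 0 & 4a^2 \end{pmatrix}. \]
The diagonal already forces $a = 0$, and then all remaining entries vanish except the $(2,1)$-entry, which reduces to $cd$. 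Thus $X^2 = 0$ is equivalent to $a = 0$ and $cd = 0$, splitting the intersection into the two linear pieces
\[ \{a=c=0\} = \fraku_{-\alpha} \oplus \fraku_{-\alpha-\beta}, \qquad \{a=d=0\} = \fraku_{-\alpha} \oplus \fraku_{\beta}. \]

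Finally, each piece is a $2$-dimensional subspace of $z_{\sub}$, and they overlap along the common line $\fraku_{-\alpha}$, so their union has dimension exactly $2$, yielding both assertions of the lemma. There is no substantive obstacle here beyond careful bookkeeping: once a concrete matrix model for $z_{\sub}$ is written down, the problem collapses to the single monomial identity $cd = 0$, and the two components are then read off directly.
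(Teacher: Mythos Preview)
Your proof is correct and follows essentially the same route as the paper: both fix $v_{\sub}=E_{21}$, write down the centralizer as the explicit $4$-parameter family of matrices, and then impose the condition for lying in $\overline{\calO_{\sub}}$ to obtain $a=0$ and a single quadratic relation $cd=0$ splitting the intersection into the two stated planes. The only cosmetic difference is that the paper reaches $a=0$ via $\det X=0$ and then invokes the rank-one description of $\overline{\calO_{\sub}}$, whereas you use the equivalent characterization $X^2=0$ directly; the computations and conclusions are identical.
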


\begin{proof}
It can be computed that $z_{\sub}$ consists of all matrices of the form
\[ \left( \begin{array}{ccc} x_1 & 0 & 0 \\ x_2 & x_1 & t_2 \\ x_3 & 0 & -2x_1 \end{array} \right) \]
where $x_1\ ,\ x_2\ ,\ x_3\ ,\ t_2$ are in $k$. As the determinant of a nilpotent matrix is always $0$, we obtain
\[ z_{\sub}\cap\N=\left\{\left( \begin{array}{ccc} 0 & 0 & 0 \\ x_2 & 0 & t_2 \\ x_3 & 0 & 0 \end{array} \right)~|~x_2,x_3,t_2\in k \right\}. \]
On the other hand, it is well-known that $\calO_{\sub}$ consists of all matrices of rank one and $\overline{\calO_{\sub}}=\calO_{\sub}\cup\{0\}$. Then
\begin{align*}
z_{\sub}\cap\overline{\calO_{\sub}} &= \left\{ \left(\begin{array}{ccc} 0 & 0 & 0 \\ x_2 & 0 & 0 \\ x_3 & 0 & 0 \end{array} \right)\right\}\cup \left\{ \left(\begin{array}{ccc} 0 & 0 & 0 \\ x_2 & 0 & t_2 \\ 0 & 0 & 0 \end{array} \right)\right\} \\
&=\fraku_{-\alpha}\times\fraku_{-\alpha-\beta}\cup\fraku_{-\alpha}\times\fraku_{\beta}.
\end{align*}
It immediately follows that $\dim\ (z_{\sub}\cap\overline{\calO_{\sub}})=2.$
\end{proof}

For each $r\ge 1$, let $C^{\rm{sing}}_r$ be the singular locus of $C_r(\N)$. The following theorem determines the location of $C_r^{\rm{sing}}$.

\begin{theorem}\label{codim of sing}
For each $r\ge 1$, we have $C_r^{\rm{sing}}\subseteq C_r(\overline{\calO_{\sub}})$. Moreover, $\codim C_r^{\rm{sing}}\ge 2$.
\end{theorem}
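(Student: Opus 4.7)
The plan is to prove the two claims in sequence.

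For the inclusion $C_r^{\rm{sing}} \subseteq C_r(\overline{\calO_{\sub}})$, I would cover the complement by smooth open charts. For each $1 \le i \le r$, let $p_i : C_r(\N) \to \N$ denote projection onto the $i$-th factor; by Lemma \ref{lemma-of-singularities}, the open subset $V[i] = p_i^{-1}(\calO_{\reg})$ is smooth of the full dimension $2r+4$. Hence $U := \bigcup_{i=1}^r V[i]$ is an open smooth subvariety of $C_r(\N)$, and its complement consists precisely of tuples every entry of which lies in $\N \setminus \calO_{\reg} = \overline{\calO_{\sub}}$, i.e.\ $C_r(\overline{\calO_{\sub}})$. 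Consequently every singular point of $C_r(\N)$ lies in $C_r(\overline{\calO_{\sub}})$.

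For the codimension bound, it suffices (since $\dim C_r(\N) = 2r+4$) to prove $\dim C_r(\overline{\calO_{\sub}}) \le 2r+2$, which I would do by induction on $r$. The base case $r = 1$ is $\dim \overline{\calO_{\sub}} = 4$. For $r \ge 2$, consider the projection $q : C_r(\overline{\calO_{\sub}}) \to \overline{\calO_{\sub}} = \calO_{\sub} \cup \{0\}$ onto the first factor. The closed fiber $q^{-1}(0)$ is isomorphic to $C_{r-1}(\overline{\calO_{\sub}})$, of dimension at most $2r$ by induction. By $G$-equivariance, every fiber of $q$ over $\calO_{\sub}$ is isomorphic to $q^{-1}(v_{\sub})$, which is the variety of commuting $(r-1)$-tuples in $z_{\sub} \cap \overline{\calO_{\sub}}$. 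By Lemma \ref{lemma-of-O_subreg} this intersection equals $A_1 \cup A_2$, where $A_1 := \fraku_{-\alpha} \oplus \fraku_{-\alpha-\beta}$ and $A_2 := \fraku_{-\alpha} \oplus \fraku_{\beta}$ are $2$-dimensional abelian subalgebras meeting in the line $\fraku_{-\alpha}$. A direct root bracket computation in $\fraksl_3$ gives $[\fraku_{-\alpha-\beta}, \fraku_{\beta}] = \fraku_{-\alpha} \neq 0$; I argue (see below) that this forces the set of commuting $(r-1)$-tuples in $A_1 \cup A_2$ to be exactly $A_1^{r-1} \cup A_2^{r-1}$, of dimension $2(r-1)$. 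Therefore $\dim q^{-1}(\calO_{\sub}) = \dim \calO_{\sub} + 2(r-1) = 4 + 2(r-1) = 2r+2$, which dominates $\dim q^{-1}(0)$, closing the induction.

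The main obstacle is the fiber calculation at $v_{\sub}$, specifically the claim that a commuting tuple from $A_1 \cup A_2$ cannot mix nontrivially between the two components. If $v \in A_1 \setminus A_2$ and $w \in A_2 \setminus A_1$, then $v$ has nonzero $\fraku_{-\alpha-\beta}$-component and $w$ has nonzero $\fraku_{\beta}$-component (else they would already lie in the intersection $\fraku_{-\alpha}$), and the other bracket terms vanish because $-2\alpha - \beta$ and $-\alpha + \beta$ are not roots of $\fraksl_3$; hence $[v,w]$ is a nonzero multiple of $X_{-\alpha}$, contradicting commutativity. This rules out the mixed locus and reduces each fiber to a union of two affine $2(r-1)$-spaces, after which the codimension inequality follows immediately.
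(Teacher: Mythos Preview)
Your proof is correct and follows essentially the same route as the paper. Both arguments establish the inclusion $C_r^{\rm sing}\subseteq C_r(\overline{\calO_{\sub}})$ via Lemma~\ref{lemma-of-singularities} by covering the complement with the smooth open sets $V[i]=p_i^{-1}(\calO_{\reg})$, and both compute $\dim C_r(\overline{\calO_{\sub}})=2r+2$ by using Lemma~\ref{lemma-of-O_subreg} together with the observation that commuting tuples in $A_1\cup A_2$ must lie entirely in $A_1^{r-1}$ or $A_2^{r-1}$. The only cosmetic difference is that you package the dimension count as an induction on $r$ (handling the zero fiber $q^{-1}(0)\cong C_{r-1}(\overline{\calO_{\sub}})$ separately), whereas the paper writes the direct decomposition $C_r(\overline{\calO_{\sub}})=\overline{G\cdot(v_{\sub},V_1^{\,r-1})}\cup\overline{G\cdot(v_{\sub},V_2^{\,r-1})}$ and reads off the dimension from the theorem on fibers; the underlying calculation is the same.
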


\begin{proof}
It is obvious that our result is true for $r=1$. Assume now that $r\ge 2$. It suffices to prove that any element in the complement of $C_r(\overline{\calO_{\sub}})$ in $C_r(\N)$ is smooth. Let $V=C_r(\overline{\calO_{\sub}})$ and suppose $w\in C_r(\N)\backslash V$. Say $w=(w_1,\ldots,w_r)$ with some $w_{n}\notin\overline{\calO_{\sub}}$, i.e., $w_{n}$ is a regular element. Consider the projection onto the $n$-th factor $p_n:C_r(\N)\rightarrow\N$. As $G\cdot w_{n}=\calO_{\reg}$ is open in $\N$, the preimage $p_n^{-1}(G\cdot w_{n})$ is an open set of $C_r(\N)$. Note that 
\[ p_n^{-1}(G\cdot w_{n})=G\cdot p_n^{-1}(w_{n})=G\cdot(z_{\reg}\ ,\ldots,\ v_{\reg}\ ,\ldots,\ z_{\reg}).\] 
By Lemma \ref{lemma-of-singularities}, $w$ is non-singular. Therefore, $V$ contains all the singularities of $C_r(\N)$.

We are now computing the dimension of $\dim C_r(\overline{\calO_{\sub}})$. It is observed for each $r\ge 1$ that
\[ C_r(\overline{\calO_{\sub}})=\overline{G\cdot(v_{\sub},C_{r-1}(z_{\sub}\cap\overline{\calO_{\sub}}))}. \]
Now let $V_1=\fraku_{\alpha}\times\fraku_{\alpha+\beta}$ and $V_2=\fraku_{\alpha}\times\fraku_{-\beta}$. By the preceding lemma, we have $z_{\sub}\cap\overline{\calO_{\sub}}=V_1\cup V_2$. Also note that for $u, v\in V_1\cup V_2$ we have
\[ [u,v]=0\quad\Leftrightarrow\quad u,v\in V_1\quad\text{or}\quad u,v\in V_2.\]
It follows that $C_{r-1}(z_{\sub}\cap\overline{\calO_{\sub}})=V_1^{r-1}\cup V_2^{r-1}$. So we obtain 
\begin{equation*}\label{C_rO_sub}
C_r(\overline{\calO_{\sub}})=\overline{G\cdot(v_{\sub}\ ,V_1\ ,\ldots,\ V_1)}\cup\overline{G\cdot(v_{\sub}\ ,\ V_2\ ,\ldots,\ V_2)}.
\end{equation*}
This implies that $C_r(\overline{\calO_{\sub}})$ is reducible. Using theorem of dimension on fibers, we can further compute that $\overline{G\cdot(v_{\sub},\ V_j\ ,\ldots,\ V_j)}$ has dimension $2r+2$ for each $j=1,2$. Thus $\dim C_r(\overline{\calO_{\sub}})= 2r+2$ so that $\codim C_r(\overline{\calO_{\sub}})=2$. 
\end{proof}

\section*{Acknowledgments}

This paper is based on part of the author's Ph.D. thesis. The author gratefully acknowledges the guidance of his thesis advisor Daniel K. Nakano. Thanks to Christopher Drupieski for his useful comments. We also thank William Graham for discussions about Cohen-Macaulay rings. Finally, we are grateful to Alexander Premet and Robert Guralnick for the information about ordinary commuting varieties.
%\newpage

\section{Appendix}
We verify in this section the condition of Theorem \ref{Hochster}(b) for $I$ and $J$ are in the context of Proposition \ref{radical system}.

\subsection{Case 1}\label{checking 1} We check the following
\begin{align*}
(1)~~~ \left(y_{m+1}+z_{m+1}\right)x_j \in I, \\
(2)~~~ \left(y_{m+1}+z_{m+1}\right)y_j \in I, \\
(3)~~~ \left(y_{m+1}+z_{m+1}\right)z_j \in I.
\end{align*}
For the first one, we consider for each $1\le j\le m$,
\begin{align*}
(y_{m+1}+z_{m+1})x_j + I &=y_{m+1}x_j+z_{m+1}x_j + I \\
&=x_{m+1}y_j+x_{m+1}z_j + I \\
&= x_{m+1}\left<y_j+z_j\right> + I \\
&= I 
\end{align*}
where the second identity is provided by $x_{m+1}y_j-x_jy_{m+1}\ ,\ x_{m+1}z_j-x_jz_{m+1}\in I_r\subseteq I_m$; and the last identity is provided by $y_j+z_j\in I$. 

The same technique is applied for showing (2) and (3) as following: 
\begin{align*}
(y_{m+1}+z_{m+1})y_j + I &=y_{m+1}y_j+z_{m+1}y_j + I \\
&=y_{m+1}y_j+y_{m+1}z_j + I \\
&= y_{m+1}(y_j+z_j) + I \\
&= I, \\ 
(y_{m+1}+z_{m+1})z_j + I &=y_{m+1}z_j+z_{m+1}z_j + I \\
&=z_{m+1}y_j+z_{m+1}z_j + I \\
&= z_{m+1}(y_j+z_j) + I \\
&= I. 
\end{align*}

\subsection{Case 2}\label{checking 2} We need to check the following
\begin{align*}
(1) &~~~ \left(x_{n+1}+y_{n+1}\right)x_j \in I, \\
(2) &~~~ \left(x_{n+1}+y_{n+1}\right)y_j \in I, \\
(3) &~~~ \left(x_{n+1}+y_{n+1}\right)z_j \in I, \\
(4) &~~~\left(x_{n+1}+y_{n+1}\right)(x_{h}-y_{h}) \in I,
\end{align*}
for all $1\le j\le n$ and $n+1\le h\le r$. Verifying (1), (2), and (3) is similar to our work in Case 1. Lastly, we look at
\begin{align*}
(x_{n+1}+y_{n+1})(x_{h}-y_{h}) + I &= x_{n+1}x_h+y_{n+1}x_{h}-x_{n+1}y_{h} - y_{n+1}y_h + I \\
&= x_{n+1}x_h-y_{n+1}y_h + I \\
&= x_{n+1}x_h+y_{n+1}z_h + I. 
\end{align*}
Now in order to complete our verification, we will show that $x_{n+1}x_h+y_{n+1}z_h\in I_r$. Indeed, we have
\begin{align*}
(x_{n+1}x_h+y_{n+1}z_h)^2+I_r &=x_{n+1}^2x_h^2+2x_{n+1}x_hy_{n+1}z_h+y_{n+1}^2z_h^2 +I_r\\
&=-x_{n+1}^2y_hz_h+2x_{n+1}^2y_hz_h+y_{n+1}^2z_h^2 +I_r \\
&=x_{n+1}^2y_hz_h+y_{n+1}^2z_h^2 +I_r\\
&=-y_{n+1}z_{n+1}y_hz_h+y_{n+1}^2z_h^2 +I_r\\
&=y_{n+1}z_h(-z_{n+1}y_h+y_{n+1}z_h)+I_r \\
&=I_r.
\end{align*}
As $I_r$ is radical, we obtain $x_{n+1}x_h+y_{n+1}z_h\in I_r$ as desired.

\providecommand{\bysame}{\leavevmode\hbox to3em{\hrulefill}\thinspace}

\end{document}